\documentclass[final,3p,11pt]{elsarticle}

\usepackage{amssymb}
\usepackage{amsmath}

\usepackage{amsmath}
\usepackage{amsfonts}
\usepackage{epsfig}
\usepackage{bigcenter}
\usepackage{cancel}
\usepackage{lineno}
\usepackage{subfloat}
\usepackage{subfig}
\usepackage{enumitem}
\usepackage{tikz}
\usepackage{algorithm}
\usepackage{algorithmic}
\usepackage{svg}
\usepackage{comment}

\usepackage{hyperref}
\usepackage{cleveref}
\crefname{equation}{}{}
\crefname{figure}{Fig.}{Figs.}
\crefname{appendix}{}{}
\crefname{table}{Tab.}{Tabs.}
\Crefname{ALC@unique}{Line}{Lines} 

\newcommand{\R}{\mathbb{R}}

\newcommand{\uu}{{\bf u}}

\def\bxi{{\boldsymbol \xi}}

\newcommand{\T}{\mathrm{T}}
\newcommand{\p}{\mathrm{p}}
\newcommand{\s}{\mathrm{s}}

\newcommand{\HO}{\scalebox{0.8}{$\scriptstyle\rm HO$}}
\newcommand{\LO}{\scalebox{0.8}{$\scriptstyle\rm LO$}}
\newcommand{\CFL}{\scalebox{0.8}{${\rm CFL}$}}

\newcommand{\TOL}{\scalebox{0.75}{$\rm TOL$}}
\newcommand{\red}[1]{\textcolor{red}{#1}}

\def\XXint#1#2#3{{\setbox0=\hbox{$#1{#2#3}{\int}$}
     \vcenter{\hbox{$#2#3$}}\kern-.5\wd0}}

\DeclareMathAlphabet\mathbfcal{OMS}{cmsy}{b}{n}
\DeclareMathOperator{\diag}{diag}
\DeclareMathOperator*{\essinf}{ess\,inf}
\DeclareMathOperator*{\esssup}{ess\,sup}

\newcommand{\D}{\mathcal{D}}

\newtheorem{corollary}{Corollary}[section]
\newtheorem{proposition}{Proposition}[section]

\newdefinition{remark}{Remark}[section]
\newdefinition{example}{Example}[section]
\newproof{proof}{Proof}

\begin{document}

\begin{frontmatter}



\title{Invariant domain preserving limiting of time explicit and time implicit discretizations for systems of conservation laws}


\author{Bartolomeo Fanizza\fnref{label1}} 
\ead{bartolomeo.fanizza@onera.fr}

\author{Florent Renac\corref{cor1}\fnref{label2}} 
\ead{florent.renac@onera.fr}
\cortext[cor1]{Corresponding author. Tel.: +33 1 46 73 37 44}

\affiliation[label1]{organization={DAAA, ONERA, Institut Polytechnique de Paris},
            postcode={92320}, 
            city={Châtillon},
            country={France}}

\affiliation[label2]{organization={MONHADE, Equipe Inria-ONERA, DAAA, ONERA, Institut Polytechnique de Paris},
            postcode={92320}, 
            city={Châtillon},
            country={France}}

\begin{abstract}
This work concerns the design and analysis of a limiting technique that allows the preservation of invariant domains for high-order numerical approximations of nonlinear hyperbolic systems of conservation laws. The method can be applied to any conservative discretization method in space as well as to a wide range of explicit and implicit time integration schemes. The method limits the high-order solution around a low-order accurate solution that is known to preserve all the invariant domains. It generalizes the flux-corrected transport limiter [J. P. Boris and D. L. Book, J. Comput. Phys., 11, 1973; S. T. Zalesak, J. Comput. Phys., 31, 1979] to systems of conservation laws and relies on the limitation of antidiffusive fluxes, but defines the limiting coefficients so as to express the limited solution as a convex combination of invariant domain preserving quantities similarly to the convex limiting framework [Guermond et al., Comput. Methods Appl. Mech. Engrg., 347, 2019]. 
We give details on the derivation of this limiting technique and provide some illustration with finite volume or discontinuous Galerkin (DG) space discretizations associated to explicit or implicit Runge-Kutta methods as well as to time DG integrations. The limiter is applied iteratively to refine the limited solution around the high-order one, while preserving the invariant domains, and a heuristic is proposed to accelerate its convergence. Numerical experiments solving one- and two-dimensional problems involving scalar hyperbolic equations and the compressible Euler equations are presented to illustrate the properties of these schemes.
\end{abstract}



\begin{keyword}
hyperbolic conservation laws \sep convex invariant domains \sep limiting \sep finite volumes \sep discontinuous Galerkin method \sep time implicit discretization

\MSC 65M12 \sep 65M70 \sep 35L65
\end{keyword}

\end{frontmatter}



%
%
\section{Introduction}

Let $\Omega\subset\mathbb{R}^d$ be a bounded domain in $1\leq d\leq3$ space dimensions, we are interested in the numerical approximation of the following problem in conservative form:

\begin{subequations}\label{eq:hyp_sys_cons_laws}
\begin{align}
 \partial_t{\bf u} + \nabla\cdot{\bf f}({\bf u}) &= 0, \quad \mbox{in }\Omega\times(0,\infty), \label{eq:hyp_sys_cons_laws-a} \\
 {\bf u}(\cdot,0) &= {\bf u}_{0}(\cdot),\quad\mbox{in }\Omega. \label{eq:hyp_sys_cons_laws-b} 
\end{align}
\end{subequations}

Here ${\bf u}$ from $\Omega\times(0,\infty)$ into $\Omega^a\subset\mathbb{R}^{n_{eq}}$ is the vector of $n_{eq}$ conserved variables with initial value ${\bf u}_{0}$ in $L^\infty(\Omega)^{n_{eq}}\cap\Omega^a$. 
By $\Omega^a$ we denote the convex set of admissible states. The functions ${\bf f}=({\bf f}_1,\dots,{\bf f}_d)$ in ${\cal C}^1(\Omega^a,\mathbb{R}^{n_{eq}\times d})$ are the fluxes.

The solutions to \cref{eq:hyp_sys_cons_laws} may develop discontinuities in finite time and \cref{eq:hyp_sys_cons_laws} has to be understood in the sense of distributions where we look for weak solutions. Weak solutions are not necessarily unique and \cref{eq:hyp_sys_cons_laws} must be supplemented with further admissibility conditions to select the physical solution. We here focus on entropy inequalities of the form

\begin{equation}\label{eq:entropy_inequ}
 \partial_t\eta({\bf u}) + \nabla\cdot{\bf q}({\bf u}) \leq 0,
\end{equation}

\noindent in the sense of distributions, for all entropy -- entropy flux pairs $(\eta,{\bf q})$ with $\eta({\bf u})$ in ${\cal C}^2(\Omega^a,\mathbb{R})$ convex and ${\bf q}({\bf u})$ in ${\cal C}^1(\Omega^a,\mathbb{R}^d)$ satisfying the following compatibility conditions

\begin{equation}\label{eq:entropy_compatibility_cond}
 \boldsymbol{\eta}'({\bf u})^\top{\bf f}_i'({\bf u})={\bf q}_i'({\bf u})^\top, \quad 1\leq i\leq d, \quad \forall {\bf u} \in \Omega^a.
\end{equation}

The solution to \cref{eq:hyp_sys_cons_laws} is also known to preserve some convex invariant domains $\D\subseteq\Omega^a$ \cite{Chueh_elal_IDP_sys_77,serre_inv_reg_87,Hoff_idp_85}, i.e., if ${\bf u}_0$ lies in $\D\subset\Omega^a$ for almost all ${\bf x}$ in $\Omega$, then ${\bf u}$ remains in $\D$ for almost all $t$ and ${\bf x}$. From now on, by  $\D:=\cap_i\D_i$, with $\D_0=\Omega^a$, we denote the intersection of all the invariant domains we consider in this work. The objective of the present limiter is to ensure that the numerical scheme preserves $\D$ at the discrete level as will be detailed in \cref{sec:IDP_limiter}.

\begin{example}[Scalar conservation laws]\label{ex:scalar_eq}
In the scalar case, $\Omega^a\subseteq\R$, assuming a locally Lipschitz continuous flux ${\bf f}(\cdot)$, entropy solutions to \cref{eq:hyp_sys_cons_laws} are known to satisfy a maximum principle: $\D=[m,M]$ is an invariant domain with $m:=\essinf_{{\bf x}\in\Omega}u_0({\bf x})$ and $M:=\esssup_{{\bf x}\in\Omega}u_0({\bf x})$. The family of Kru\v{z}kov's entropies \cite{Krukov1970FIRSTOQ}, $\eta(u)=|u-K|$, ${\bf q}(u)=\text{sgn}(u-K)\left({\bf f}(u)-{\bf f}(K)\right)$, $K\in\mathbb{R}$, plays an important role in proving uniqueness of solutions to \cref{eq:hyp_sys_cons_laws}.
\end{example}

\begin{example}[Compressible Euler equations]\label{ex:Euler_eq}
For the compressible Euler equations for gas dynamics, the conservative variables and fluxes in \cref{eq:hyp_sys_cons_laws-a} are defined by
\begin{equation}
\label{eq:Euler}
\uu = \begin{pmatrix} \rho \\ \rho {\bf v} \\ \rho E \end{pmatrix},
\quad
{\bf f}(\uu) = \begin{pmatrix} \rho {\bf v}^\top \\ \rho {\bf v}{\bf v}^\top + \p {\bf I}_d \\ (\rho E + \p) {\bf v}^\top \end{pmatrix},
\end{equation} 

\noindent where $\rho$, ${\bf v}$, and $E$ denote the density, velocity vector, and specific total energy, respectively. The system is closed by defining the equation of state $\p=\p(\frac{1}{\rho},e)$ with $e=E-\frac{1}{2}{\bf v}\cdot{\bf v}$ the specific internal energy. Assuming that $\partial_\tau\p(\tau,e)<0$, the system is hyperbolic over the set of states $\Omega^a=\{\uu\in\mathbb{R}^{d+2}:\; \rho>0, {\bf v}\in\mathbb{R}^d, e>0\}$. The compressible Euler equations \cref{eq:hyp_sys_cons_laws-a,eq:Euler} possess the natural entropy -- entropy flux pair $\eta({\bf u})=-\rho \s$ and ${\bf q}({\bf u})= -\rho\s{\bf v}$ with $\s=\s(\frac{1}{\rho},e)$ a strictly convex function defined by the second law of thermodynamics $\T d\s=de+\p d\big(\tfrac{1}{\rho}\big)$ with $\T$ the temperature. $\D=\{\uu\in\Omega^a:\;\s(\uu)\geq\s_0\}$, with $\s_0$ in $\mathbb{R}$, is an invariant domain. 
\end{example}

We are here interested in the approximation of \cref{eq:hyp_sys_cons_laws} with a high-order (HO) discretization. These methods accurately capture smooth solutions, but may not satisfy the above properties of invariant domain preservation at the discrete level. In contrast, low-order (LO) numerical schemes such as the Godunov scheme \cite{Hoff_idp_85}, the Lax-Friedrichs and flux vector splitting schemes \cite{Frid_idp_LF_01}, or the guaranteed maximum speed graph viscosity scheme \cite{guermond_popov_GV_16} have been shown to be invariant domain preserving (IDP). The flux-corrected transport (FCT) limiter  \cite{Lohner_etal_FCT_87,BORIS_Book_FCT_73,zalesak1979fully,kuzmin_turek_FCT_02,renac_mpp_dgsem_nlsca_24,MRR_BEDGSEM_23} and the convex limiter \cite{Guermond_etal_IDP_conv_lim_19,Guermond_etal_IDP_conv_lim_18,PAZNER_idg_DGSEM20211} make use of these LO IDP schemes to limit the HO solution around the LO solution so as to preserve the invariant domains, while keeping conservation thanks to the introduction of antidiffusive fluxes. The FCT limiter can be applied to both explicit and implicit time stepping in contrast to convex limiting that is restricted to explicit time stepping. The FCT limiter is, however, designed to impose bounds on the components of the solution, while it is often necessary to impose bounds on nonlinear functions of the solution, such as the internal energy in the compressible Euler equations, or the void and mass fractions in multiphase and multispecies models. For this reason, the FCT limiter is mainly restricted to scalar equations. 
Monolithic convex limiting was then proposed that is based on adding a graph Laplacian to the HO scheme with a coefficient large enough to ensure that the so-called bar states preserve invariant domains \cite{KUZMIN_MCL_20}. The antidiffusive fluxes are then limited in order to reduce the effect of the graph viscosity, while preserving invariant domains and keeping conservation. It was then extended to time implicit integration of the compressible Euler equations in \cite{Moujaes_Kuzmin_MCL_impl_24} by exploiting the homogeneity property of the flux Jacobian. The introduction of bar states in finite volume (FV) and discontinuous Galerkin (DG) schemes was possible in \cite{Guermond_etal_IDP_conv_lim_19} through the use of the Rusanov numerical flux at interfaces. Even though HO approximations may not be affected by the choice of the numerical flux for smooth solutions or large scale oscillations around discontinuities \cite{qui_et-al06,renac15a}, this is no longer the case when small scale flow features are involved and the numerical flux has a strong effect on their resolution \cite{moura_etal_17,chapelier_etal_14}. 

In this work, we introduce a fairly general limiter that may be applied to any hyperbolic conservation laws with any conservative discretization method on general meshes and a large range of explicit and implicit time integration schemes. Our approach also relies on the limitation of antidiffusive fluxes as in the FCT framework, but the limiting coefficients are defined in order to express the limited solution as a convex combination of IDP quantities similarly to convex limiting. We depart from the latter approach in the definition of the limiting coefficients and the convex combinations that do not use bar states and thus allow the use of any consistent and conservative numerical flux in the HO scheme. The coefficients of the convex combination here define the limited solution as closely as possible to local convex combinations of the LO and HO solutions, while maintaining conservation. We embed the limiter in an iterative procedure \cite{Schar_iterative_FCT_96} that refines the limited solution as closely as possible to the HO solution, while preserving invariant domains. This iterative procedure also allows to recover positive HO solutions and we propose a heuristic to accelerate its convergence. We focus on finite volume (FV) and discontinuous Gakerkin (DG) schemes, as well as the DG spectral element method (DGSEM) as illustrative examples, but the present approach may be applied to any conservative discretization. The space discretizations may be associated to explicit and implicit Runge-Kutta methods, explicit and implicit multistep methods, as well as to time DG integrations among others. 

The paper is organized as follows. \Cref{sec:IDP_limiter} describes the present IDP limiter for a general class of conservative discretization schemes in space (\cref{sec:generic_discr_meth,sec:IDP_limiter_details}), while we propose an iterative algorithm (\cref{sec:iterative_limiter,sec:acc_iterative_limiter}) to refine the limited solution. The limiter is then applied to HO explicit and implicit time stepping including Runge-Kutta (RK) methods (\cref{sec:RK_schemes}), multistep methods (\cref{sec:multistep_schemes}) and DG time integration (\cref{sec:time_DG_discretization}). In \cref{sec:FV-DG_IDP_limiters}, we describe the derivation of the antidiffusive fluxes for FV schemes (\cref{sec:FV_schemes}), general DG schemes (\cref{sec:DG_schemes}), as well as for the DGSEM (\cref{sec:DGSEM_IDP_limiter}). The present limiter is assessed by numerical experiments in \cref{sec:num-xp} with FV and DG schemes associated to various explicit and implicit time discretizations for solving one-dimensional (1D) and two-dimensional (2D) problems involving scalar hyperbolic equations and the compressible Euler equations, while concluding remarks about this work are given in \cref{sec:conclusion}.

%
%
\section{Invariant domain limiting}\label{sec:IDP_limiter}

We here introduce the present limiter in the context of a generic scheme in space and restrict to a first-order time discretization without loss of generality. The application to high-order explicit and implicit schemes will be described in \cref{sec:time_discretization}, while examples of space discretization methods will be given in \cref{sec:FV-DG_IDP_limiters}. 

\subsection{Generic discretization method}\label{sec:generic_discr_meth}

The space domain is discretized with a shape-regular mesh $\Omega_h\subset\mathbb{R}^d$ consisting of nonoverlapping and nonempty open elements $\kappa$ and we assume that it forms a partition of $\Omega$. By $p+1\geq1$ we usually refer to the formal order of space accuracy of the scheme and $N_p\geq1$ denotes the number of DOFs per equation and per element $\kappa$ in $\Omega_h$. 

Using a method of lines, we consider conservative space discretization schemes for \cref{eq:hyp_sys_cons_laws-a} in the general following form

\begin{equation}\label{eq:generic-semi-discr-scheme}
  {\bf M}d_t{\bf u}_h + {\bf R}_h({\bf u}_h) = 0,
\end{equation}

\noindent where ${\bf u}_h$ denotes the global numerical solution represented by the general ansatz

\begin{equation}\label{eq:link-uh-DOFs}
  {\bf u}_h({\bf x},t) = \sum_{i=1}^{N_p}{\bf U}_\kappa^i(t)\phi_\kappa^i({\bf x}) \quad \forall {\bf x}\in\kappa\in\Omega_h, t \geq 0,
\end{equation}

\noindent where ${\bf U}_\kappa^i(t)\in\R^{n_{eq}}$ are the DOFs of the discrete problem to be solved. Canonical examples we will use in the numerical experiments of \cref{sec:num-xp} are the finite volume (FV) schemes where $N_p=1$ and $\phi_\kappa^1\equiv1_\kappa$ the indicator function of the cell $\kappa$; or the discontinuous Galerkin (DG) method where the $(\phi_\kappa^{1\leq i\leq N_p})$ are some basis functions spanning a given space of polynomials of degree $p\geq0$ restricted to the element $\kappa$ and $N_p$ is its dimension.

By ${\bf R}_h$ we denote the discretization of the space derivatives in \cref{eq:hyp_sys_cons_laws-a}, while ${\bf M}$ denotes the global mass matrix. To avoid technical complexity, we assume here that ${\bf M}$ is diagonal: ${\bf M}={\bf I}_{n_{eq}}\otimes\diag(M_\kappa^i:\;\kappa\in\Omega_h, 1\leq i\leq N_p)$ with ${\bf I}_{n_{eq}}$ the identity matrix of size $n_{eq}$. However, it is possible to  handle general mass matrices in the same way as similar limiters such as FCT and convex limiting \cite{kuzmin_turek_FCT_02,KUZMIN_FCT_09}.

The time is now discretized into time steps $\Delta t^{(n)}>0$ and we set $t^{(n+1)}=t^{(n)}+\Delta t^{(n)}$ for $n\geq0$. We restrict ourselves to a first-order time discretization in this section for the sake of clarity. Given a global numerical solution ${\bf u}_h^{(n)}\in\D$ at time $t^{(n)}$ that is assumed to preserve all invariant domains of the PDE, here we consider a high-order space discretization scheme in the general following form

\begin{equation}\label{eq:HO-generic-scheme}
 \frac{M_\kappa^i}{\Delta t^{(n)}}\big({\bf U}_{\kappa,\HO}^{i,n+1} - {\bf U}_\kappa^{i,n}\big) + {\bf R}_{\kappa,\HO}^i({\bf u}_{\HO}^{n+\theta}) = 0 \quad \forall \kappa\in\Omega_h, 1\leq i\leq N_p,
\end{equation}

\noindent where ${\bf u}_{\HO}^{n+\theta}$ denotes the HO solution at time $t^{n+\theta}$ with DOFs ${\bf U}_{\kappa,\HO}^{i,n+\theta}$, while $\theta=0$ corresponds to the explicit forward-Euler scheme and $\theta=1$ to the implicit backward-Euler scheme. In the explicit case $\theta=0$, we have ${\bf u}_{\HO}^{n}={\bf u}_h^{n}$. Here, $M_\kappa^i>0$ refers to the $i$th entry of the mass matrix in the element $\kappa$.

Scheme \cref{eq:HO-generic-scheme} is HO accurate in space, but may fail to preserve the invariant domains. Following the FCT \cite{kuzmin_turek_FCT_02,KUZMIN_FCT_09} and convex limiting \cite{Guermond_etal_IDP_conv_lim_18} frameworks, among others, we aim at limiting the HO solution around a LO IDP solution computed from the LO scheme

\begin{equation}\label{eq:LO-generic-scheme}
 \frac{M_\kappa^i}{\Delta t^{(n)}}\big({\bf U}_{\kappa,\LO}^{i,n+1} - {\bf U}_\kappa^{i,n}\big) + {\bf R}_{\kappa,\LO}^i({\bf u}_{\LO}^{n+\theta}) = 0 \quad \forall \kappa\in\Omega_h, 1\leq i\leq N_p,
\end{equation}

\noindent where again ${\bf u}_{\LO}^{n}={\bf u}_h^{n}$ for $\theta=0$. The comparison of the HO and LO schemes \cref{eq:HO-generic-scheme,eq:LO-generic-scheme} will allow us to introduce the antidiffusive fluxes in the next section that commonly constitute an essential aspect to keep conservation of the present limiter.


%
\subsection{Invariant domain preserving limiter}\label{sec:IDP_limiter_details}

\subsubsection{Antidiffusive fluxes}

Subtracting scheme \cref{eq:LO-generic-scheme} for the LO solution from \cref{eq:HO-generic-scheme} for the HO solution gives

\begin{align}\label{eq:HO-LO-generic-scheme}
 \frac{M_\kappa^i}{\Delta t^{(n)}}\big({\bf U}_{\kappa,\HO}^{i,n+1} - {\bf U}_{\kappa,\LO}^{i,n+1}\big) &= {\bf R}_{\kappa,\LO}^i({\bf u}_{\LO}^{n+\theta}) - {\bf R}_{\kappa,\HO}^i({\bf u}_{\HO}^{n+\theta}) \nonumber\\ &= \sum_{(\kappa',j)\in{\cal N}_\kappa^i} {\bf A}_{\kappa,\kappa'}^{i,j,n+\theta} \quad \forall \kappa\in\Omega_h, 1\leq i\leq N_p,
\end{align}

\noindent where ${\bf A}_{\kappa,\kappa'}^{i,j,n+\theta}$ correspond to the so-called antidiffusive fluxes \cite{zalesak1979fully,kuzmin_turek_FCT_02} and satisfy the conservation property: ${\bf A}_{\kappa,\kappa'}^{i,j,n+\theta}=-{\bf A}_{\kappa',\kappa}^{j,i,n+\theta}$ (hence ${\bf A}_{\kappa,\kappa}^{i,i,n+\theta}=0$). In the usual terminology, ${\cal N}_\kappa^i$ denotes the set of antidiffusive element contributions needed to recover the HO accuracy from the LO scheme \cite[Sec.~2]{kuzmin_turek_FCT_02}. This corresponds to the faces of the element $\kappa$ \cite{BORIS_Book_FCT_73,zalesak1979fully} in the original FCT framework, or to the stencil of the scheme in the convex limiting framework \cite{Guermond_etal_IDP_conv_lim_18}. The antidiffusive fluxes will be defined in \cref{sec:FV-DG_IDP_limiters}. 

The limited solution ${\bf u}_h^{n+1}$ is then defined by

\begin{equation}\label{eq:HO-LO-generic-limited-scheme}
 \frac{M_\kappa^i}{\Delta t^{(n)}}\big({\bf U}_{\kappa}^{i,n+1} - {\bf U}_{\kappa,\LO}^{i,n+1}\big) = \sum_{(\kappa',j)\in{\cal N}_\kappa^i} l_{\kappa,\kappa'}^{i,j}{\bf A}_{\kappa,\kappa'}^{i,j,n+\theta} \quad \forall \kappa\in\Omega_h, 1\leq i\leq N_p,
\end{equation}

\noindent where the limiting coefficients $0\leq l_{\kappa,\kappa'}^{i,j}\leq1$ should be defined to guarantee that ${\bf u}_h^{n+1}\in\D$. Note that setting all the $l_{\kappa,\kappa'}^{i,j}=1$ gives the HO solution, while $l_{\kappa,\kappa'}^{i,j}=0$ gives the LO solution. We further require $l_{\kappa,\kappa'}^{i,j}=l_{\kappa',\kappa}^{j,i}$ to maintain the global conservation of the limited scheme: 

\begin{equation*}
 \sum_{\kappa\in\Omega_h}\sum_{i=1}^{N_p}M_\kappa^i{\bf U}_{\kappa}^{i,n+1} = \sum_{\kappa\in\Omega_h}\sum_{i=1}^{N_p}M_\kappa^i{\bf U}_{\kappa,\HO}^{i,n+1} = \sum_{\kappa\in\Omega_h}\sum_{i=1}^{N_p}M_\kappa^i{\bf U}_{\kappa,\LO}^{i,n+1} = \sum_{\kappa\in\Omega_h}\sum_{i=1}^{N_p}M_\kappa^i{\bf U}_{\kappa}^{i,n}
\end{equation*}

\noindent in the case of compactly supported solutions.

\subsubsection{Limiting coefficients}\label{sec:limiting-coeffs}

The limiting coefficients $l_{\kappa,\kappa'}^{i,j}$ in \cref{eq:HO-LO-generic-limited-scheme} are defined as follows. Let us introduce coefficients $\alpha_{\kappa'}^j>0$ such that $\sum_{(\kappa',j)\in{\cal N}_\kappa^i}\alpha_{\kappa'}^j=1$, then rewrite \cref{eq:HO-LO-generic-limited-scheme} as

\begin{equation}\label{eq:generic-limited-scheme-cv-comb}
 {\bf U}_{\kappa}^{i,n+1} = {\bf U}_{\kappa,\LO}^{i,n+1} + \frac{\Delta t^{(n)}}{M_\kappa^i} \sum_{(\kappa',j)\in{\cal N}_\kappa^i} l_{\kappa,\kappa'}^{i,j}{\bf A}_{\kappa,\kappa'}^{i,j,n+\theta} = \sum_{(\kappa',j)\in{\cal N}_\kappa^i} \alpha_{\kappa'}^j\Big({\bf U}_{\kappa,\LO}^{i,n+1} + \frac{l_{\kappa,\kappa'}^{i,j}}{\alpha_{\kappa'}^j}\frac{\Delta t^{(n)}}{M_\kappa^i}{\bf A}_{\kappa,\kappa'}^{i,j,n+\theta} \Big).
\end{equation}

Now we look for the largest coefficients $0\leq k_{\kappa'}^j\leq1$ such that

\begin{equation}\label{eq:tentative_limited_DOF_contrib}
 {\bf U}_{\kappa,\LO}^{i,n+1} + k_{\kappa'}^j\frac{\Delta t^{(n)}}{M_\kappa^i}{\bf A}_{\kappa,\kappa'}^{i,j,n+\theta} \in\D
\end{equation}

\noindent and then set 

\begin{equation}\label{eq:choice-for-lkappa-coeffs}
 l_{\kappa,\kappa'}^{i,j}=l_{\kappa',\kappa}^{j,i}:=\min(\alpha_{\kappa'}^jk_{\kappa'}^j,\alpha_\kappa^ik_\kappa^i)
\end{equation}

\noindent to keep conservation. Note that the coefficients $k_{\kappa'}^j$ in \cref{eq:tentative_limited_DOF_contrib} are well defined by convexity of $\D$ and the fact that ${\bf U}_{\kappa,\LO}^{i,n+1}\in\D$. 

The optimal choices for the $\alpha_{\kappa'}^j$ are the ones that balance the effects of the limiter from every contribution $(\kappa',j)\in{\cal N}_\kappa^i$ to the $i$th DOF. We therefore impose $\alpha_{\kappa'}^jk_{\kappa'}^j=\alpha_\kappa^ik_\kappa^i$ for all $(\kappa',j)$ in ${\cal N}_\kappa^i$, so using $\sum_{(\kappa',j)\in{\cal N}_\kappa^i}\alpha_{\kappa'}^j=1$ gives

\begin{equation}\label{eq:optimal-choice-for-alpha}
 \alpha_{\kappa'}^j = \frac{1}{k_{\kappa'}^j}\frac{1}{\sum\limits_{(\kappa'',l)\in{\cal N}_\kappa^i}\frac{1}{k_{\kappa''}^l}} \quad \forall (\kappa',j) \in {\cal N}_\kappa^i.
\end{equation}

Let us propose an interpretation of the present limiter. Assuming that \cref{eq:choice-for-lkappa-coeffs} results in $ l_{\kappa,\kappa'}^{i,j}=\alpha_\kappa^ik_\kappa^i$ for all $(\kappa',j)$ in ${\cal N}_\kappa^i$ and using \cref{eq:generic-limited-scheme-cv-comb} gives

\begin{align*}
 {\bf U}_{\kappa}^{i,n+1} &= {\bf U}_{\kappa,\LO}^{i,n+1} + \frac{1}{\sum\limits_{(\kappa',j)\in{\cal N}_\kappa^i}\frac{1}{k_{\kappa'}^j}} \frac{\Delta t^{(n)}}{M_\kappa^i} \sum\limits_{(\kappa',j)\in{\cal N}_\kappa^i} {\bf A}_{\kappa,\kappa'}^{i,j,n+\theta} \\
 &= \Big(1-\frac{1}{\sum\limits_{(\kappa',j)\in{\cal N}_\kappa^i}\frac{1}{k_{\kappa'}^j}}\Big) {\bf U}_{\kappa,\LO}^{i,n+1} + \frac{1}{\sum\limits_{(\kappa',j)\in{\cal N}_\kappa^i}\frac{1}{k_{\kappa'}^j}}{\bf U}_{\kappa,\HO}^{i,n+1},
\end{align*}

\noindent which would define ${\bf U}_{\kappa}^{i,n+1}$ as a convex combination of ${\bf U}_{\kappa,\LO}^{i,n+1}$ and the HO update ${\bf U}_{\kappa,\HO}^{i,n+1}$.

Note that the above relation may not hold when it happens that $l_{\kappa,\kappa'}^{i,j}<\alpha_\kappa^ik_\kappa^i$ for some $(\kappa',j)\in {\cal N}_\kappa^i$ due to \cref{eq:choice-for-lkappa-coeffs}; however, the choice \cref{eq:tentative_limited_DOF_contrib,eq:choice-for-lkappa-coeffs,eq:optimal-choice-for-alpha} is an attempt to keep the limited solution as close as possible to local convex combinations of the LO and HO solutions, while maintaining conservation.

As a consequence of the limiter \cref{eq:HO-LO-generic-limited-scheme}, the limited solution is a convex combination of quantities that are in $\D$ and therefore satisfies all the invariant domains: ${\bf U}_\kappa^{i,n+1}\in\D$. Indeed, rewriting again \cref{eq:HO-LO-generic-limited-scheme} as in \cref{eq:generic-limited-scheme-cv-comb} and using the fact that $l_{\kappa,\kappa'}^{i,j}\leq\alpha_{\kappa'}^jk_{\kappa'}^j$ by \cref{eq:choice-for-lkappa-coeffs}, we have

\begin{equation*}
 {\bf U}_{\kappa,\LO}^{i,n+1} + \frac{l_{\kappa,\kappa'}^{i,j}}{\alpha_{\kappa'}^j}\frac{\Delta t^{(n)}}{M_\kappa^i}{\bf A}_{\kappa,\kappa'}^{i,j,n+\theta} = \Big(1-\frac{l_{\kappa,\kappa'}^{i,j}}{\alpha_{\kappa'}^jk_{\kappa'}^j}\Big){\bf U}_{\kappa,\LO}^{i,n+1} + \frac{l_{\kappa,\kappa'}^{i,j}}{\alpha_{\kappa'}^jk_{\kappa'}^j}\Big({\bf U}_{\kappa,\LO}^{i,n+1} + k_{\kappa'}^j\frac{\Delta t^{(n)}}{M_\kappa^i}{\bf A}_{\kappa,\kappa'}^{i,j,n+\theta}\Big),
\end{equation*}

\noindent which is a convex combination of quantities in $\D$ from \cref{eq:tentative_limited_DOF_contrib} and we conclude that ${\bf U}_\kappa^{i,n+1}\in\D$ from \cref{eq:generic-limited-scheme-cv-comb}.

\begin{remark}
 The decomposition \cref{eq:generic-limited-scheme-cv-comb} follows the convex limiting strategy \cite[Eq.~(7.7)]{Guermond_etal_IDP_conv_lim_18} and quasiconcave functions may be also used to estimate the $k_{\kappa'}^j$ in \cref{eq:tentative_limited_DOF_contrib}. We however differ from this framework by the choice of the limiting coefficients $l_{\kappa,\kappa'}^{i,j}$ and the convex coefficients $\alpha_{\kappa'}^j$ in \cref{eq:tentative_limited_DOF_contrib,eq:choice-for-lkappa-coeffs,eq:optimal-choice-for-alpha} as described above. In particular, the choice of the convex coefficients $\alpha_{\kappa'}^j$ is arbitrary in the convex limiting approach \cite[Rem.~7.22]{Guermond_etal_IDP_conv_lim_18}, while they are here defined by \cref{eq:optimal-choice-for-alpha} to optimize the effect of the limiter. Likewise, the present limiter applies to both implicit and explicit time stepping as described in \cref{sec:time_discretization}. 
\end{remark}

\subsection{Iterative limiter}\label{sec:iterative_limiter}

Once the limiter is applied to produce a limited solution ${\bf u}_h^{n+1}\in\D$, this solution may be reused to again limit the HO solution ${\bf u}_{\HO}^{n+1}$ around this new IDP solution ${\bf u}_h^{n+1}$. This defines an iterative limiting procedure that has also been proposed in the context of the FCT limiter \cite{Schar_iterative_FCT_96}. We here describe this procedure in the context of the present limiter.

Let ${\bf u}_h^{n+1,(k)}\in\D$ be an IDP solution after iteration $k\geq0$, the objective being to apply the limiter introduced above in \cref{sec:IDP_limiter_details} to define a new conservative IDP solution ${\bf u}_h^{n+1,(k+1)}\in\D$ closer to ${\bf u}_{\HO}^{n+1}$. 
The unlimited and limited schemes read 

\begin{align}
 \frac{M_\kappa^i}{\Delta t^{(n)}}\big({\bf U}_{\kappa,\HO}^{i,n+1} - {\bf U}_{\kappa}^{i,n+1,(k)}\big) &= \sum_{(\kappa',j)\in{\cal N}_\kappa^i} {\bf A}_{\kappa,\kappa'}^{i,j,n+\theta,(k)}, \nonumber\\
 \frac{M_\kappa^i}{\Delta t^{(n)}}\big({\bf U}_{\kappa}^{i,n+1,(k+1)} - {\bf U}_{\kappa}^{i,n+1,(k)}\big) &= \sum_{(\kappa',j)\in{\cal N}_\kappa^i} l_{\kappa,\kappa'}^{i,j,(k)}{\bf A}_{\kappa,\kappa'}^{i,j,n+\theta,(k)}, \label{eq:iterative-limiter-b} 
\end{align}

\noindent where we look for coefficients $0\leq l_{\kappa,\kappa'}^{i,j,(k)}=l_{\kappa',\kappa}^{j,i,(k)}\leq 1$ such that ${\bf u}_h^{n+1,(k+1)}\in\D$.

Using the notations in \cref{sec:IDP_limiter_details}, we have ${\bf u}_h^{n+1,(0)}\equiv{\bf u}_{\LO}^{n+1}$ defined from \cref{eq:LO-generic-scheme} and ${\bf u}_h^{n+1,(1)}\equiv{\bf u}_h^{n+1}$ the first limited solution defined from \cref{eq:HO-LO-generic-limited-scheme}, while $l_{\kappa',\kappa}^{j,i,(0)}=l_{\kappa,\kappa'}^{i,j}$ and ${\bf A}_{\kappa,\kappa'}^{i,j,n+\theta,(0)}={\bf A}_{\kappa,\kappa'}^{i,j,n+\theta}$. 

Subtracting the two above equations, we get the relation between the HO and limited solutions:

\begin{equation*}
 \frac{M_\kappa^i}{\Delta t^{(n)}}\big({\bf U}_{\kappa,\HO}^{i,n+1} - {\bf U}_{\kappa}^{i,n+1,(k+1)}\big) = \sum_{(\kappa',j)\in{\cal N}_\kappa^i} (1-l_{\kappa,\kappa'}^{i,j,(k)}){\bf A}_{\kappa,\kappa'}^{i,j,n+\theta,(k)} = \sum_{(\kappa',j)\in{\cal N}_\kappa^i} {\bf A}_{\kappa,\kappa'}^{i,j,n+\theta,(k+1)}, 
\end{equation*}

\noindent  where the ${\bf A}_{\kappa,\kappa'}^{i,j,n+\theta,(k+1)}:=(1-l_{\kappa,\kappa'}^{i,j,(k)}){\bf A}_{\kappa,\kappa'}^{i,j,n+\theta,(k)}$ constitute the antidiffusive fluxes between the new LO solution, ${\bf u}_{h}^{n+1,(k+1)}$, and the HO solution, ${\bf u}_{\HO}^{n+1}$ that may be reused for the next step and further limited with $l_{\kappa,\kappa'}^{i,j,(k+1)}$ coefficients. By induction, we obtain an explicit form of the antidiffusive fluxes ${\bf A}_{\kappa,\kappa'}^{i,j,n+\theta,(k+1)}=\Pi_{l=0}^k(1-l_{\kappa,\kappa'}^{i,j,(l)}){\bf A}_{\kappa,\kappa'}^{i,j,n+\theta}$. This defines an iterative algorithm that does not require one to apply the numerical scheme: at a given iteration $k\geq0$ we only need to compute the new limiting coefficients $l_{\kappa,\kappa'}^{i,j,(k)}$ and scale the antidiffusive fluxes by $(1-l_{\kappa,\kappa'}^{i,j,(k)})$ to obtain the next ones.

The algorithm flowchart of the iterative limiter is described in \Cref{algo:iterative_IDP_limiter}, where the iterative loop is stopped when a maximum number of iterations or a given tolerance on the relative distance between two consecutive limited solutions is reached. The following results provide some information about convergence of the present iterative limiter.

\begin{proposition}[Existence of a fixed point]
 The iterative limiter \cref{eq:iterative-limiter-b} has at least one fixed point.
\end{proposition}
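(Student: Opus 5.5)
The plan is to avoid a topological fixed-point theorem and instead exhibit a fixed point directly, using monotonicity and boundedness. The iteration map sends an IDP state ${\bf u}_h^{n+1,(k)}$, together with its residual antidiffusive fluxes ${\bf A}^{(k)}$, to the pair $({\bf u}_h^{n+1,(k+1)},{\bf A}^{(k+1)})$. Here
\begin{equation*}
{\bf A}_{\kappa,\kappa'}^{i,j,n+\theta,(k+1)}=\big(1-l_{\kappa,\kappa'}^{i,j,(k)}\big){\bf A}_{\kappa,\kappa'}^{i,j,n+\theta,(k)}.
\end{equation*}
A fixed point is a state at which all the products ${\bf A}_{\kappa,\kappa'}^{i,j,n+\theta,(k)}\,l_{\kappa,\kappa'}^{i,j,(k)}$ vanish. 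Equivalently, every residual flux is either zero or receives a zero limiting coefficient.

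The first step is to use the explicit product form. By induction,
\begin{equation*}
{\bf A}_{\kappa,\kappa'}^{i,j,n+\theta,(k)}=c_{\kappa,\kappa'}^{i,j,(k)}{\bf A}_{\kappa,\kappa'}^{i,j,n+\theta},
\qquad
c_{\kappa,\kappa'}^{i,j,(k)}=\Pi_{l=0}^{k-1}\big(1-l_{\kappa,\kappa'}^{i,j,(l)}\big)\in[0,1].
\end{equation*}
Each scalar sequence $c^{(k)}$ is nonincreasing and bounded below by $0$, so it converges to some limit $c^{(\infty)}$. The sets ${\cal N}_\kappa^i$ and $\Omega_h$ are finite, so this convergence holds simultaneously for all indices. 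Consequently ${\bf U}_\kappa^{i,n+1,(k)}={\bf U}_{\kappa,\HO}^{i,n+1}-\frac{\Delta t^{(n)}}{M_\kappa^i}\sum c^{(k)}{\bf A}$ converges to a limit ${\bf U}^{(\infty)}$. Each iterate lies in $\D$, but $\D$ may not be closed (e.g.\ $\rho>0$), so the limit need not be in $\D$ a priori. I would handle this either by assuming closedness, as with the entropy bound $\s\geq\s_0$ intersected with the admissible set, or by arguing that the limit still lies in $\D$ because the $k$-coefficients keep iterates in a compact subset.

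The second step is to show that the limit is a fixed point. Since $c^{(k+1)}=(1-l^{(k)})c^{(k)}$, either $c^{(\infty)}=0$ or $l^{(k)}\to0$. In both cases the increments $l^{(k)}{\bf A}^{(k)}$ tend to zero. I then need to show that applying the limiter at the limit state gives zero increments as well. This requires continuity, or at least upper semicontinuity, of the map from (state, fluxes) to the coefficients $k_{\kappa'}^j$ and hence $l$. This is the main obstacle, because the largest $k$ in \cref{eq:tentative_limited_DOF_contrib} is generally only upper semicontinuous, and the harmonic-mean $\alpha$ in \cref{eq:optimal-choice-for-alpha} degenerates when some $k=0$.

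As a fallback that avoids this obstacle, I would construct a fixed point trivially. If some state ${\bf u}^*\in\D$ has all its limiting coefficients equal to zero, it is stationary. The HO solution itself is a fixed point whenever it lies in $\D$, since then all residual fluxes are $0$. Otherwise, consider a state reached with $c^{(k)}=0$ on some fluxes and $k_{\kappa'}^j=0$ on the others. Then \cref{eq:choice-for-lkappa-coeffs} gives $l=0$, so the iteration stalls. Existence thus reduces to showing that the monotone limit realizes such a configuration, which I would argue index by index from the dichotomy above.
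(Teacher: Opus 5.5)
Your first step is correct and already gives more than the paper. Since $0\leq l_{\kappa,\kappa'}^{i,j,(k)}\leq1$, each factor $c^{(k)}=\Pi_{l=0}^{k-1}(1-l^{(l)})$ is nonincreasing in $[0,1]$, so the iterates converge.

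The gap is the second step, which you leave open. Index the antidiffusive fluxes by $e$. You must show $l_e(c^{(\infty)})\,c_e^{(\infty)}=0$ for all $e$, knowing only that $l_e(c^{(k)})\to0$ whenever $c_e^{(\infty)}>0$.

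\begin{itemize}
\item Upper semicontinuity, which you say would suffice, only gives $l_e(c^{(\infty)})\geq\limsup_k l_e(c^{(k)})=0$, which is vacuous. You need lower semicontinuity, $l_e(c^{(\infty)})\leq\liminf_k l_e(c^{(k)})=0$. As you note yourself, the maximal admissible $k$ in \cref{eq:tentative_limited_DOF_contrib} is in general only upper semicontinuous for closed $\D$, which is the useless direction.
\item The harmonic mean is not where things break. $l$ depends on the $\alpha_{\kappa'}^j$ only through the products $\alpha_{\kappa'}^jk_{\kappa'}^j=\big(\sum_{(\kappa'',l)\in{\cal N}_\kappa^i}1/k_{\kappa''}^l\big)^{-1}$, and these extend continuously by $0$ when some $k$ vanishes.
\item A genuine source of discontinuity is the following. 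Let $s$ be the largest $t\geq0$ with ${\bf U}+t\,\tfrac{\Delta t^{(n)}}{M_\kappa^i}{\bf A}_{\kappa,\kappa'}^{i,j,n+\theta}\in\D$. Then $k_{\kappa'}^j=\min(1,s/c_e)$, which is a $0/0$ form when the state reaches $\partial\D$ while $c_e\to0$, yet it equals $1$ at $c_e=0$.
\item Your fallback does not avoid this. ``Showing that the monotone limit realizes such a configuration'' is exactly the same passage to the limit, so the fallback is circular.
\item Keeping the iterates in a compact subset of $\D$ is not available either without the $\epsilon$-ball hypothesis of \cref{th:recover_IDP_HO_sol}.
\end{itemize}

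What closes the argument is the hypothesis the paper asserts without proof. The paper bounds the iterates in a ball centred at ${\bf U}_{\kappa,\HO}^{i,n+1}$ and applies Brouwer's theorem ``by continuity of the mapping''. Concretely, what is needed is continuity of the one-step map $T(c)=\big((1-l_e(c))\,c_e\big)_e$ on $K=\{c\in[0,1]^{N_A}:\ {\bf U}(c)\in\D\}$. Here $N_A$ is the number of antidiffusive fluxes, ${\bf U}(c)$ is the state given by your formula, and $\D$ is closed so that $c^{(\infty)}\in K$. Working with $c$ rather than with the state is the right choice, since the state alone does not determine the residual fluxes.

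Once you state this assumption, your argument ends in one line: $T(c^{(\infty)})=\lim_kT(c^{(k)})=\lim_kc^{(k+1)}=c^{(\infty)}$. It is then better than the paper's argument in three ways. It needs continuity only at $c^{(\infty)}$ rather than on all of $K$. It uses no fixed-point theorem. It proves that the iterates of \cref{algo:iterative_IDP_limiter} converge to a fixed point, not merely that one exists.

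As written, though, the proposal does not establish the statement.
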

 
\begin{proof}
 Every iteration of the limiter satisfies $\|{\bf U}_{\kappa,\HO}^{i,n+1} - {\bf U}_{\kappa}^{i,n+1,(k)}\|\leq \tfrac{\Delta t^{(n)}}{M_\kappa^i}\sum_{(\kappa',j)}\|{\bf A}_{\kappa,\kappa'}^{i,j,n+\theta}\|$ and therefore remains in a finite dimensional ball equipped with the Euclidean norm and centered on ${\bf U}_{\kappa,\HO}^{i,n+1}$. By continuity of the mapping \cref{eq:iterative-limiter-b} between two successive limited solutions, there exists at least one fixed point according to Brouwer's fixed point theorem in finite dimension.  
\end{proof}

\begin{proposition}[Recovering IDP HO solutions]\label{th:recover_IDP_HO_sol}
 Assume that ${\bf U}_{\kappa,\HO}^{i,n+1}\in\D$ and that there exists $\epsilon>0$ such that the ball ${\cal B}({\bf U}_{\kappa,\LO}^{i,n+1},\epsilon)\subset\D$ and that all $k_{\kappa'}^{j,(k)}$ in \cref{eq:tentative_limited_DOF_contrib} are chosen to satisfy ${\cal B}({\bf U}_{\kappa}^{i,n+1,(k)} + k_{\kappa'}^{j,(k)}\frac{\Delta t^{(n)}}{M_\kappa^i}{\bf A}_{\kappa,\kappa'}^{i,j,n+\theta},\epsilon) \subset\D$ at every iteration $k\geq0$ of the limiter. Then, the iterative limiter \cref{eq:iterative-limiter-b} converges to a unique fixed point satisfying $\lim_{k\rightarrow\infty}{\bf U}_{\kappa}^{i,n+1,(k)}={\bf U}_{\kappa,\HO}^{i,n+1}$.
\end{proposition}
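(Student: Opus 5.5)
The plan is to show that the iteration contracts the antidiffusive fluxes geometrically, using the $\epsilon$-ball assumption to bound every limiting coefficient uniformly away from zero. Write $\delta_\kappa^i:=\frac{\Delta t^{(n)}}{M_\kappa^i}$ and $C:=\max_{\kappa,i,(\kappa',j)}\delta_\kappa^i\|{\bf A}_{\kappa,\kappa'}^{i,j,n+\theta}\|$. By construction the fluxes satisfy ${\bf A}^{(k)}=\Pi_{l<k}(1-l^{(l)}){\bf A}$ entrywise, so $\|{\bf A}^{(k)}_{\kappa,\kappa'}\|\le\|{\bf A}_{\kappa,\kappa'}\|$. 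The quantity to control is therefore the product $\Pi_{l}(1-l^{(l)}_{\kappa,\kappa'})$. It suffices to show $l^{(k)}_{\kappa,\kappa'}\ge c>0$ uniformly in $k$; the product then decays like $(1-c)^k$. Hence
\[
\|{\bf U}_{\kappa,\HO}^{i,n+1}-{\bf U}_\kappa^{i,n+1,(k)}\|\le\delta_\kappa^i\sum_{(\kappa',j)\in{\cal N}_\kappa^i}\|{\bf A}^{(k)}_{\kappa,\kappa'}\|\le |{\cal N}_\kappa^i|\,C(1-c)^k\to0,
\]
which gives convergence to ${\bf U}_{\kappa,\HO}^{i,n+1}$. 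Uniqueness of the limit, and the fact that it is a fixed point, follow immediately: once all fluxes vanish, the update \cref{eq:iterative-limiter-b} is the identity.

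The main step is the uniform lower bound on the $k_{\kappa'}^{j,(k)}$. I read the hypothesis as saying that each tentative state ${\bf U}_\kappa^{i,n+1,(k)}+k\,\delta_\kappa^i{\bf A}^{(k)}_{\kappa,\kappa'}$ keeps an $\epsilon$-ball inside $\D$. It then suffices that the base point ${\bf U}_\kappa^{i,n+1,(k)}$ has an $\epsilon$-ball in $\D$. This holds at $k=0$ by the assumption on ${\bf U}_{\kappa,\LO}^{i,n+1}$. It propagates by induction, because ${\bf U}^{(k+1)}$ is a convex combination, as in \cref{eq:generic-limited-scheme-cv-comb}, of points whose $\epsilon$-balls lie in $\D$, and a convex combination of balls of radius $\epsilon$ contained in a convex set is again a ball of radius $\epsilon$ in that set. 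Given this, any step of length $\le\epsilon$ is admissible, so one may take $k_{\kappa'}^{j,(k)}\ge\min(1,\epsilon/(\delta_\kappa^i\|{\bf A}^{(k)}_{\kappa,\kappa'}\|))\ge\min(1,\epsilon/C)=:\underline k$, since the largest admissible coefficient is at least this.

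Next I translate this into a bound on $l^{(k)}$ through \cref{eq:optimal-choice-for-alpha,eq:choice-for-lkappa-coeffs}. Here $\alpha_{\kappa'}^jk_{\kappa'}^j=1/\sum_{(\kappa'',l)}1/k_{\kappa''}^l\ge \underline k/N$, where $N:=\max_{\kappa,i}|{\cal N}_\kappa^i|$. Taking the minimum over both sides of the interface gives $l^{(k)}_{\kappa,\kappa'}\ge \underline k/N=:c\in(0,1]$, independent of $k$. This closes the geometric decay argument.

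The obstacle I expect is exactly the propagation of the interior-ball property along the iterates, together with interpreting the hypothesis correctly. Note that the tentative states in the statement are written with the original fluxes ${\bf A}$ rather than ${\bf A}^{(k)}$. I would first check that this is harmless, since $\|{\bf A}^{(k)}\|\le\|{\bf A}\|$ and only the step length matters for the lower bound. I would also check where ${\bf U}_{\kappa,\HO}^{i,n+1}\in\D$ enters: it guarantees that the limit lies in $\D$, so the scheme is consistent with the stated limit, but the quantitative decay relies only on the $\epsilon$-margin. If the induction on the ball property fails in this form, the fallback is to invoke the hypothesis directly at each $k$ to obtain $k_{\kappa'}^{j,(k)}\ge\underline k$, and then repeat the same bound on $l^{(k)}$.
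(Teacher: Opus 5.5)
Your proposal follows the paper's route. You propagate ${\cal B}({\bf U}_\kappa^{i,n+1,(k)},\epsilon)\subset\D$ through the convex-combination rewriting of \cref{eq:iterative-limiter-b}, bound $k_{\kappa'}^{j,(k)}$ from below, pass to $l_{\kappa,\kappa'}^{i,j,(k)}\geq\underline{k}/{\cal N}_{max}$ via $\alpha k=(\sum 1/k)^{-1}$, and conclude from the geometric decay of $\Pi_l(1-l^{(l)})$. Your $k$-independent bound $\underline{k}=\min(1,\epsilon/C)$, obtained from $\|{\bf A}^{(k)}\|\leq\|{\bf A}\|$, improves on the paper. There $\epsilon_k$ depends on $k$, so $(1-\epsilon_k/{\cal N}_{max})^{k+1}$ need not tend to zero.

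\textbf{The gap: the lower bound on $k_{\kappa'}^{j,(k)}$.} This bound is not justified, and the paper's ``therefore there exists $0<\epsilon_k\leq k_{\kappa'}^{j,(k)}$'' has the same defect. You get $k\geq\underline{k}$ from the fact that every step of length at most $\epsilon$ from ${\bf U}^{(k)}$ stays in $\D$, i.e.\ from the rule ``largest $k$ such that the tentative state lies in $\D$''. The induction, however, uses the hypothesis that the tentative state keeps a whole $\epsilon$-ball inside $\D$. A step of length $s$ from a point with margin $\epsilon$ only guarantees margin $\epsilon-s$. Unless no limiting is active, the largest $k$ with the tentative state in $\D$ puts that state on (or arbitrarily close to) $\partial\D$. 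So the two requirements are incompatible in general, which is why Remark~\ref{rk:iterative_algo} speaks of \emph{reducing} the $k$'s. The ball condition only bounds $k$ from above: $k=0$ always satisfies it. It therefore yields no lower bound, and your fallback fails for the same reason.

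\textbf{Why this cannot be patched under the stated hypotheses.} The induction shows that every iterate lies in the convex eroded set $\D_\epsilon:=\{{\bf x}:{\cal B}({\bf x},\epsilon)\subset\D\}$. Convergence to ${\bf U}_{\kappa,\HO}^{i,n+1}$ would therefore force ${\bf U}_{\kappa,\HO}^{i,n+1}\in\overline{\D_\epsilon}$, which ${\bf U}_{\kappa,\HO}^{i,n+1}\in\D$ does not give. For a scalar problem with $\D=[0,1]$, $\epsilon=0.25$, ${\bf U}_{\LO}=0.5$ and ${\bf U}_{\HO}=0.95$, the iterates remain in $[0.25,0.75]$.

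Even with ${\bf U}_{\HO}$ well inside $\D_\epsilon$, the iteration can stall. Take $\epsilon=0.1$, ${\bf U}_{\LO}=0.9$ and scaled fluxes $0.1$ and $-0.3$, so ${\bf U}_{\HO}=0.7$. The ball condition forces $k=0$ for the first flux. Then $\alpha k=(\sum 1/k)^{-1}=0$ for both contributions under \cref{eq:optimal-choice-for-alpha}, so both $l$'s vanish and this DOF never moves.

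\textbf{How to repair it.} Your argument becomes correct if you change the hypothesis. For example, assume directly that ${\cal B}({\bf U}_\kappa^{i,n+1,(k)},\epsilon)\subset\D$ holds for every iterate, and take $k$ as the largest coefficient keeping the tentative state in $\D$. Then $k\geq\underline{k}$ holds and the rest of your proof goes through unchanged. Alternatively, assume a uniform lower bound on the chosen $k$'s.

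On your side remark: ${\bf A}$ versus ${\bf A}^{(k)}$ in the hypothesis is harmless for the induction, but not because of the norm bound. The point ${\bf U}^{(k)}+k\delta{\bf A}^{(k)}$ lies on the segment between ${\bf U}^{(k)}$ and ${\bf U}^{(k)}+k\delta{\bf A}$, and $\D_\epsilon$ is convex.
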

 
\begin{proof}
 First, we proceed by induction to show that ${\cal B}({\bf U}_{\kappa}^{i,n+1,(k)},\epsilon) \subset\D$ holds for every $k\geq0$. This is true for $k=0$ by assumption: ${\bf U}_{\kappa}^{i,n+1,(0)}={\bf U}_{\kappa,\LO}^{i,n+1}$. Then, using \cref{eq:generic-limited-scheme-cv-comb,eq:choice-for-lkappa-coeffs} at iteration $k$ we rewrite \cref{eq:iterative-limiter-b} as 
 
 \begin{align*}
 {\bf U}_{\kappa}^{i,n+1,(k+1)} =& {\bf U}_{\kappa}^{i,n+1,(k)} + \frac{\Delta t^{(n)}}{M_\kappa^i} \sum_{(\kappa',j)\in{\cal N}_\kappa^i} l_{\kappa,\kappa'}^{i,j,(k)}{\bf A}_{\kappa,\kappa'}^{i,j,n+\theta,(k)} \\
 =& \sum_{(\kappa',j)\in{\cal N}_\kappa^i}\alpha_{\kappa'}^{j,(k)} \Big( {\bf U}_{\kappa}^{i,n+1,(k)} + \frac{l_{\kappa,\kappa'}^{i,j,(k)}}{\alpha_{\kappa'}^{j,(k)}}\frac{\Delta t^{(n)}}{M_\kappa^i}{\bf A}_{\kappa,\kappa'}^{i,j,n+\theta,(k)}\Big) \\
 =& \sum_{(\kappa',j)\in{\cal N}_\kappa^i}\alpha_{\kappa'}^{j,(k)} \bigg( \Big(1-\frac{l_{\kappa,\kappa'}^{i,j,(k)}}{\alpha_{\kappa'}^{j,(k)}k_{\kappa'}^{j,(k)}}\Big){\bf U}_{\kappa}^{i,n+1,(k)} \\ &\hspace{2.25cm}+ \frac{l_{\kappa,\kappa'}^{i,j,(k)}}{\alpha_{\kappa'}^{j,(k)}k_{\kappa'}^{j,(k)}}\Big({\bf U}_{\kappa}^{i,n+1,(k)}+k_{\kappa'}^{j,(k)}\frac{\Delta t^{(n)}}{M_\kappa^i}{\bf A}_{\kappa,\kappa'}^{i,j,n+\theta,(k)}\Big) \bigg)
\end{align*}

\noindent which is indeed a convex combination due to \cref{eq:choice-for-lkappa-coeffs} and confirms the induction. Therefore, there exists $\epsilon_k$ such that $0<\epsilon_k\leq k_{\kappa'}^{j,(k)}$, from which we deduce $\tfrac{\epsilon_k}{{\cal N}_{max}}\leq l_{\kappa,\kappa'}^{i,j,(k)}<1$ by \cref{eq:choice-for-lkappa-coeffs,eq:optimal-choice-for-alpha}, where ${\cal N}_{max}:=\max(\#{\cal N}_{\kappa\in\Omega_h}^{1\leq i\leq N_p})$. We therefore have $0<\Pi_{l=0}^{k-1}(1-l_{\kappa,\kappa'}^{i,j,(l)})\leq(1-\tfrac{\epsilon_k}{{\cal N}_{max}})^{k+1}$ and we conclude by noting that ${\bf A}_{\kappa,\kappa'}^{i,j,n+\theta,(k+1)}=\Pi_{l=0}^{k-1}(1-l_{\kappa,\kappa'}^{i,j,(l)}){\bf A}_{\kappa,\kappa'}^{i,j,n+\theta}\rightarrow_k0$.
\end{proof}

\begin{corollary}[Maximum convergence speed]
 Under the assumptions of \cref{th:recover_IDP_HO_sol} and further assuming that no limiting is needed, i.e. every $k_{\kappa'}^{j,(k)}=1$, together with a uniform stencil, $\#{\cal N}_{\kappa\in\Omega_h}^{1\leq i\leq N_p}={\cal N}_{max}$, the iterative limiter is a contraction mapping with $1-\tfrac{1}{{\cal N}_{max}}$ as Lipschitz coefficient:
 \begin{equation*}
  \|{\bf U}_{\kappa,\HO}^{i,n+1} - {\bf U}_{\kappa}^{i,n+1,(k+1)}\|=\big(1-\tfrac{1}{{\cal N}_{max}}\big)\|{\bf U}_{\kappa,\HO}^{i,n+1} - {\bf U}_{\kappa}^{i,n+1,(k)}\|.
 \end{equation*}
\end{corollary}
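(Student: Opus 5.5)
Under the stated assumptions the limiter reduces to an explicit linear update, so I would write the identity by direct computation rather than through any fixed-point argument. First I evaluate the coefficients \cref{eq:optimal-choice-for-alpha} when every $k_{\kappa'}^{j,(k)}=1$: each term $1/k_{\kappa''}^l$ equals one, so $\alpha_{\kappa'}^{j,(k)}=1/\#{\cal N}_\kappa^i=1/{\cal N}_{max}$ by the uniform stencil assumption. The same value holds at the neighbouring DOF $(\kappa',j)$, since its stencil also has ${\cal N}_{max}$ entries. Hence \cref{eq:choice-for-lkappa-coeffs} gives $l_{\kappa,\kappa'}^{i,j,(k)}=\min(1/{\cal N}_{max},1/{\cal N}_{max})=1/{\cal N}_{max}$ for every pair and every iteration $k$. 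In particular the minimum in \cref{eq:choice-for-lkappa-coeffs} is never active, so the limiter acts uniformly.

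Next I use the relation between the HO and limited solutions derived after \cref{eq:iterative-limiter-b}:
\[
\frac{M_\kappa^i}{\Delta t^{(n)}}\big({\bf U}_{\kappa,\HO}^{i,n+1}-{\bf U}_{\kappa}^{i,n+1,(k+1)}\big)=\sum_{(\kappa',j)\in{\cal N}_\kappa^i}\big(1-l_{\kappa,\kappa'}^{i,j,(k)}\big){\bf A}_{\kappa,\kappa'}^{i,j,n+\theta,(k)}.
\]
Because the factor $1-l_{\kappa,\kappa'}^{i,j,(k)}=1-1/{\cal N}_{max}$ does not depend on $(\kappa',j)$, it can be taken out of the sum. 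The remaining sum is, by the first equation of \cref{eq:iterative-limiter-b}, equal to $\tfrac{M_\kappa^i}{\Delta t^{(n)}}\big({\bf U}_{\kappa,\HO}^{i,n+1}-{\bf U}_{\kappa}^{i,n+1,(k)}\big)$. This yields the vector identity
\[
{\bf U}_{\kappa,\HO}^{i,n+1}-{\bf U}_{\kappa}^{i,n+1,(k+1)}=\big(1-\tfrac{1}{{\cal N}_{max}}\big)\big({\bf U}_{\kappa,\HO}^{i,n+1}-{\bf U}_{\kappa}^{i,n+1,(k)}\big).
\]
Taking norms gives the stated equality. Since ${\cal N}_{max}\geq2$, we have $0\le 1-1/{\cal N}_{max}<1$, so the map is a contraction toward ${\bf U}_{\kappa,\HO}^{i,n+1}$ with this Lipschitz constant. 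The convergence is geometric, consistent with \cref{th:recover_IDP_HO_sol}.

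The computation itself is routine; the one point requiring care is justifying that $k_{\kappa'}^{j,(k)}=1$ is admissible at every iteration. Each intermediate state must lie in $\D$, and these states interpolate convexly between ${\bf U}_\kappa^{i,n+1,(k)}$ and ${\bf U}_{\kappa,\HO}^{i,n+1}$ shifted by partial fluxes. Rather than deriving this, I would rely on the hypothesis of the statement, which simply assumes no limiting is needed, together with the invariance of $\D$ under the iteration established in the proof of \cref{th:recover_IDP_HO_sol}. I would also note explicitly that the uniform stencil is what makes $\alpha$ equal on both sides of each pair, so that the symmetric minimum in \cref{eq:choice-for-lkappa-coeffs} does not reduce any coefficient below $1/{\cal N}_{max}$.
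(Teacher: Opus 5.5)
Your proof is correct and matches the argument implicit in the paper, which states the corollary without a separate proof as the equality case of the bound $l_{\kappa,\kappa'}^{i,j,(k)}\geq\epsilon_k/{\cal N}_{max}$ in the proof of \cref{th:recover_IDP_HO_sol}: with all $k_{\kappa'}^{j,(k)}=1$ and a uniform stencil, \cref{eq:optimal-choice-for-alpha,eq:choice-for-lkappa-coeffs} give $l_{\kappa,\kappa'}^{i,j,(k)}=1/{\cal N}_{max}$, so every antidiffusive flux is scaled by $1-1/{\cal N}_{max}$, and factoring this out of the sum gives the identity. As a minor point, ${\cal N}_{max}\geq2$ is not needed, since ${\cal N}_{max}\geq1$ already gives $0\leq1-1/{\cal N}_{max}<1$.
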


\begin{remark}\label{rk:iterative_algo}
 Some remarks are in order. First, the assumption of \cref{th:recover_IDP_HO_sol} is easily ensured by reducing the limiting coefficients $k_{\kappa'}^{j,(k)}$ from their theoretical values (see \cref{sec:IDP_constraints} for their practical evaluation), which is usually applied in practice to ensure that ${\bf U}_{\kappa}^{i,n+1,(k)} + k_{\kappa'}^{j,(k)}\frac{\Delta t^{(n)}}{M_\kappa^i}{\bf A}_{\kappa,\kappa'}^{i,j,n+\theta}$ is included in the open convex set $\D$. Then, the convergence speed decreases as ${\cal N}_{max}$ increases and may reach low values for HO methods with large stencils. This motivates the slight modification of \cref{algo:iterative_IDP_limiter} in \cref{sec:acc_iterative_limiter} to accelerate its convergence, as well as the gathering of antidiffusive flux contributions at the same interface for DG schemes in \cref{sec:DG_schemes,sec:DGSEM_IDP_limiter} to lower ${\cal N}_{max}$.
\end{remark}

\begin{algorithm}
\caption{Algorithm flowchart of the iterative IDP limiter, where the user-defined parameters $k_{max}\geq1$ and $0<\TOL\ll 1$ denote the maximum number of iterations and the tolerance, respectively.}\label{algo:iterative_IDP_limiter}
\begin{algorithmic}[1]
\REQUIRE ${\bf u}_h^n$, $k_{max}$, $\TOL$
\STATE{compute ${\bf u}_{\HO}^{n+1}$ from \cref{eq:HO-generic-scheme}}
\STATE{compute ${\bf u}_{\LO}^{n+1}$ from \cref{eq:LO-generic-scheme}}
\STATE{compute ${\bf A}_{\kappa,\kappa'}^{i,j,n+\theta}$ defined in \cref{eq:HO-LO-generic-scheme}}
\STATE{$r_0\gets\|{\bf u}_{\HO}^{n+1}-{\bf u}_{\LO}^{n+1}\|_{L^2(\Omega_h)}^2$}
\STATE{$r\gets r_0$}
\STATE{$k\gets$ 0}
\WHILE{$k\leq k_{max}$ \AND $r>\TOL\,r_0$} 
    \STATE{compute the limiting coefficients $l_{\kappa,\kappa'}^{i,j}$ from \cref{eq:choice-for-lkappa-coeffs,eq:optimal-choice-for-alpha}}
    \STATE{${\bf A}_{\kappa,\kappa'}^{i,j,n+\theta}\gets(1-l_{\kappa,\kappa'}^{i,j}){\bf A}_{\kappa,\kappa'}^{i,j,n+\theta}$}
    \STATE{compute ${\bf u}_h^{n+1}$ from \cref{eq:HO-LO-generic-limited-scheme}}
    \STATE{$r\gets \|{\bf u}_{h}^{n+1}-{\bf u}_{\LO}^{n+1}\|_{L^2(\Omega_h)}^2$ computed from \cref{eq:iterative-limiter-b}}
    \STATE{${\bf u}_{\LO}^{n+1}\gets {\bf u}_h^{n+1}$}
    \STATE{$k\gets k+1$}
\ENDWHILE
\RETURN{${\bf u}_h^{n+1}$}
\end{algorithmic}
\end{algorithm} 

\subsection{Accelerating the iterative process}\label{sec:acc_iterative_limiter}

We here propose a heuristic to accelerate the convergence of the iterative algorithm. \Cref{algo:iterative_IDP_limiter} is modified by scaling the antidiffusive coefficients with a factor $\beta \ge 1$. For $\beta > 1$, the iterative solution is updated using potentially larger limiting coefficients, accelerating the convergence of the iterative limiter. In practice, the limiting coefficients in step 8 of \cref{algo:iterative_IDP_limiter} are modified through  \cref{eq:tentative_limited_DOF_contrib,eq:choice-for-lkappa-coeffs}: we now look for $0\leq k_{\kappa'}^j\leq 1$ such that

\begin{equation*}
 {\bf U}_{\kappa,\LO}^{i,n+1} + \beta k_{\kappa'}^j\frac{\Delta t^{(n)}}{M_\kappa^i}{\bf A}_{\kappa,\kappa'}^{i,j,n+\theta} \in\D
\end{equation*}

\noindent and then set 

\begin{equation*}
 l_{\kappa,\kappa'}^{i,j}=l_{\kappa',\kappa}^{j,i}:=\beta\min(\alpha_{\kappa'}^jk_{\kappa'}^j,\alpha_\kappa^ik_\kappa^i).
\end{equation*}

A schematic example is shown in \cref{fig:schematic_iterative_loop}, where the case ${\bf u}_{\HO}^{n+1} \in \mathcal{D}$ is considered. Using an acceleration factor $\beta = 2$, the convergence toward the HO solution is achieved more rapidly than with the standard choice $\beta = 1$, as evidenced by the faster decay of the update step size measured in both $L^1$ and $L^2$ norms over the IDP iterations. For completeness, a second case is also presented in which ${\bf u}_{\HO}^{n+1} \notin \mathcal{D}$ and $\beta = 1$. In this situation, the same convergence behavior is observed for all $\beta \ge 1$, due to the effect of the limiting coefficients $k_{\kappa}^{\pm}$.

\begin{figure}[H]
    \centering
    \subfloat[${\bf u}_{\HO}^{n+1} \in \D$, $\beta=1$]{
        \begin{minipage}{0.3\linewidth}
            \centering
            \includegraphics[trim={0.15cm 0.15cm 0.5cm 0.15cm},clip,width=\linewidth]{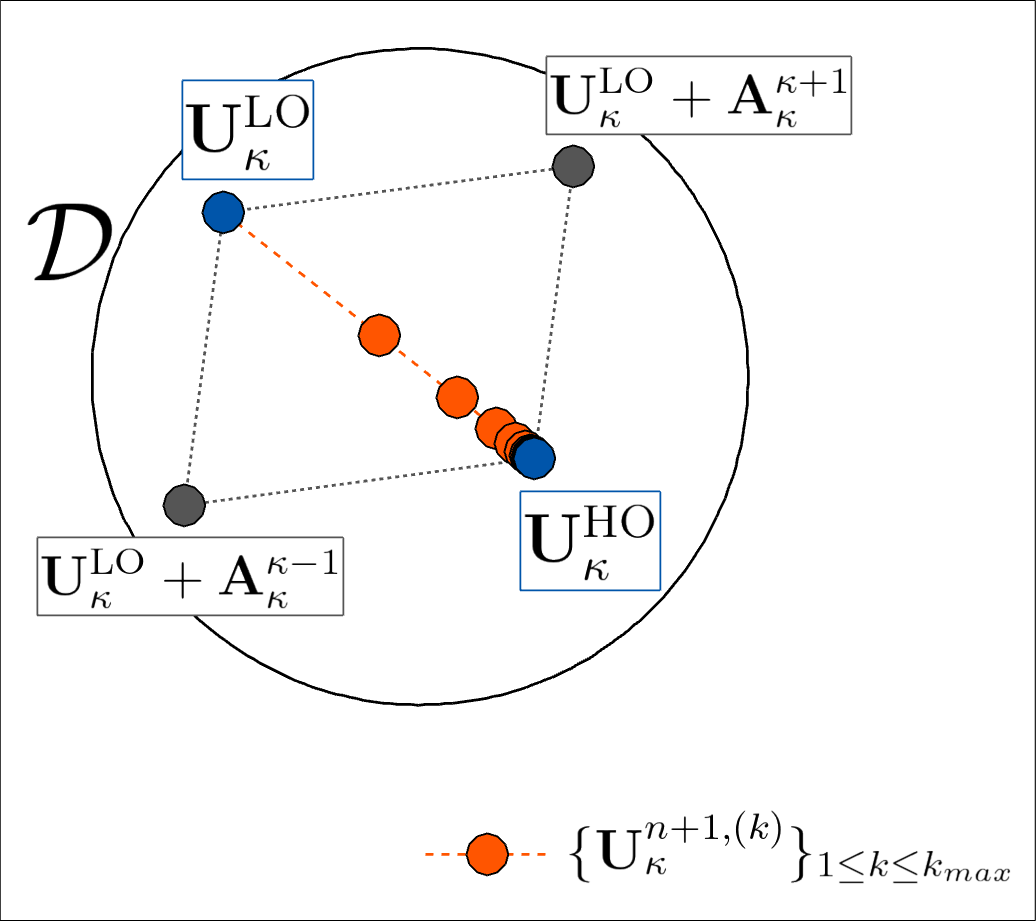}
            \includegraphics[trim={0.15cm 0.15cm 0.15cm 0.15cm},clip,width=\linewidth]{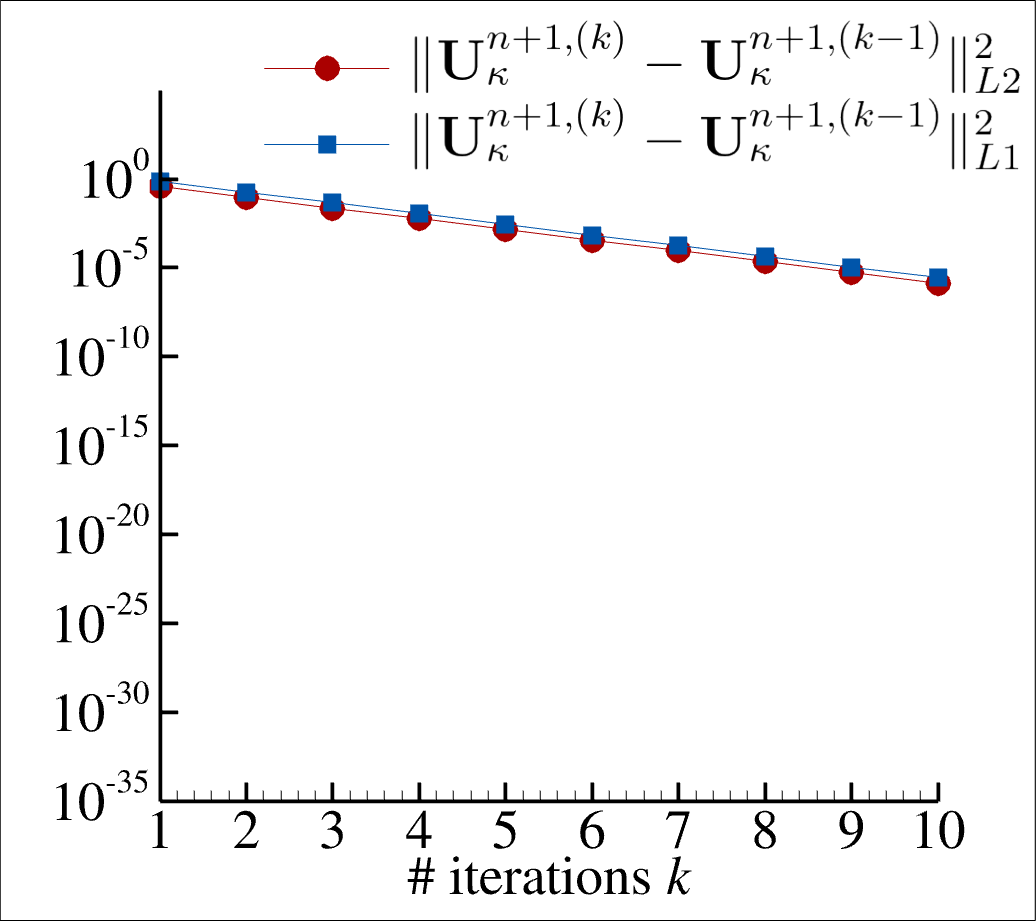}
        \end{minipage}
    }
    \hfill
    \subfloat[${\bf u}_{\HO}^{n+1} \in \D$, $\beta=2$]{
        \begin{minipage}{0.3\linewidth}
            \centering
            \includegraphics[trim={0.15cm 0.15cm 0.5cm 0.15cm},clip,width=\linewidth]{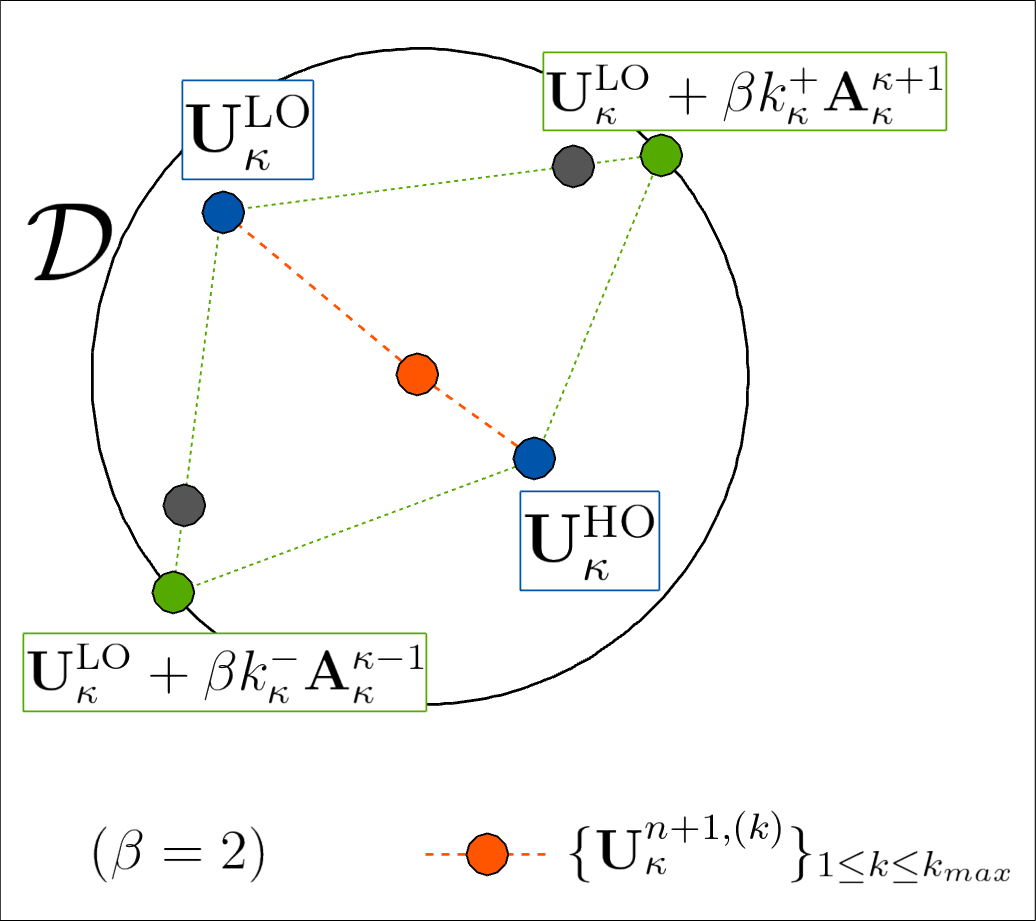}
            \includegraphics[trim={0.15cm 0.15cm 0.15cm 0.15cm},clip,width=\linewidth]{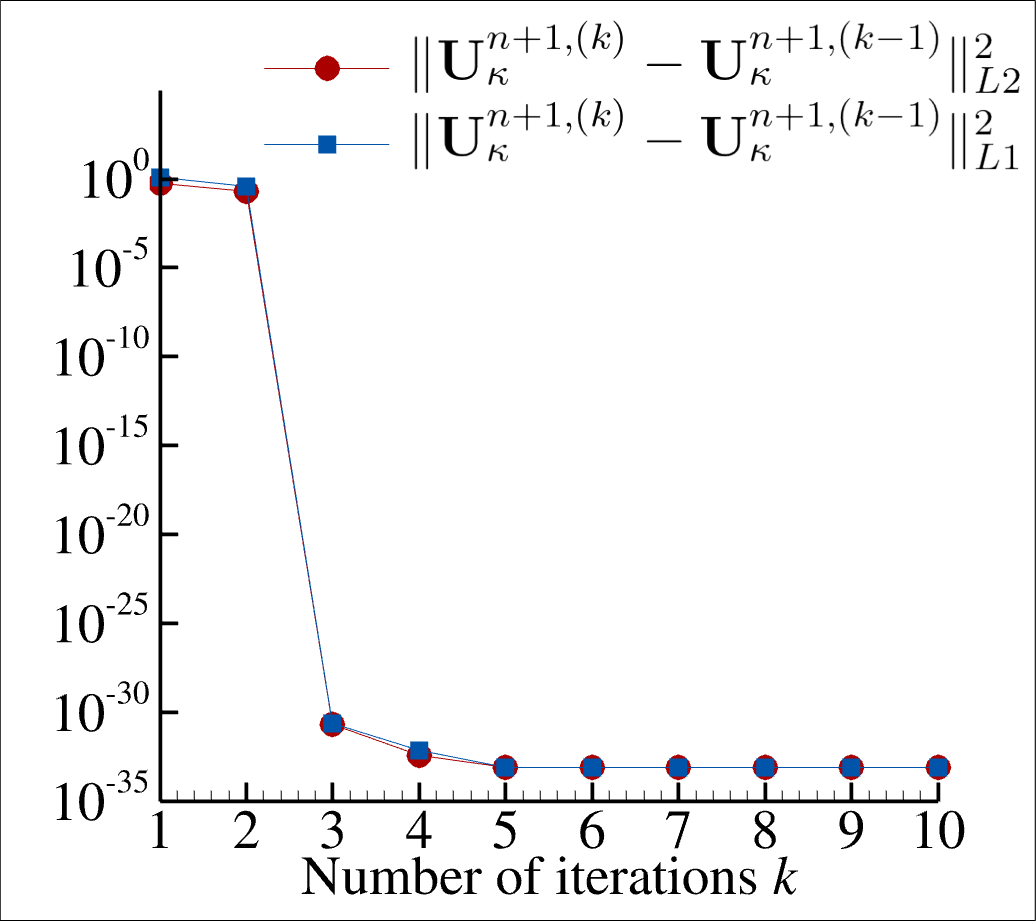}
        \end{minipage}
    }
    \hfill
    \subfloat[${\bf u}_{\HO}^{n+1} \notin \D$, $\beta=1$]{
        \begin{minipage}{0.3\linewidth}
            \centering
            \includegraphics[trim={0.15cm 0.15cm 0.15cm 0.15cm},clip,width=\linewidth]{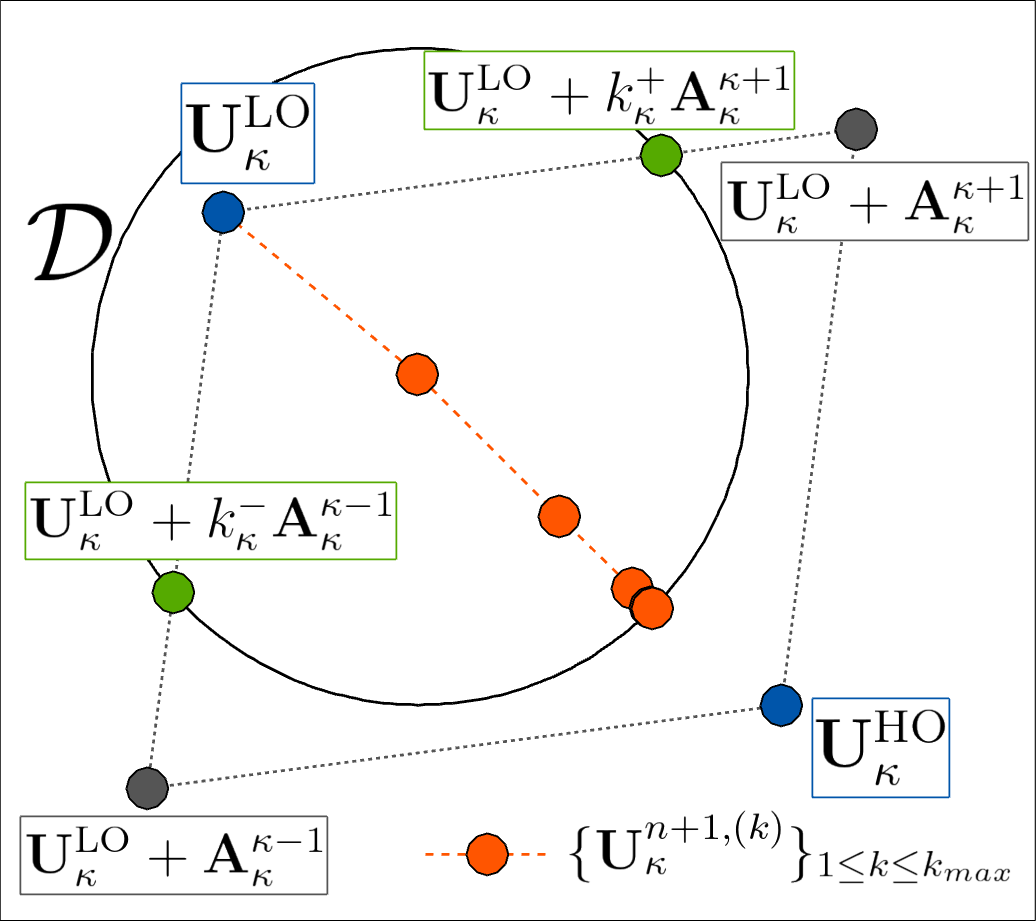}
            \includegraphics[trim={0.15cm 0.15cm 0.15cm 0.15cm},clip,width=\linewidth]{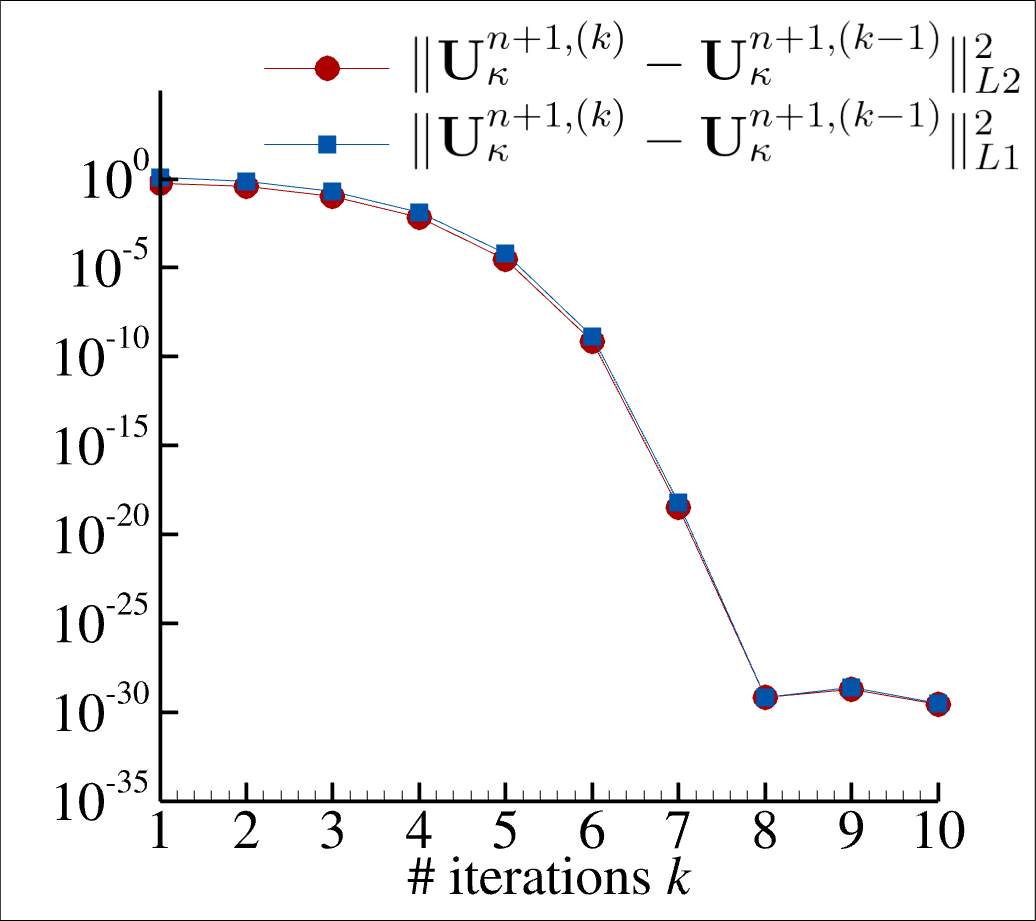}
        \end{minipage}
    }

    \caption{Top row: Schematic illustration of \cref{algo:iterative_IDP_limiter} in a simplified setting with $d=1$, $n_{eq}=2$ and $\#{\cal N}_\kappa^i=2$, where the convex invariant domain $\D$ is a disk delimited by the circle. Bottom row: Evolution of the $L^1$ and $L^2$ distances between the limited solutions at iterations $k$ and $k-1$.}
    \label{fig:schematic_iterative_loop}
\end{figure}
\color{black}

\subsection{IDP constraints}\label{sec:IDP_constraints}

Here we discuss constraints in \cref{eq:tentative_limited_DOF_contrib} and how to compute the $k_\kappa^i$ coefficients. The constraints we discuss are not new and we recall them for the sake of completeness of the description of the limiter and of the numerical results. We, however, emphasize that we leverage the iterative character of the limiter to avoid line-searches to evaluate the $k_\kappa^i$ and rather use explicit formulas that may be less accurate, but also less expansive. For the sake of clarity, we here consider a generic constraint and look for $0\leq k\leq 1$ such that

\begin{equation*}
 {\bf U}_{\LO}+k{\bf A}=(1-k){\bf U}_{\LO}+k({\bf U}_{\LO}+{\bf A})\in\D.
\end{equation*}

By $m$ and $M$ in $\R$, we denote some generic lower and upper bounds. We will consider the following constraints in the numerical experiments in \cref{sec:num-xp}:

\begin{itemize}
 \item Bounds on some component of ${\bf u}$: $m<u<M$. Examples are the maximum principle for nonlinear scalar equations (see \cref{ex:scalar_eq}) with $m=\essinf_{{\bf x}\in\Omega}u_0({\bf x})$ and $M=\esssup_{{\bf x}\in\Omega}u_0({\bf x})$, or positivity of the density $u=\rho$ with $M=\infty$ and $0<m\ll1$ for the compressible Euler equations (see \cref{ex:Euler_eq}, we typically use $m=10^{-8}$ in the numerical experiments of \cref{sec:num-xp}). In this case, we impose
 \begin{equation*}
  k = \left\{\begin{array}{ll}
      \frac{m-U_{\LO}}{A_u} & \text{if } U_{\LO}+A_u<m, \\
      \frac{M-U_{\LO}}{A_u} & \text{if } M<U_{\LO}+A_u, \\
      1 & \text{else,}
  \end{array}\right.
 \end{equation*}

\noindent with $A_u$ the component in ${\bf A}$ corresponding to $u$ in ${\bf u}$.

 \item Lower bound on concave functions of ${\bf u}$: $f({\bf u})>m$, where $\partial_{{\bf u}{\bf u}}^2f({\bf u})$ is negative semi-definite. An example is the internal energy for the compressible Euler equations (see \cref{ex:Euler_eq}). Using concavity of $f(\cdot)$, it is enough to impose $f({\bf U}_{\LO}+k{\bf A})\geq (1-k)f({\bf U}_{\LO})+kf({\bf U}_{\LO}+{\bf A})=m$ for what we obtain
 \begin{equation*}
  k = \max\Big(\min\Big(\frac{f({\bf U}_{\LO})-m}{f({\bf U}_{\LO})-f({\bf U}_{\LO}+{\bf A})}, 1\Big), 0\Big),
 \end{equation*}

\noindent and proceed the same way to impose an upper bound to a convex function, $f({\bf u})<M$.

 \item Bounds on specific entropies: given a convex entropy in \cref{eq:entropy_inequ} of the form $\eta({\bf u})$ with ${\bf u}=(\rho,\rho{\bf y}^\top)^\top$ and $\rho>0$ a positive state variable (e.g., the density for the compressible Euler or traffic flow equations, the water height in the shallow water equations), it is a classical matter that $\s({\bf w})=\s(\tau,{\bf y}^\top)=-\tau\eta(\tfrac{1}{\tau},\tfrac{1}{\tau}{\bf y}^\top)$ is a concave function. Likewise, the mapping ${\bf u}={\bf u}({\bf w})$ is one-to-one and for any given $0\leq \vartheta\leq1$, there exists a unique $0\leq k\leq1$ such that ${\bf U}_{\LO}+k{\bf A}={\bf u}\big((1-\vartheta){\bf w}({\bf U}_{\LO})+\vartheta{\bf w}({\bf U}_{\LO}+{\bf A})\big)$. An example is the specific entropy $(\tfrac{1}{\rho},{\bf w}^\top,E)\mapsto\s(\tfrac{1}{\rho},E-\tfrac{1}{2}{\bf w}\cdot{\bf w})$ for the compressible Euler equations \cite[Sec.~II]{godlewski_raviart_book1991} (see \cref{ex:Euler_eq}), thus giving ${\bf w}({\bf u})=(\tfrac{1}{\rho},{\bf v}^\top, E)^\top$ with ${\bf u}$ defined in \cref{eq:Euler}. Assuming that the LO scheme satisfies a minimum entropy principle, $\s\circ{\bf w}({\bf U}_{\LO})\geq m$, we compute $0\leq \vartheta\leq 1$ from 
  \begin{equation*}
  \vartheta = \max\Big(\min\Big(\frac{\s\circ{\bf w}({\bf U}_{\LO})-m}{\s\circ{\bf w}({\bf U}_{\LO})-\s\circ{\bf w}({\bf U}_{\LO}+{\bf A})}, 1\Big), 0\Big),
 \end{equation*}

 \noindent and then set $k=\tfrac{\vartheta\rho_{\LO}}{\rho_{\LO}+(1-\vartheta)A_\rho}$ with $A_\rho$ the first component in ${\bf A}$ corresponding to $\rho$ in ${\bf u}$. Note that the same results hold for scalar equations with $\rho\equiv\tau\equiv1$ and ${\bf u}\equiv{\bf v}\equiv u$.

 \item Discrete entropy inequalities: assume that the LO scheme satisfies some discrete entropy inequality for a given convex entropy $\eta({\bf u})$ of the form

\begin{equation*}
 \eta({\bf U}_{\LO}) \leq \eta({\bf U}_\kappa^{i,n}) - \frac{\Delta t^{(n)}}{M_\kappa^i}{\bf Q}_{\LO}({\bf u}_{\LO}) =: m \quad \forall \kappa\in\Omega_h, 1\leq i\leq N_p,
\end{equation*}

\noindent where ${\bf Q}_{\LO}$ corresponds to some conservative discretization of $\nabla\cdot{\bf q}({\bf u})$ in \cref{eq:entropy_inequ}. If the HO scheme fails to satisfy the same kind of entropy inequality, it is possible to use the RHS in the above equation as a bound by defining:

 \begin{equation*}
  k = \max\Big(\min\Big(\frac{\eta({\bf U}_{\LO})-m}{\eta({\bf U}_{\LO})-\eta({\bf U}_{\LO}+{\bf A})}, 1\Big), 0\Big),
 \end{equation*}

 \noindent which guarantees that $\eta({\bf U}_{\LO}+k{\bf A})\leq m$ and is always possible since $\eta({\bf U}_{\LO})\leq m$.

  \item Multiple bounds: it is sometimes beneficial to impose multiple bounds to the HO solution. This is done in a sequential way so as to ensure that all bounds can be effectively imposed. For instance, for the compressible Euler equations, one may first limit density, then limit the internal energy and finally impose the maximum principle on entropy. In this sequence, one needs a positive density for the internal energy to be concave, then one needs positive density and internal energy for the physical entropy to be well defined.
 
\end{itemize}

%
%
\section{Time integration schemes}\label{sec:time_discretization}

We here describe how the IDP limiter introduced in \cref{sec:IDP_limiter} can be adapted to HO explicit and implicit time integration schemes. We pay attention to lower the cost of the limiter by limiting the computation of the LO scheme to a time integration with a single step. The precise definition of the antidiffusive fluxes depends on the choice of the space discretization and will be given in \cref{sec:FV-DG_IDP_limiters}.

\subsection{Runge-Kutta schemes}\label{sec:RK_schemes}

A general ${\cal S}$-stage RK method in Butcher form for the HO time integration of \cref{eq:generic-semi-discr-scheme} reads

\begin{subequations}\label{eq:RK_scheme}
\begin{align}
 {\bf u}_h^{(n,k)} &= {\bf u}_h^{n} - \Delta t^{(n)} {\bf M}^{-1}\sum_{l=1}^{k+\theta-1} a_{kl}{\bf R}_h({\bf u}_h^{(n,l)}) \quad \forall 1\leq k\leq {\cal S}, \label{eq:RK_scheme-a}\\
 {\bf u}_h^{n+1} &= {\bf u}_h^{n} - \Delta t^{(n)} {\bf M}^{-1}\sum_{l=1}^{{\cal S}} b_{l}{\bf R}_h({\bf u}_h^{(n,l)}),  \label{eq:RK_scheme-b}
\end{align}
\end{subequations}

\noindent with $\sum_{l=1}^{{\cal S}} b_{l}=1$ for time consistency. We here consider the cases of explicit RK methods for $\theta=0$ and diagonally implicit Runge-Kutta (DIRK) methods for $\theta=1$. Note that  by \cref{eq:RK_scheme-a} we have ${\bf u}_h^{(n,1)} = {\bf u}_h^{n}$ in the explicit case $\theta=0$. We define 

\begin{equation}\label{eq:def-ck-coeff-in-RKM}
 c_k:=\sum_{l=1}^{k+\theta-1}a_{kl}\geq0 \quad \forall 1\leq k\leq {\cal S},
\end{equation}

\noindent and assume that they are nonnegative without loss of generality, this property being satisfied by most of the RK methods. 

Let us consider the limitation of the $k$th stage \cref{eq:RK_scheme-a}, the limitation of \cref{eq:RK_scheme-b} being similar. Given ${\bf u}_h^{n}$ and ${\bf u}_h^{(n,l)}$ in $\D$ for $1\leq l< k$, we aim at limiting the solution ${\bf u}_{\HO}^{(n,k)}$ of the HO scheme

\begin{equation}\label{eq:RK_stage_HO_scheme}
 {\bf u}_{\HO}^{(n,k)} = {\bf u}_h^{n} - \Delta t^{(n)} {\bf M}^{-1}\sum_{l=1}^{k+\theta-1} a_{kl}{\bf R}_{\HO}({\bf u}_h^{(n,l)}).
\end{equation}

\noindent with ${\bf u}_h^{(n,k)}={\bf u}_{\HO}^{(n,k)}$ when $\theta=1$. 

We now define a single-step LO scheme to reduce the cost of the limiter. In the explicit case, $\theta=0$, we apply the forward-Euler scheme, while we use the backward-Euler scheme in the implicit case, $\theta=1$, to define the quantity

\begin{equation}\label{eq:def-global-uLO-RKM}
  {\bf u}_{\LO}^{n+1} = {\bf u}_h^{n} - c_\infty \Delta t^{(n)} {\bf M}^{-1}{\bf R}_{\LO}({\bf u}_{\LO}^{n+\theta})
\end{equation}

\noindent with $c_\infty:=\max(c_{1\leq k\leq {\cal S}},1)$ and ${\bf u}_{\LO}^{n}={\bf u}_h^{n}$ when $\theta=0$. We assume that the LO scheme satisfies ${\bf u}_{\LO}^{n+1}\in\D$. Note that the time step should be limited in the explicit case and must satisfy $c_\infty\Delta t^{(n)}\leq\Delta t^{(n)}_{FE}$ where $\Delta t^{(n)}_{FE}$ corresponds to the CFL condition for \cref{eq:def-global-uLO-RKM} with $c_\infty=1$ and $\theta=0$. This may also be the case in the implicit case and depends on the choice of LO scheme and on the PDEs as discussed in \cref{sec:FV-DG_IDP_limiters}.


We then use the quantity ${\bf u}_{\LO}^{n+1}$ in \cref{eq:def-global-uLO-RKM} to evaluate the LO solution at the $k$th stage as follows

\begin{align}
 {\bf u}_{\LO}^{(n,k)}  &:= {\bf u}_h^{n} - \Delta t^{(n)} {\bf M}^{-1}\sum_{l=1}^{k+\theta-1} a_{kl}{\bf R}_{\LO}({\bf u}_{\LO}^{n+\theta}) \label{eq:RK_stage_LO_scheme}\\
 &\overset{\cref{eq:def-ck-coeff-in-RKM}}{=} {\bf u}_h^{n} - c_{k}\Delta t^{(n)} {\bf M}^{-1} {\bf R}_{\LO}({\bf u}_{\LO}^{n+\theta}) \nonumber\\
 &= \Big(1-\frac{c_k}{c_\infty}\Big){\bf u}_h^{n} + \frac{c_k}{c_\infty}\Big({\bf u}_h^{n} - c_\infty \Delta t^{(n)} {\bf M}^{-1}{\bf R}_{\LO}({\bf u}_{\LO}^{n+\theta})\Big) \nonumber\\ 
 &\overset{\cref{eq:def-global-uLO-RKM}}{=} \Big(1-\frac{c_k}{c_\infty}\Big){\bf u}_h^{n} + \frac{c_k}{c_\infty} {\bf u}_{\LO}^{n+1}, \nonumber
\end{align}

\noindent which is a convex combination of quantities in $\D$ and thus preserves all the invariant domains. Using $\sum_{l=1}^{{\cal S}} b_{l}=1$, the LO scheme for \cref{eq:RK_scheme-b} reads

\begin{equation*}
  \tilde{\bf u}_{\LO}^{n+1} = {\bf u}_h^{n} - \Delta t^{(n)} {\bf M}^{-1}{\bf R}_{\LO}({\bf u}_{\LO}^{n+\theta}) = \Big(1-\frac{1}{c_\infty}\Big){\bf u}_h^{n} + \frac{1}{c_\infty} {\bf u}_{\LO}^{n+1}\in\D.
\end{equation*}

Subtracting \cref{eq:RK_stage_LO_scheme} from \cref{eq:RK_stage_HO_scheme}, we obtain the equivalent form to \cref{eq:HO-LO-generic-scheme} with the antidiffusive fluxes:

\begin{equation*}
  \frac{M_\kappa^i}{\Delta t^{(n)}}\big({\bf U}_{\kappa,\HO}^{i,(n,k)} - {\bf U}_{\kappa,\LO}^{i,(n,k)}\big) = \sum_{(\kappa',j)\in{\cal N}_\kappa^i} {\bf A}_{\kappa,\kappa'}^{i,j,(n,k)} = \sum_{l=1}^{k+\theta-1}a_{kl}\big({\bf R}_{\LO}({\bf u}_{\LO}^{n+\theta})-{\bf R}_{\HO}({\bf u}_{\HO}^{(n,l)})\big).
\end{equation*}

In both explicit and implicit cases, we need only one computation of the LO solution \cref{eq:def-global-uLO-RKM} independently of the number of stages ${\cal S}$ and one evaluation of the LO residuals \cref{eq:RK_stage_LO_scheme}.

\subsection{Multistep methods}\label{sec:multistep_schemes}

We now consider Adams–Bashforth and Adams–Moulton ${\cal S}$-step methods of the form

\begin{equation}\label{eq:multistep_scheme}
 {\bf u}_h^{n+1} = {\bf u}_h^{n} - \Delta t^{(n)} {\bf M}^{-1}\sum_{l=0}^{\cal S} b_{l}{\bf R}_h({\bf u}_h^{n+1-l}),
\end{equation}

\noindent with $\sum_{l=0}^{\cal S} b_{l}=1$. The scheme is explicit when $b_0=0$ and corresponds to Adams–Bashforth methods, while it is implicit when $b_0\neq0$ and corresponds to Adams–Moulton methods.

The HO scheme reads

\begin{equation*}
 {\bf u}_{\HO}^{n+1} = {\bf u}_h^{n} - \Delta t^{(n)} {\bf M}^{-1}\Big(b_0{\bf R}_{\HO}({\bf u}_{\HO}^{n+1})+\sum_{l=1}^{\cal S} b_{l}{\bf R}_{\HO}({\bf u}_h^{n+1-l})\Big),
\end{equation*}

\noindent 

When $b_0=0$, we define the LO solution from the forward-Euler method

\begin{equation*}
 {\bf u}_{\LO}^{n+1} = {\bf u}_h^{n} - \Delta t^{(n)} {\bf M}^{-1}{\bf R}_{\LO}({\bf u}_h^{n}),
\end{equation*}

\noindent subject to the constraint $\Delta t^{(n)}\leq\Delta t^{(n)}_{FE}$, while we define it from a backward-Euler method in the implicit case $b_0\neq0$:

\begin{equation*}
 {\bf u}_{\LO}^{n+1} = {\bf u}_h^{n} - \Delta t^{(n)} {\bf M}^{-1}{\bf R}_{\LO}({\bf u}_{\LO}^{n+1}).
\end{equation*}

In both cases, we use $\sum_{l=0}^{\cal S} b_{l}=1$ to rewrite the LO scheme as 

\begin{equation*}
 {\bf u}_{\LO}^{n+1} = {\bf u}_h^{n} - \Delta t^{(n)} {\bf M}^{-1}{\bf R}_{\LO}({\bf u}_{\LO}^{n+\theta}) = {\bf u}_h^{n} - \Delta t^{(n)} {\bf M}^{-1}\sum_{l=0}^{\cal S} b_{l}{\bf R}_{\LO}({\bf u}_{\LO}^{n+\theta}),
\end{equation*}

\noindent with $\theta=0$ (resp., $\theta=1$) in the explicit (resp., implicit) case $b_0=0$ (resp., $b_0\neq0$). This way, we do not need to store multiple additional residuals and the equivalent form to \cref{eq:HO-LO-generic-scheme} reads

\begin{align*}
  \frac{M_\kappa^i}{\Delta t^{(n)}}\big({\bf U}_{\kappa,\HO}^{i,(n,k)} - {\bf U}_{\kappa,\LO}^{i,(n,k)}\big) = \sum_{(\kappa',j)\in{\cal N}_\kappa^i} {\bf A}_{\kappa,\kappa'}^{i,j,(n,k)} &= b_0\Big({\bf R}_{\LO}({\bf u}_{\LO}^{n+\theta})-{\bf R}_{\HO}({\bf u}_{\HO}^{n+1})\Big) \\ &+\sum_{l=1}^{\cal S} b_{l}\Big({\bf R}_{\LO}({\bf u}_{\LO}^{n+\theta})-{\bf R}_{\HO}({\bf u}_h^{n+1-l})\Big)
\end{align*}

\subsection{Discontinuous Galerkin method in time}\label{sec:time_DG_discretization}

Let us apply a discontinuous Galerkin (DG) discretization in time of the semi-discrete scheme \cref{eq:generic-semi-discr-scheme}. We look for a numerical solution in the function space of piecewise polynomials of degree $q\geq1$ in time:

\begin{equation*}
 {\bf u}_h(\cdot,t) \equiv \sum_{r=0}^{q}\phi_n^r(t) {\bf u}_h^{(n,r)}(\cdot) \quad \forall t\in(t^{(n)},t^{(n+1)}),
\end{equation*}

\noindent with the $\phi_n^{0\leq r\leq q}(t)$ spanning the space of polynomials of degree $q$ over $(t^{(n)},t^{(n+1)})$. The HO time DG discretization of \cref{eq:generic-semi-discr-scheme} reads \cite{vegt_ven_st_DG_2002,Barth_st_DG_2000}:

\begin{multline*}
 \phi_n^k(t^{(n+1)^-}){\bf u}_{\HO}(t^{(n+1)^-}) - \phi_n^k(t^{(n)^+}){\bf u}_h(t^{(n)^-}) - \Delta t^{(n)}\sum_{r=0}^q \omega_rd_t\phi_n^k(t_n^r){\bf u}_{\HO}(t_n^r) = \\ -\Delta t^{(n)}{\bf M}^{-1}\sum_{r=0}^{q}\omega_r\phi_n^k(t_n^r){\bf R}_{\HO}\big({\bf u}_{\HO}(t_n^r)\big) \quad \forall 0\leq k\leq q,
\end{multline*}

\noindent where we have used an upwind numerical flux at time interfaces $t^{(n)}$ and $t^{(n+1)}$ to satisfy causality, while we have approximated the integrals over $(t^{(n)},t^{(n+1)})$ with some quadrature rule over $[0,1]$ with nodes $(t_n^{0\leq r\leq q})$ and weights $(\omega_{0\leq r\leq q})$ satisfying $\sum_{r=0}^{q}\omega_r=1$. We again assume that the final solution at preceding time step is IDP: ${\bf u}_h(t^{(n)^-})\in\D$.

Setting $\phi_n^k\equiv1_{(t^{(n)},t^{(n+1)})}$ into the above equation, we obtain

\begin{equation*}
 {\bf u}_{\HO}(t^{(n+1)^-}) - {\bf u}_h(t^{(n)^-}) = -\Delta t^{(n)}{\bf M}^{-1}\sum_{r=0}^{q}\omega_r{\bf R}_{\HO}\big({\bf u}_{\HO}(t_n^r)\big).
\end{equation*}

Once again, we define the LO scheme as the solution to the backward-Euler scheme

\begin{align*}
 {\bf u}_{\LO}^{n+1} &= {\bf u}_h(t^{(n)^-}) -\Delta t^{(n)}{\bf M}^{-1}{\bf R}_{\LO}\big({\bf u}_{\LO}^{n+1}\big) \\ 
 &= {\bf u}_h(t^{(n)^-}) -\Delta t^{(n)}{\bf M}^{-1}\sum_{r=0}^{q}\omega_r{\bf R}_{\LO}\big({\bf u}_{\LO}^{n+1}\big),
\end{align*}

\noindent and the antidiffusive fluxes can be defined from the following equivalent form to \cref{eq:HO-LO-generic-scheme}

\begin{align*}
 \frac{1}{\Delta t^{(n)}}{\bf M}\big({\bf u}_{\HO}(t^{(n+1)^-}) - {\bf u}_{\LO}^{n+1}\big) &= \sum_{(\kappa',j)\in{\cal N}_\kappa^i} {\bf A}_{\kappa,\kappa'}^{i,j,n+1} \\
 &= \sum_{r=0}^{q}\omega_r\Big({\bf R}_{\LO}\big({\bf u}_{\LO}^{n+1}\big)-{\bf R}_{\HO}\big({\bf u}_{\HO}(t_n^r)\big)\Big).
\end{align*}

%
%
\section{Derivation of the antidiffusive fluxes}\label{sec:FV-DG_IDP_limiters}

The definition of antidiffusive fluxes depends on the choice of the discretization method. We here describe the antidiffusive fluxes for FV and DG space discretizations that we will use in the numerical experiments of \cref{sec:num-xp} and refer the reader to \cite{kuzmin_turek_FCT_02,KUZMIN_FCT_09,Guermond_etal_IDP_conv_lim_18,Guermond_etal_IDP_conv_lim_19} for more general derivations and the application to other discretization schemes. We restrict ourself to first-order time stepping for the sake of clarity and refer to \cref{sec:time_discretization} for the application of the limiter to various time discretization methods. As in \cref{sec:IDP_limiter}, $\theta=0$ (resp., $\theta=1$) will refer to the forward (resp., backward) Euler time integration and ${\bf u}_{\HO}^{n}={\bf u}_{\LO}^{n}={\bf u}_{h}^{n}$.


%
\subsection{Finite volume schemes}\label{sec:FV_schemes}

We consider unstructured HO FV schemes of the form

\begin{align}
 \frac{|\kappa|}{\Delta t^{(n)}}\big({\bf U}_{\kappa,\HO}^{n+1}-{\bf U}_{\kappa}^{n}\big) + {\bf R}_{\kappa,\HO}({\bf u}_{\HO}^{n+\theta}) &= 0, \label{eq:HO_FV_scheme}\\
 {\bf R}_{\kappa,\HO}({\bf u}_h) &= \sum_{e\in\partial\kappa}\sum_{k=1}^{N_q^e} \omega_e^k|e| {\bf h}\big({\bf u}_h^-({\bf x}_e^k,t),{\bf u}_h^+({\bf x}_e^k,t),{\bf n}_e\big), \nonumber
\end{align}

\noindent where ${\bf U}_{\kappa}^{n}$ approximates the averaged solution of the exact PDE solution ${\bf u}$ in the cell $\kappa$ at time $t^{(n)}$, ${\bf n}_e$ is the unit outward normal vector on the interface $e$ in $\partial\kappa$, and $\kappa_e^+$ the neighboring cell sharing the interface $e$ (see \cref{fig:stencil-FV-2D}). By $|e|$ and $|\kappa|$ we denote $d-1$ and $d$ dimensional measures of $e$ and $\kappa$, respectively, and $(\omega_e^k,{\bf x}_e^k)_{1\leq k\leq N_q^e}$ with $\sum_{k=1}^{N_q^e}\omega_e^k=1$ denotes some quadrature rule that may be required for the HO integration of the numerical flux over faces \cite{barth1990higher,gooch_vanaltena_HO_LSQ_02,haider_etal_k_exact_stab_09,pont_etal_k_exact_RANS_17}. We thus have $N_p=1$ and $M_\kappa^1=|\kappa|$ in \cref{eq:HO-generic-scheme}.

\begin{figure}[ht]
 \begin{center}
  \begin{tikzpicture}[scale=0.7]
   \draw (1.43,1.47) node {$\kappa$};
\draw (5.,1.65)   node {$\kappa_e^+$};
\draw (2.8,4.0)   node[above] {$e$};

\draw [>=stealth,->] (2.94,0.90) -- (3.9,0.95) ;
\draw (3.9,0.95) node[below right] {${\bf n}_e$};
		     
\draw [>=stealth,-] (-1.5,1.4) -- (3.,-1.) ;
\draw [>=stealth,-] (3.,-1.) -- (2.8,4.0) ;
\draw [>=stealth,-] (2.8,4.0) -- (-1.5,1.4) ;
\draw [>=stealth,-] (3.,-1.) -- (6.5,-0.5) ;
\draw [>=stealth,-] (6.5,-0.5) -- (7.5,4.) ;
\draw [>=stealth,-] (7.5,4.) -- (2.8,4.0) ;
  \end{tikzpicture}
  \caption{Notations for the FV mesh for $d=2$.}
  \label{fig:stencil-FV-2D}
 \end{center}
\end{figure}

The LO scheme \cref{eq:HO-generic-scheme} is a first-order scheme:

\begin{equation}\label{eq:LO_FV_scheme}
 \frac{|\kappa|}{\Delta t^{(n)}}\big({\bf U}_{\kappa,\LO}^{n+1}-{\bf U}_{\kappa}^{n}\big) + {\bf R}_{\kappa,\LO}({\bf u}_{\LO}^{n+\theta}) = 0, \quad 
 {\bf R}_{\kappa,\LO}({\bf u}_h) = \sum_{e\in\partial\kappa}|e| {\bf h}\big({\bf U}_{\kappa},{\bf U}_{\kappa_e^+},{\bf n}_e\big).
\end{equation}

Using the fact that $\sum_{k=1}^{N_q^e}\omega_e^k=1$, we define the antidiffusive fluxes in \cref{eq:HO-LO-generic-scheme} as

\begin{subequations}\label{eq:AD_fluxes_FVM}
\begin{align}
 \frac{|\kappa|}{\Delta t^{(n)}}\big({\bf U}_{\kappa,\HO}^{n+1}&-{\bf U}_{\kappa,\LO}^{n+1}\big) = \sum_{e\in\partial\kappa} {\bf A}_{\kappa,\kappa_{e}^+}^{e,n+\theta} \\
 {\bf A}_{\kappa,\kappa_{e}^+}^{e} &= \sum_{k=1}^{N_q^e}\omega_e^k|e|\Big( {\bf h}({\bf U}_{\kappa,\LO},{\bf U}_{\kappa_e^+,\LO},{\bf n}_e) - {\bf h}\big({\bf u}_{\HO}^-({\bf x}_e^k,t),{\bf u}_{\HO}^+({\bf x}_e^k,t),{\bf n}_e\big) \Big)=-{\bf A}_{\kappa_{e}^+,\kappa}^{e},
\end{align}
\end{subequations}

\noindent where we have adapted the notations for the antidiffusive fluxes to those of the FV schemes, but without ambiguity and where the skew symmetry property follows from conservation of the numerical fluxes: ${\bf h}({\bf u}^-,{\bf u}^+,{\bf n})=-{\bf h}({\bf u}^+,{\bf u}^-,-{\bf n})$.

The limited solution ${\bf u}_h^{(n+1)}$ in \cref{eq:HO-LO-generic-limited-scheme} finally reads

\begin{equation}\label{eq:IDP_limiter_FVM}
 \frac{|\kappa|}{\Delta t^{(n)}}\big({\bf U}_{\kappa}^{n+1}-{\bf U}_{\kappa,\LO}^{n+1}\big) = \sum_{e\in\partial\kappa} l_{\kappa,\kappa_{e}^+}^{e}{\bf A}_{\kappa,\kappa_{e}^+}^{e,n+\theta},
\end{equation}

\noindent $l_{\kappa,\kappa_{e}^+}^{e}$ the limiting coefficients defined in \cref{sec:limiting-coeffs}. 

The IDP property is satisfied by numerous time explicit, or some time implicit first-order FV schemes for systems of conservation laws under a suitable CFL condition on the time step \cite{Frid_idp_LF_01,Hoff_idp_85,TangXu00positivity} and unconditionally by time implicit monotone FV schemes for scalar equations \cite[\S~26]{EYMARD2000713}. However, proving the IDP property for time implicit discretizations for general systems without any condition on the time step remains an open problem and is beyond the scope of the present work. Past and present numerical experiments, however, show very good robustness and stability properties of first-order FV schemes.

\subsection{Discontinuous Galerkin schemes}\label{sec:DG_schemes}

We again consider a partition $\Omega_h$ of $\Omega$, composed of non-overlapping and non-empty elements $\kappa$, and look for approximate solutions in the function space of piecewise polynomials

\begin{equation*}
 {\cal V}_h^p = \{\phi\in L^2(\Omega_h):\;\phi|_{\kappa}\circ{\bf x}_\kappa\in{\cal P}^p(\hat{K})\; \forall\kappa\in \Omega_h\},
\end{equation*}

\noindent where ${\cal P}^{p}(\hat{K})$ is some polynomial space of polynomials of degree lower or equal to $p$ over a reference element $\hat{K}$ and $N_p$ is its dimension.  By $(\phi_\kappa^1,\dots,\phi_\kappa^{N_p})$, we denote a basis of ${\cal V}_h^p$ restricted onto $\kappa$. Each physical element $\kappa$ is the image of $\hat{K}$ through the mapping ${\bf x}={\bf x}_\kappa(\bxi)$, where $\bxi=(\xi_1,\dots,\xi_d)$ are coordinates in the reference element. Likewise, each face $e$ in the mesh is the image of a reference face $\hat{E}$ through the mapping ${\bf x}={\bf x}_e(\xi_1,\dots,\xi_{d-1})$. We further define Jacobians of the transformations by $J_\kappa({\bf x})=\big|{\bf x}_\kappa'(\bxi)\big|$ and $J_e({\bf x})=\big|{\bf x}_e'(\xi_1,\dots,\xi_{d-1})\big|$.

The approximate solution is sought under the form

\begin{equation}\label{eq:DG_num_sol}
 {\bf u}_h({\bf x},t) = \sum_{i=1}^{N_p} \phi_\kappa^{i}({\bf x}){\bf U}_\kappa^{i}(t) \quad \forall{\bf x}\in\kappa,\, \kappa\in \Omega_h,\,  t\geq0,
\end{equation}

\noindent and we define the cell-averaged solution from some quadrature rule $(\omega_\kappa^j,{\bf x}_{\kappa}^j)_{1\leq j\leq N_q^\kappa}$ over the cells 

\begin{equation*}
\langle {\bf u}_{h}\rangle_\kappa(t) = \frac{1}{|\kappa|}\sum_{i=1}^{N_q^\kappa} \omega_\kappa^i J_{\kappa}^i{\bf u}_h({\bf x}_{\kappa}^i,t),
\end{equation*}

\noindent where $J_\kappa^i=J_\kappa({\bf x}_\kappa^i)$. Using an orthonormal basis with respect to the internal product defined by this quadrature rule over each element, the first-order in time and HO in space DG scheme for the discretization of \cref{eq:hyp_sys_cons_laws-a} reads

\begin{subequations}\label{eqn:fully-discr_DG}
\begin{equation}
 M_\kappa^i \frac{{\bf U}_{\kappa,\HO}^{i,n+1}-{\bf U}_\kappa^{i,n}}{\Delta t^{(n)}} + {\bf R}_{\kappa,\HO}^{i}({\bf u}_{\HO}^{n+\theta}) = 0 \quad \forall \kappa\in \Omega_h, \; 1\leq i\leq N_p, \; n\geq 0,
\end{equation}

\noindent where ${\bf U}_\kappa^{i,n}={\bf U}_\kappa^{i}(t^{n})$ and ${\bf u}_{\HO}^{n}\equiv{\bf u}_{h}^{n}$ when $\theta=0$. The HO space discretization reads

\begin{equation}\label{eqn:weakDG}
 {\bf R}_{\kappa,\HO}^{i}({\bf u}_h) = 
    - \sum_{j=1}^{N_q^\kappa} \omega_\kappa^j J_{\kappa}^j {\bf f}\big({\bf u}_h({\bf x}_{\kappa}^j,t)\big)\cdot \nabla \phi_\kappa^i({\bf x}_{\kappa}^j) + \sum_{e\in\partial\kappa}\sum_{k=1}^{N_q^e} \phi_\kappa^i({\bf x}_e^k)\omega_e^k J_e^k {\bf h}\big({\bf u}_h^-({\bf x}_e^k,t),{\bf u}_h^+({\bf x}_e^k,t),{\bf n}_e^k\big),
\end{equation}
\end{subequations}

\noindent with ${\bf n}_e^k={\bf n}_e({\bf x}_e^k)$ the outward normal, ${\bf u}_h^\mp$ the inner and outer traces of the numerical solution at ${\bf x}_e^k$, $J_e^k=J_e({\bf x}_e^k)$, and $(\omega_e^k,{\bf x}_e^k)_{1\leq k\leq N_q^e}$ some quadrature rule over the face $e$ (see \cref{fig:stencil_2D_DGSEM}). For $\phi_\kappa^1\equiv1_\kappa$, the first sum in \cref{eqn:weakDG} vanishes and we obtain

\begin{equation}\label{eq:fully-discr_DG_average}
 \langle{\bf u}_{\HO}^{n+1}\rangle_\kappa = \langle{\bf u}_h^{n}\rangle_\kappa - \frac{\Delta t^{(n)}}{|\kappa|}\sum_{e\in\partial\kappa}\sum_{k=1}^{N_q^e} \omega_e^k J_e^k {\bf h}\big({\bf u}_{\HO}^-({\bf x}_e^k,t^{(n+\theta)}),{\bf u}_{\HO}^+({\bf x}_e^k,t^{(n+\theta)}),{\bf n}_e^k\big).
\end{equation}

When the test functions $(\phi_\kappa^{1\leq k\leq N_p})$ satisfy the partition of unity properties within elements and on faces and when using a Rusanov two-point numerical flux at interfaces, there exists a general way to derive the antidiffusive fluxes by rewriting the scheme in skew-symmetry form \cite[Sec.~4.3]{Guermond_etal_IDP_conv_lim_18}. 
We now propose a general procedure to derive antidiffusive fluxes for every conservative DG scheme that is also independent of the choice of the numerical flux. It is based on the approaches proposed in \cite[Sec.~4.3]{MRR_BEDGSEM_23} and \cite[Sec.~4]{renac_mpp_dgsem_nlsca_24}, where one defines antidiffusive fluxes for the cell-averaged scheme \cref{eq:fully-discr_DG_average}. The resulting antidiffusive fluxes are however restricted to limit the cell-averaged solution, but they can be associated with the linear scaling limiter introduced in \cite{zhang2010positivity,zhang_shu_10a} to impose these properties to the full solution within elements as will be described in \cref{sec:DGSEM_IDP_limiter} below.

We illustrate the procedure by using the first-order FV scheme \cref{eq:LO_FV_scheme} as the LO scheme (see \cref{sec:DGSEM_IDP_limiter} for another LO scheme using the same function space as the HO scheme), so the antidiffusive fluxes are similar to \cref{eq:AD_fluxes_FVM} for the FV scheme:

\begin{subequations}\label{eq:AD_fluxes_DGM}
\begin{align}
 \frac{|\kappa|}{\Delta t^{(n)}}\big(\langle{\bf u}_{\HO}^{n+1}\rangle_\kappa&-{\bf U}_{\kappa,\LO}^{n+1}\big) = \sum_{e\in\partial\kappa} {\bf A}_{\kappa,\kappa_{e}^+}^{e,n+\theta} \label{eq:AD_fluxes_DGM-a}\\
 {\bf A}_{\kappa,\kappa_{e}^+}^{e} &= \sum_{k=1}^{N_q^e}\omega_e^kJ_e^k\Big( {\bf h}({\bf U}_{\kappa,\LO},{\bf U}_{\kappa_e^+,\LO},{\bf n}_e) - {\bf h}\big({\bf u}_{h}^-({\bf x}_e^k,\cdot),{\bf u}_{h}^+({\bf x}_e^k,\cdot),{\bf n}_e^k\big) \Big)=-{\bf A}_{\kappa_{e}^+,\kappa}^{e}, \label{eq:AD_fluxes_DGM-b}
\end{align}
\end{subequations}

\noindent with ${\bf n}_e=\tfrac{1}{|e|}\sum_{k=1}^{N_q^e}\omega_e^kJ_e^k{\bf n}_e^k$ and $|e|:=\sum_{k=1}^{N_q^e}\omega_e^kJ_e^k$, which defines a FV grid with closed straight-sided elements: $\sum_{e\in\partial\kappa}|e|{\bf n}_e=\sum_{e\in\partial\kappa}\sum_{k=1}^{N_q^e}\omega_e^kJ_e^k{\bf n}_e^k=0$.


%
\subsection{Discontinuous Galerkin spectral element method}\label{sec:DGSEM_IDP_limiter}

We will focus on the DGSEM which is a particular case of DG method introduced above that uses Lagrange interpolation polynomials associated to Gauss-Lobatto quadrature nodes over quadrangles and hexahedra. We here give details on the scheme in 2D, where the solution is sought under the form

\begin{equation}\label{eq:DGSEM_num_sol}
 {\bf u}_h({\bf x},t)=\sum_{0\leq i,j\leq p}\phi_\kappa^{n_{ij}}({\bf x}){\bf U}_\kappa^{n_{ij}}(t) \quad \forall{\bf x}\in\kappa,\, \kappa\in \Omega_h,\, t\geq0,
\end{equation}

\noindent with $({\bf U}_\kappa^{n_{ij}})_{0\leq i,j\leq p}$ the DOFs in the element $\kappa$ with index $n_{ij}=1+i+jp$. The subset $(\phi_\kappa^{n_{ij}})_{0\leq i,j\leq p}$ is an elementwise basis of ${\cal V}_h^p$ of size $N_p=(p+1)^2$. Let $(\omega_p^i, \xi_p^i)_{0\leq i\leq p}$ be the $(p+1)$-point Gauss-Lobatto quadrature rule over $[-1,1]$ and let $(\ell_k)_{0\leq k\leq p}$ be the Lagrange interpolation polynomials associated to the Gauss-Lobatto quadrature nodes: $\ell_k(\xi_p^l)=\delta_{kl}$ for all $0\leq k,l \leq p$, with  $\delta_{kl}$ the Kronecker delta. We use tensor products of these polynomials, $\phi_\kappa^{n_{ij}}({\bf x})=\ell_i(\xi_1)\ell_j(\xi_2)$, therefore the DOFs correspond to the point values of the solution at the quadrature nodes ${\bf x}_\kappa^{n_{ij}}$: ${\bf U}_\kappa^{n_{ij}}(t)={\bf u}_h({\bf x}_\kappa^{n_{ij}},t)$ (see \cref{fig:stencil_2D_DGSEM}).

%
\begin{figure}[ht]
 \begin{center}
  \includegraphics[width=9cm]{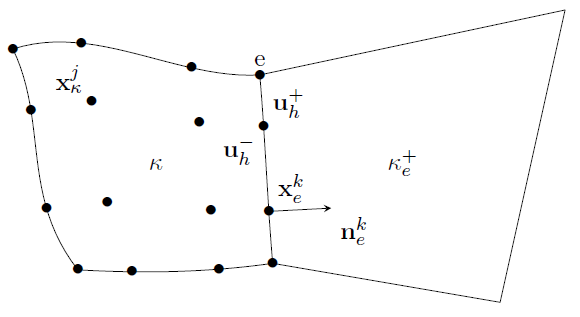}
  \caption{Notations for the DGSEM scheme for $d=2$: inner and outer elements, $\kappa^-$ and $\kappa_e^+$; definitions of traces ${\bf u}_h^\pm$ on the interface $e$ and of the unit outward normal vector ${\bf n}_e^k={\bf n}_e({\bf x}_e^k)$; positions of quadrature points in $\kappa$ and on $e$ for $p=3$ (bullets $\bullet$).}
  \label{fig:stencil_2D_DGSEM}
 \end{center}
\end{figure}

The scheme also uses symmetric entropy conservative two-point fluxes ${\bf h}_{ec}$ in place of the physical flux ${\bf f}({\bf u})$ in the space residuals \cref{eqn:weakDG} for ensuring semi-discrete entropy stability \cite{fisher_carpenter_13,gassner_13}, while the metric terms satisfy the metric identities \cite{kopriva_metric_id_06} to preserve uniform states. The space residuals in \cref{eqn:fully-discr_DG} read

\begin{align}\label{eq:semi-discr_DGSEM-res}
 {\bf R}_{\kappa,\HO}^{n_{ij}}({\bf u}_h)  &= 2\omega_p^i\omega_p^j\Big(\sum_{k=0}^p D_{ik}{\bf h}_{ec}\big({\bf U}_\kappa^{n_{ij}},{\bf U}_\kappa^{n_{kj}},{\bf n}_\kappa^{(i,k)j}\big) + \sum_{k=0}^p D_{jk}{\bf h}_{ec}\big({\bf U}_\kappa^{n_{ij}},{\bf U}_\kappa^{n_{ik}},{\bf n}_\kappa^{i(j,k)}\big)\Big) \nonumber \\
   &+ \sum_{e\in\partial\kappa}\sum_{k=0}^p \phi_\kappa^{n_{ij}}({\bf x}_e^k)\omega_p^kJ_e^k\Big({\bf h}\big({\bf U}_\kappa^{n_{ij}},{\bf u}_h^{+}({\bf x}_e^{k},t),{\bf n}_e^k\big)-{\bf f}({\bf U}_\kappa^{n_{ij}})\cdot{\bf n}_e^k\Big),
\end{align}

\noindent with $D_{ik}=\ell_k'(\xi_p^i)$ and 
\begin{equation*}
 {\bf n}_\kappa^{(i,k)j}=\frac{1}{2}\big(J_\kappa^{n_{ij}}\nabla\xi_1({\bf x}_\kappa^{n_{ij}})+J_\kappa^{n_{kj}}\nabla\xi_1({\bf x}_\kappa^{n_{kj}})\big), \quad {\bf n}_\kappa^{i(j,k)}=\frac{1}{2}\big(J_\kappa^{n_{ij}}\nabla\xi_2({\bf x}_\kappa^{n_{ij}})+J_\kappa^{n_{ik}}\nabla\xi_2({\bf x}_\kappa^{n_{ik}})\big).
\end{equation*}

In the numerical experiments of \cref{sec:num-xp_DG_RK}, we will add graph viscosity \cite{jameson_LED_95,guermond_popov_GV_16,Guermond_etal_IDP_conv_lim_18} to build a LO scheme as advocated in \cite{PAZNER_idg_DGSEM20211,MRR_BEDGSEM_23,renac_mpp_dgsem_nlsca_24} for the DGSEM. We thus use the following LO scheme instead of \cref{eq:LO_FV_scheme}:

\begin{subequations}\label{eqn:fully-discr_LO_DG}
\begin{equation}
 M_\kappa^{n_{ij}} \frac{{\bf U}_{\kappa,\LO}^{n_{ij},n+1}-{\bf U}_\kappa^{n_{ij},n}}{\Delta t^{(n)}} + {\bf R}_{\kappa,\HO}^{n_{ij}}({\bf u}_{\LO}^{n+\theta}) + {\bf V}_{\kappa}^{n_{ij}}({\bf u}_{\LO}^{n+\theta}) = 0 \quad \forall \kappa\in \Omega_h, \; 1\leq i\leq N_p, \; n\geq 0,
\end{equation}

\noindent with $M_\kappa^{n_{ij}}=\omega_p^i\omega_p^j J_\kappa^{n_{ij}}$, ${\bf u}_{\LO}^{n}\equiv{\bf u}_{h}^{n}$ when $\theta=0$, and

\begin{equation}
 {\bf V}_{\kappa}^{n_{ij}}({\bf u}_h) = 
    d_\kappa \omega_p^i\omega_p^j\sum_{k=0}^p\frac{\omega_p^k}{2}|{\bf n}_\kappa^{(ik)j}|({\bf U}_\kappa^{n_{ij}}-{\bf U}_\kappa^{n_{kj}})+\frac{\omega_p^k}{2}|{\bf n}_\kappa^{i(jk)}|({\bf U}_\kappa^{n_{ij}}-{\bf U}_\kappa^{n_{ik}}),
\end{equation}
\end{subequations}

\noindent with $d_\kappa\geq0$ the grap viscosity coefficient, so the antidiffusive fluxes in \cref{eq:AD_fluxes_DGM} read

\begin{subequations}\label{eq:AD_fluxes_DGSEM}
\begin{align}
 \frac{|\kappa|}{\Delta t^{(n)}}\big(\langle{\bf u}_{\HO}^{n+1}\rangle_\kappa&-\langle{\bf u}_{\LO}^{n+1}\rangle_\kappa\big) = \sum_{e\in\partial\kappa} {\bf A}_{\kappa,\kappa_{e}^+}^{e,n+\theta} \label{eq:AD_fluxes_DGM-a}\\
 {\bf A}_{\kappa,\kappa_{e}^+}^{e} &= \sum_{k=0}^{p}\omega_p^kJ_e^k\Big( {\bf h}\big({\bf u}_{\LO}^-({\bf x}_e^k,\cdot),{\bf u}_{\LO}^+({\bf x}_e^k,\cdot),{\bf n}_e^k\big) - {\bf h}\big({\bf u}_{\HO}^-({\bf x}_e^k,\cdot),{\bf u}_{\HO}^+({\bf x}_e^k,\cdot),{\bf n}_e^k\big) \Big). \label{eq:AD_fluxes_DGSEM-b} 
\end{align}
\end{subequations}

This strategy guarantees the cell-averaged solution to preserve the invariant domains. This is achieved by modifying the DOFs in the following way: first, observe that summing the following relation over $0\leq i,j\leq p$ gives \cref{eq:AD_fluxes_DGSEM}

\begin{equation*}
 \frac{M_\kappa^{n_{ij}}}{\Delta t^{(n)}}\big({\bf U}_{\kappa,\HO}^{n_{ij},n+1}-{\bf U}_{\kappa,\LO}^{n_{ij},n+1}\big) = \frac{M_\kappa^{n_{ij}}}{|\kappa|} \sum_{e\in\partial\kappa}  {\bf A}_{\kappa,\kappa_{e}^+}^{e,n+\theta},
\end{equation*}

\noindent then the DOFs of the limited solution ${\bf u}_h^{(n+1)}$ in \cref{eq:HO-LO-generic-limited-scheme} read

\begin{equation}\label{eq:IDP_limiter_DGSEM}
 \frac{|\kappa|}{\Delta t^{(n)}}\big({\bf U}_{\kappa}^{n_{ij},n+1}-{\bf U}_{\kappa,\HO}^{n_{ij},n+1}\big) = \sum_{e\in\partial\kappa} (l_{\kappa,\kappa_{e}^+}^{e}-1) {\bf A}_{\kappa,\kappa_{e}^+}^{e,n+\theta},
\end{equation}

\noindent with $l_{\kappa,\kappa_{e}^+}^{e}$ the limiting coefficients defined in \cref{sec:limiting-coeffs}. 

As proposed in \cite{MRR_BEDGSEM_23,renac_mpp_dgsem_nlsca_24}, one finally applies the linear scaling limiter introduced in \cite{zhang2010positivity,zhang_shu_10a} to impose these properties to the solution at nodal values within elements:

\begin{equation}\label{eq:pos_limiter}
 \tilde{\bf U}_\kappa^{n_{ij},n+1} = \theta_\kappa{\bf U}_\kappa^{n_{ij},n+1} + (1-\theta_\kappa)\langle{\bf u}_h^{(n+1)}\rangle_\kappa, \quad 0\leq i,j\leq p, \quad \kappa \in  \Omega_h,
\end{equation}

\noindent where there always exists $0\leq\theta_\kappa\leq1$ such that all the DOFs $\tilde{\bf U}_\kappa^{n_{ij},n+1}$ are in $\D$ since $\langle{\bf u}_h^{(n+1)}\rangle_\kappa\in\D$ after the IDP limiter \cref{eq:IDP_limiter_DGSEM}.


%
%
\section{Numerical experiments}\label{sec:num-xp}

In this section we present numerical experiments on problems in one and two space dimensions. The objective is here to illustrate the properties of the limiter introduced in \cref{sec:IDP_limiter,sec:time_discretization,sec:FV-DG_IDP_limiters} when applied to several numerical schemes in space and time. For that purpose, we consider finite volume schemes with explicit or implicit Runge-Kutta time marching in \cref{sec:num-xp_FV_RK} and DGSEM schemes with implicit Runge-Kutta time marching in \cref{sec:num-xp_DG_RK}, or DGSEM as time integration in \cref{sec:num-xp_stDG}. 

Without stated otherwise, we use the iterative limiter as described in \cref{algo:iterative_IDP_limiter} with a low tolerance $\TOL=10^{-8}$ and a maximum number of iterations $k_{max}=50$ (see \cref{sec:acc_iterative_limiter,sec:num-xp_FV_RK} for numerical tests about the convergence of the iterative limiter). The nonlinear algebraic systems resulting from the space discretizations described in \cref{sec:FV-DG_IDP_limiters} of nonlinear problems are solved by using a quasi-Newton iterative algorithm. The full description of the numerical procedure is given below for each numerical scheme. The DG schemes and they limiter were implemented in the CFD code Aghora developed at ONERA  \cite{renac_etal15}.

\subsection{Finite volume schemes with Runge-Kutta integration}\label{sec:num-xp_FV_RK}

We here consider a FV scheme for the discretization of the 1D compressible Euler equations with a polytropic ideal gas law $\p = (\gamma-1)\rho e$, where $\gamma=\tfrac{7}{5}$ is the ratio of specific heats. The simulations are performed on a 1D uniform grid within the FV framework outlined in \cref{sec:FV_schemes}. We use a Rusanov two-point flux at cell interfaces \cite{Rusanov1961}. In the HO scheme \cref{eq:HO_FV_scheme}, the left and right traces of the solution at interfaces are reconstructed using a MUSCL-type approximation based on a piecewise parabolic reconstruction \cite[Sec.~10.4]{lohner_CFDbook_2008}, which ensures third-order spatial accuracy in smooth regions. To guarantee nonlinear stability and reduce spurious oscillations near discontinuities, a superbee limiter is applied to the reconstructed slopes. Explicit time integration is performed using the three-stage strong-stability-preserving Runge--Kutta scheme (ERK3) proposed by Shu and Osher \cite{shu-osher88}. For implicit time stepping, the three-stage diagonally implicit Runge--Kutta method (DIRK33) introduced by Alexander \cite{Alexander_DIRK_77} is employed, which is strongly $S$-stable, and consequently both $A$-stable and $S$-stable. In contrast, the LO scheme \cref{eq:LO_FV_scheme} utilizes either an explicit forward Euler scheme (ERK1), or a linearized backward-Euler implicit scheme. Interface states are evaluated without spatial reconstruction, which yields a robust first-order accurate scheme. The time step is computed so as to satisfy $\Delta t^{(n)}\tfrac{\lambda(\langle{\bf u}_{h}^n\rangle_\kappa)}{\text{diam}\,\kappa}\leq\CFL$ in all cells $\kappa\in\Omega_h$ with $\lambda({\bf u})=|{\bf v}|+\sqrt{\gamma\p/\rho}$. The limited scheme \cref{eq:AD_fluxes_FVM} is applied to ensure positivity of density and internal energy as well as a minimum principle on the specific entropy (see \cref{ex:Euler_eq}).

We consider the following Riemann problem with strong waves proposed in \cite{toro_book}:

\begin{equation}\label{eq:RP_Toro_pb3}
 {\bf u}(x,0)=\left\{ \begin{array}{ll} \begin{pmatrix} 1 & 0 & 2.5\times10^{3}\end{pmatrix}^\top & \text{if } x>0,  \\
    \begin{pmatrix} 1 & 0 & 2.5\times10^{-2}\end{pmatrix}^\top  & \text{if } x<0.
 \end{array} \right.
\end{equation}

Even with the superbee limiter, the HO scheme fails to complete the computation up to the final time due to the presence of a strong shock, which induces spurious oscillations and negative pressure values. In contrast, the limited scheme resolves all waves sharply when the time step is of the order of the wave speeds ($\CFL=1$), for both explicit and implicit time integration, as observed in \cref{fig:toro_pb3_FV_ERK,fig:toro_pb3_FV_DIRK_cfl1.0}. The scheme maintains robustness at larger time steps ($\CFL=5$), but results in reduced resolution and increased spurious oscillations, as highlighted in \cref{fig:toro_pb3_FV_DIRK_cfl5.0}.

\begin{figure}
\centering
\captionsetup[subfigure]{labelformat=empty}
\subfloat{\begin{picture}(0,0) \put(-15,30){\rotatebox{90}{LO scheme}} \end{picture}}
\subfloat{\includegraphics[width=4.5cm]{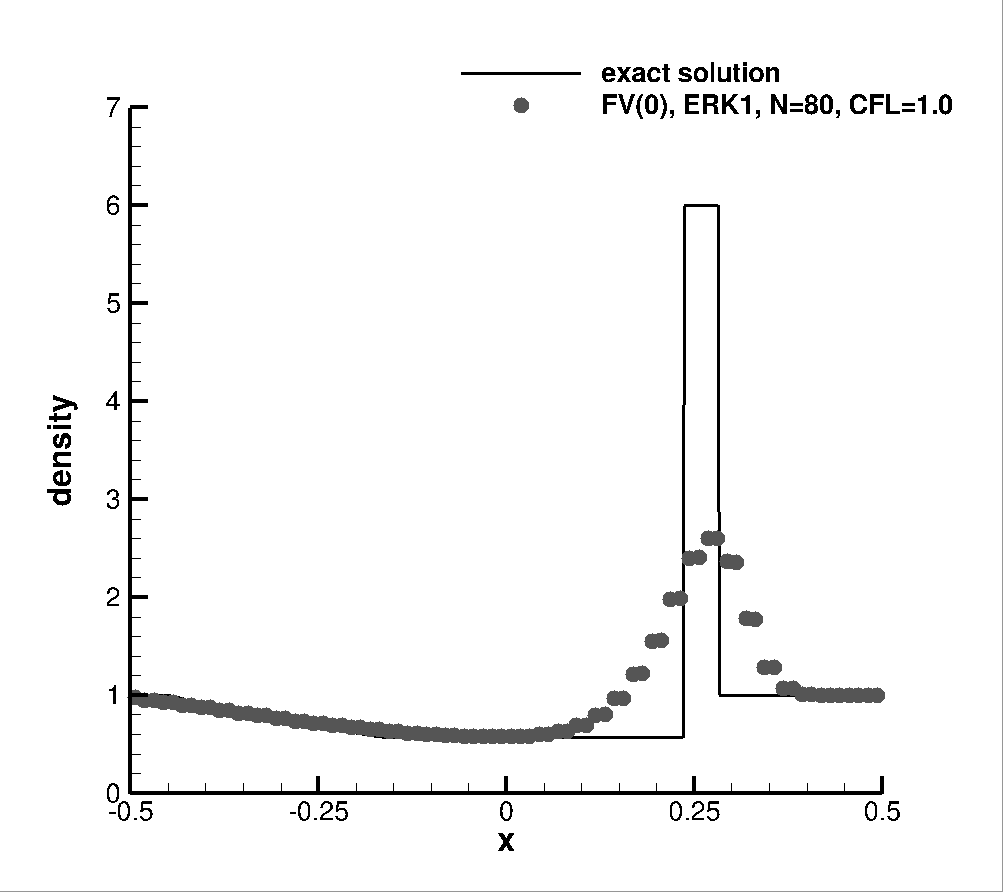}}
\subfloat{\includegraphics[width=4.5cm]{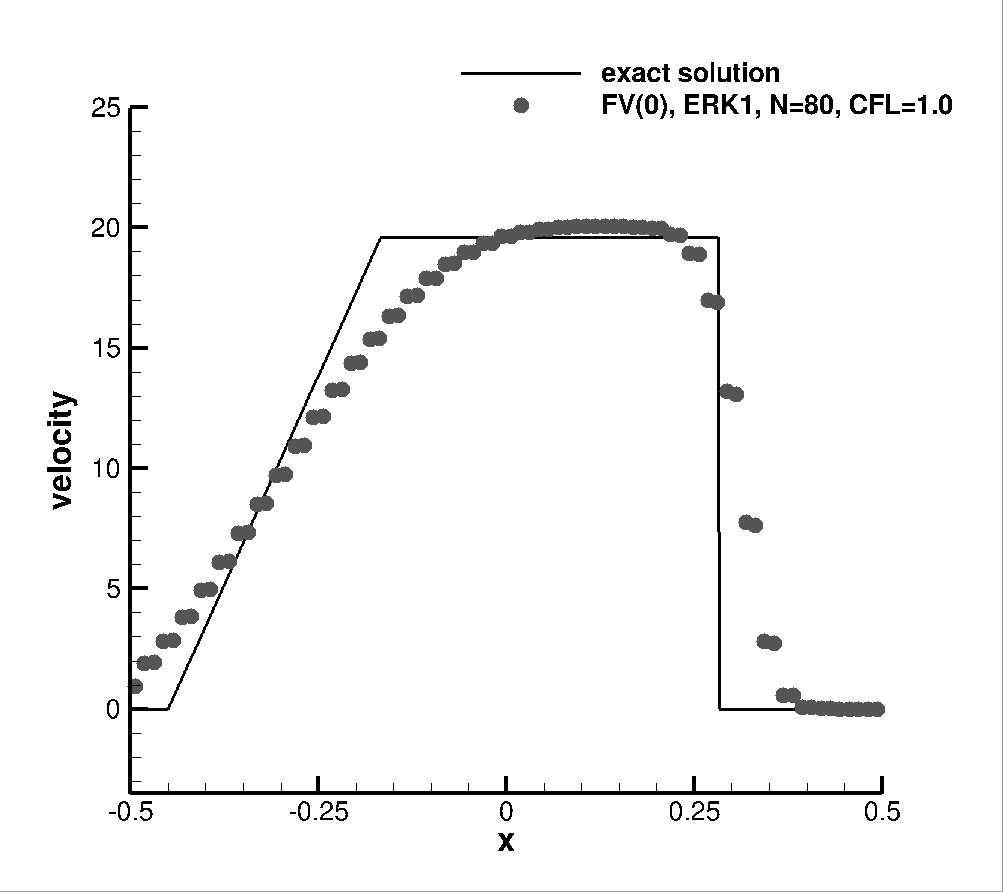}}
\subfloat{\includegraphics[width=4.5cm]{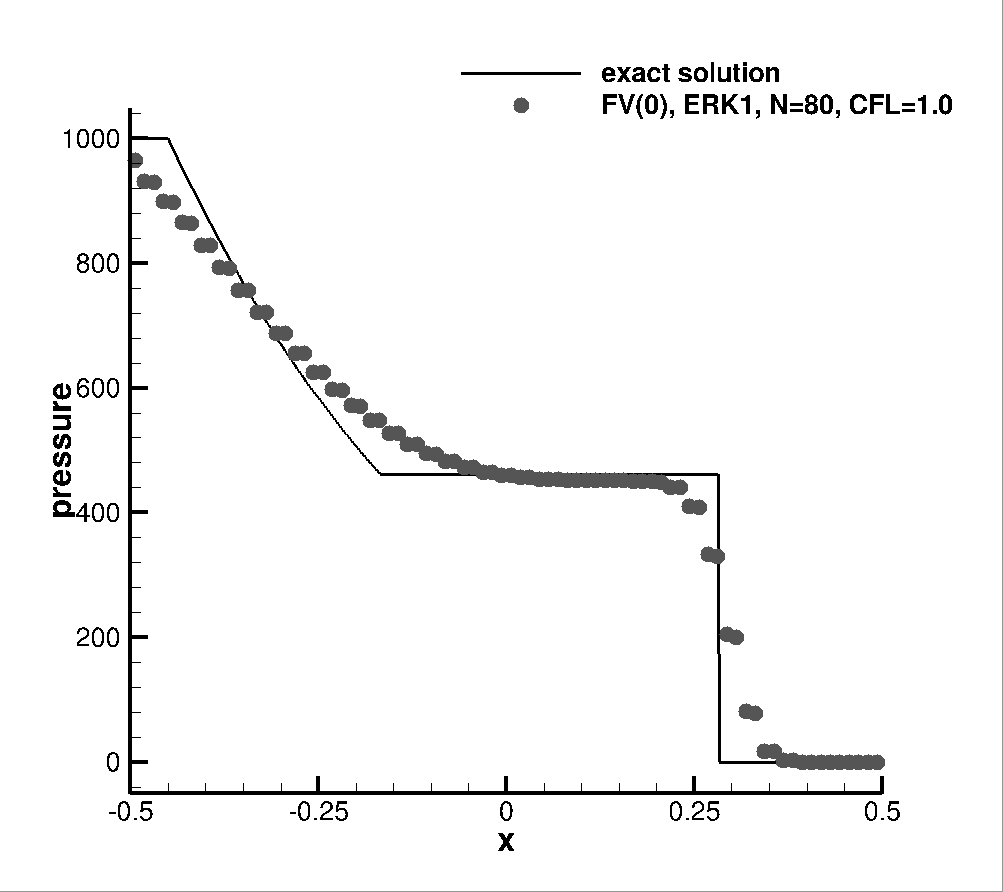}} \\
\subfloat{\begin{picture}(0,0) \put(-15,20){\rotatebox{90}{limited scheme}} \end{picture}}
\subfloat[density]{\includegraphics[width=4.5cm]{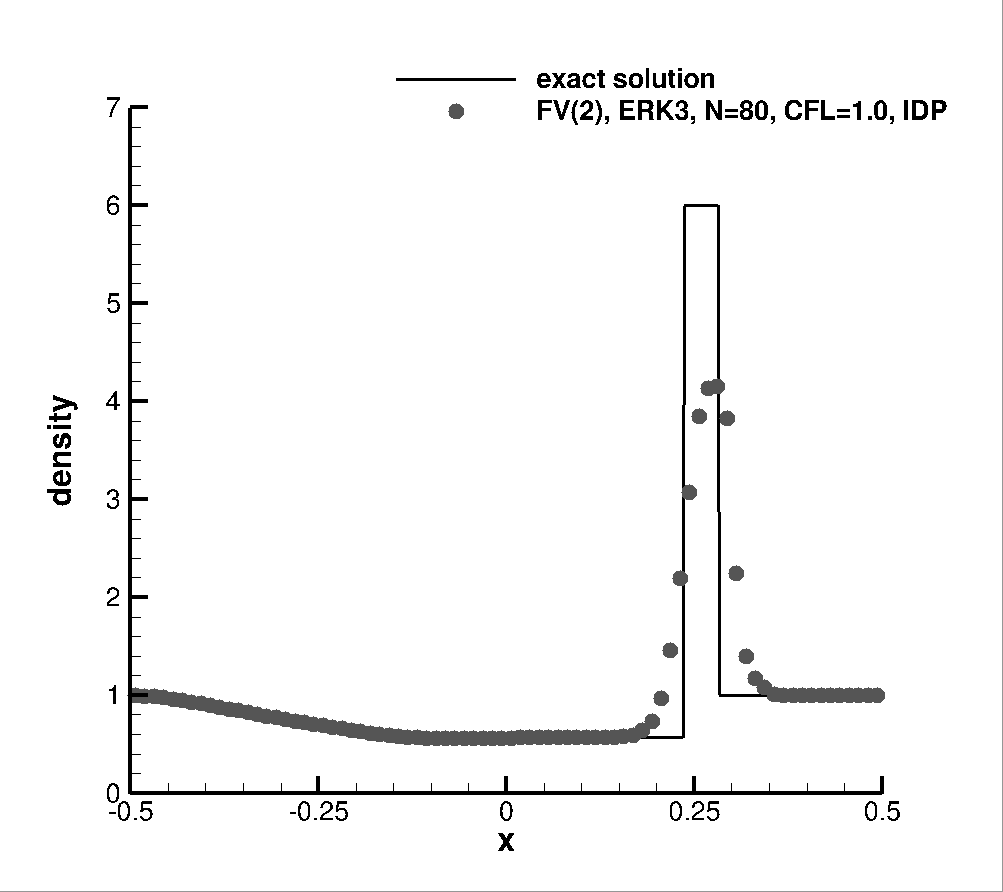}}
\subfloat[velocity]{\includegraphics[width=4.5cm]{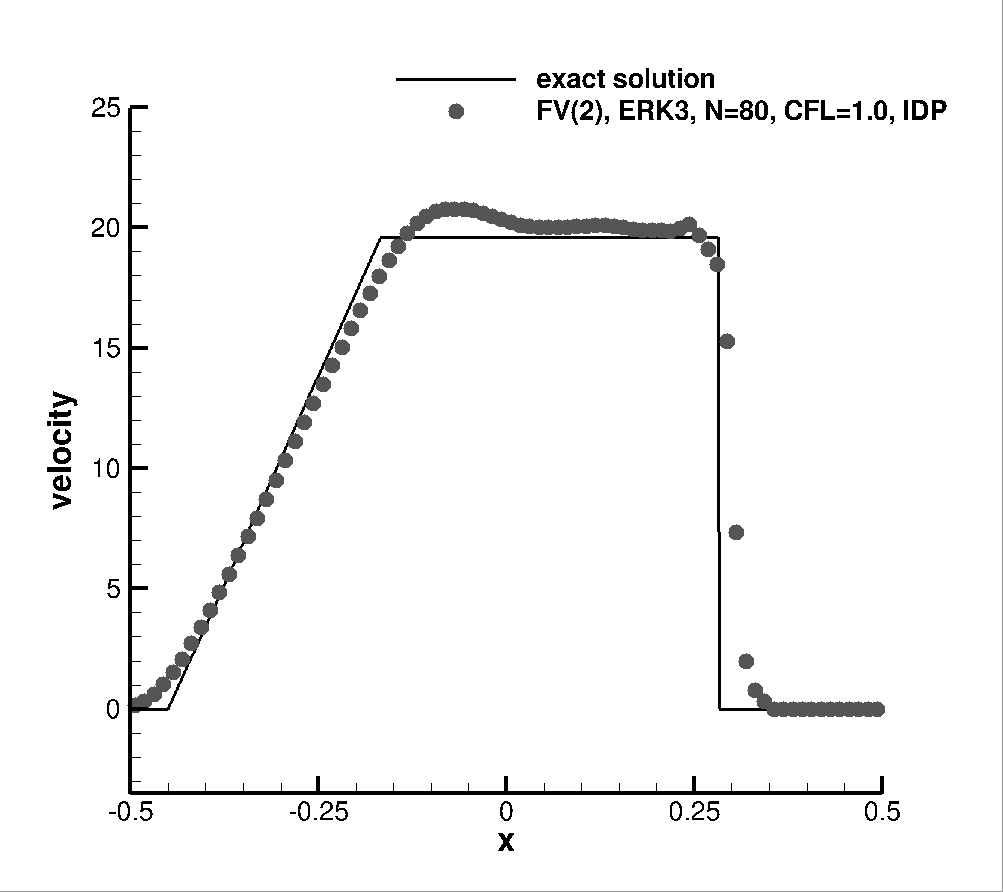}}
\subfloat[pressure]{\includegraphics[width=4.5cm]{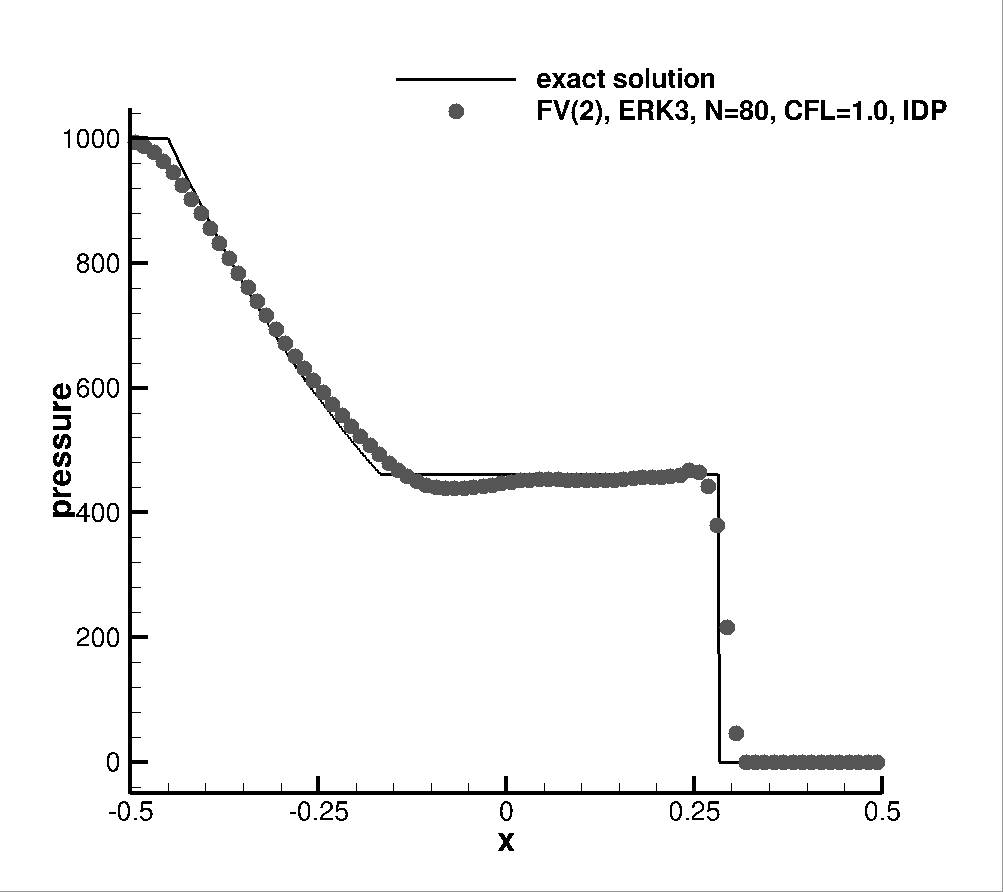}}
\caption{Riemann problem with initial data \cref{eq:RP_Toro_pb3}: explicit LO FV(0)-ERK1 (top) and limited FV(2)-ERK3 (bottom) solutions at $t=0.012$ on a uniform mesh with $N=80$ cells and $\CFL=1$. The DOFs (bullets) are compared to the exact solution (lines).}
\label{fig:toro_pb3_FV_ERK}
\end{figure} 

\begin{figure}
\centering
\captionsetup[subfigure]{labelformat=empty}
\subfloat{\begin{picture}(0,0) \put(-15,30){\rotatebox{90}{LO scheme}} \end{picture}}
\subfloat{\includegraphics[width=4.5cm]{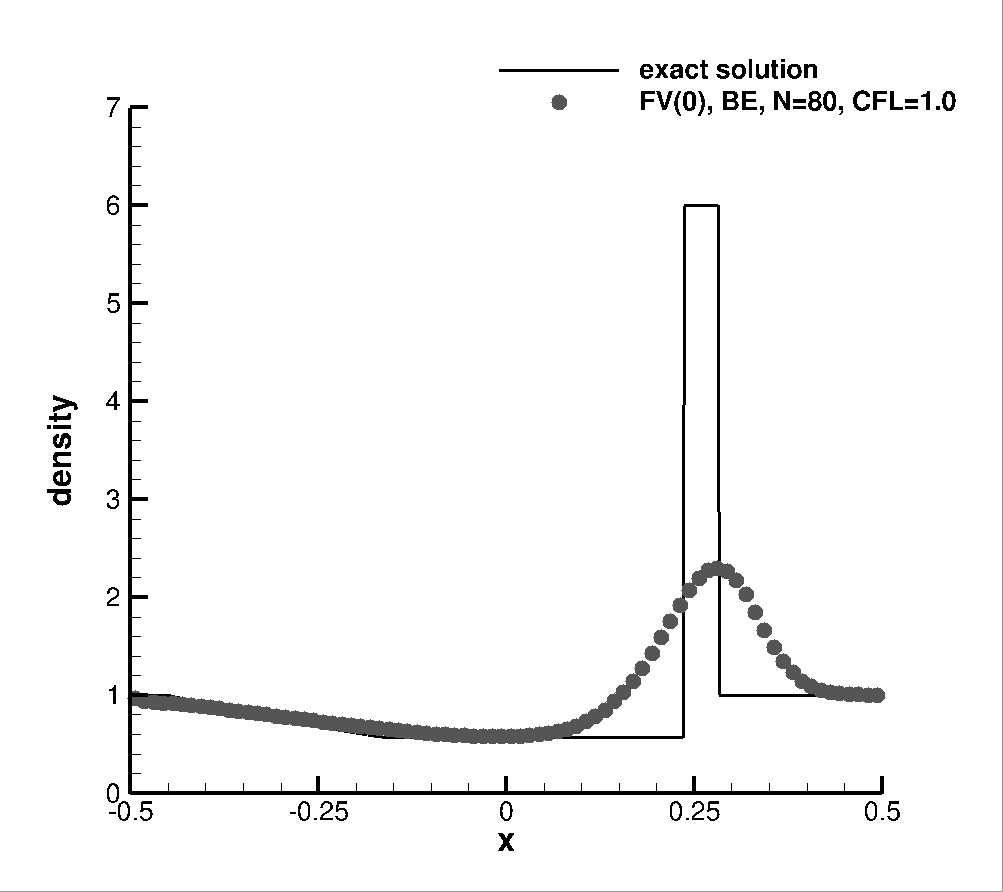}}
\subfloat{\includegraphics[width=4.5cm]{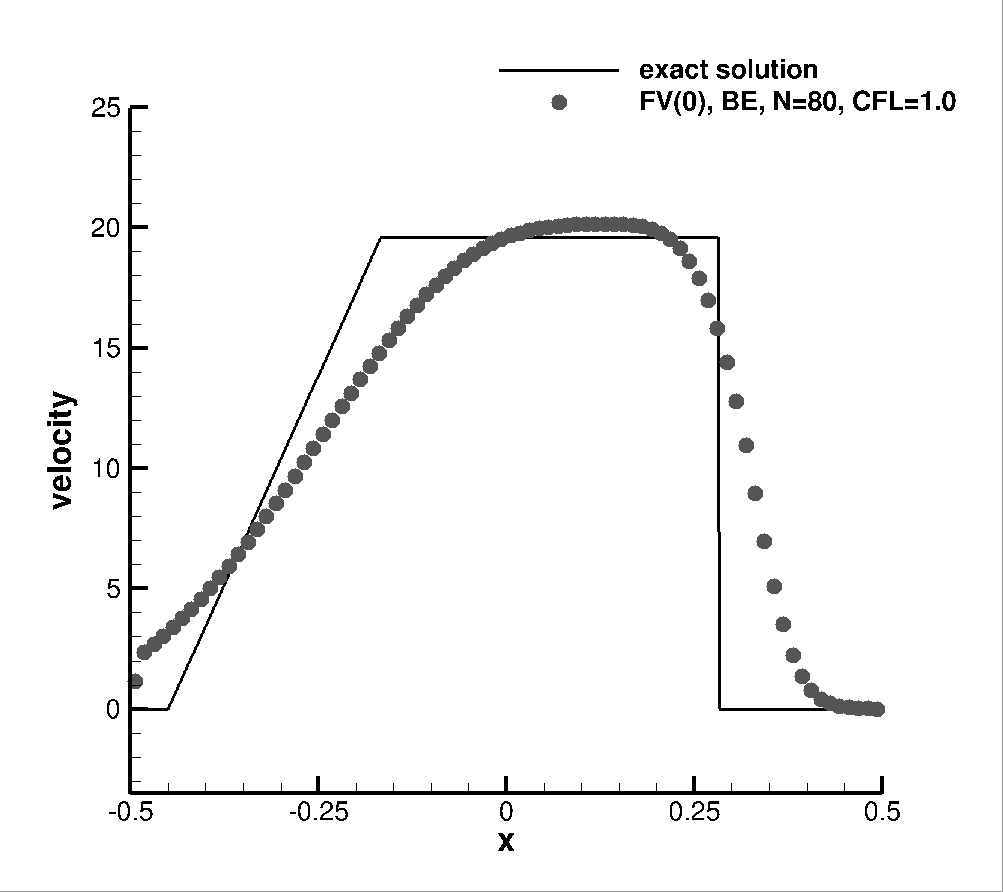}}
\subfloat{\includegraphics[width=4.5cm]{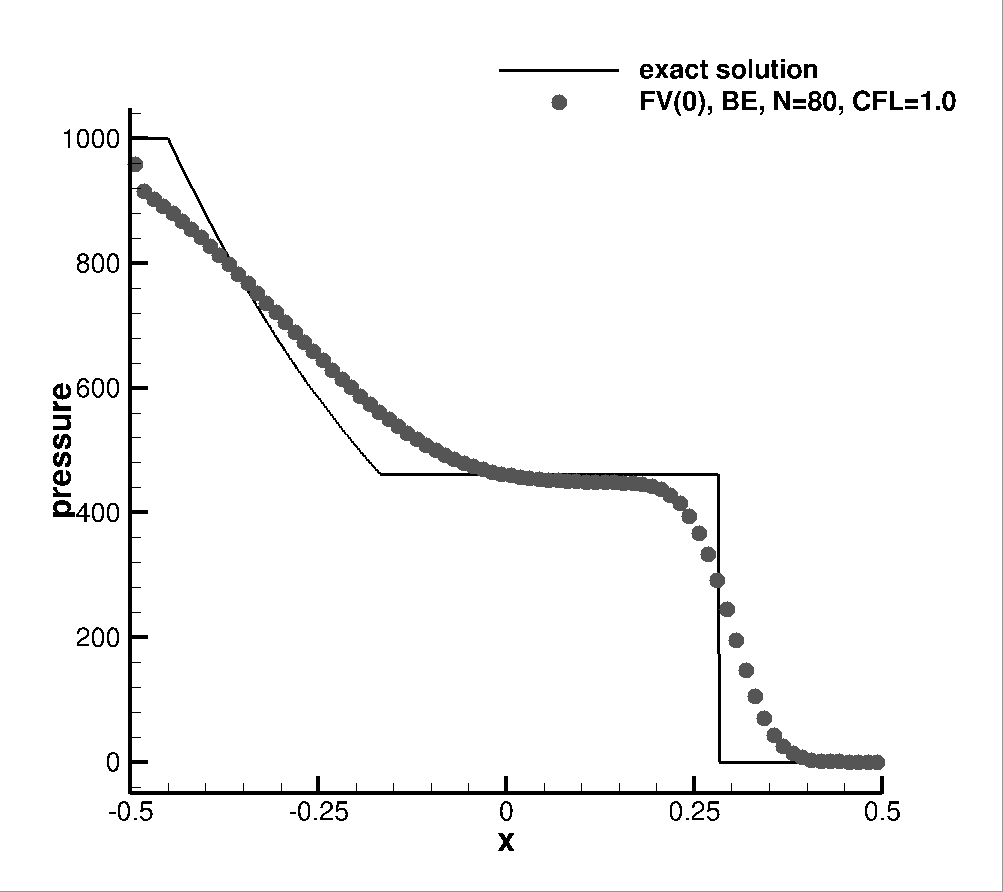}} \\
\subfloat{\begin{picture}(0,0) \put(-15,20){\rotatebox{90}{limited scheme}} \end{picture}}
\subfloat[density]{\includegraphics[width=4.5cm]{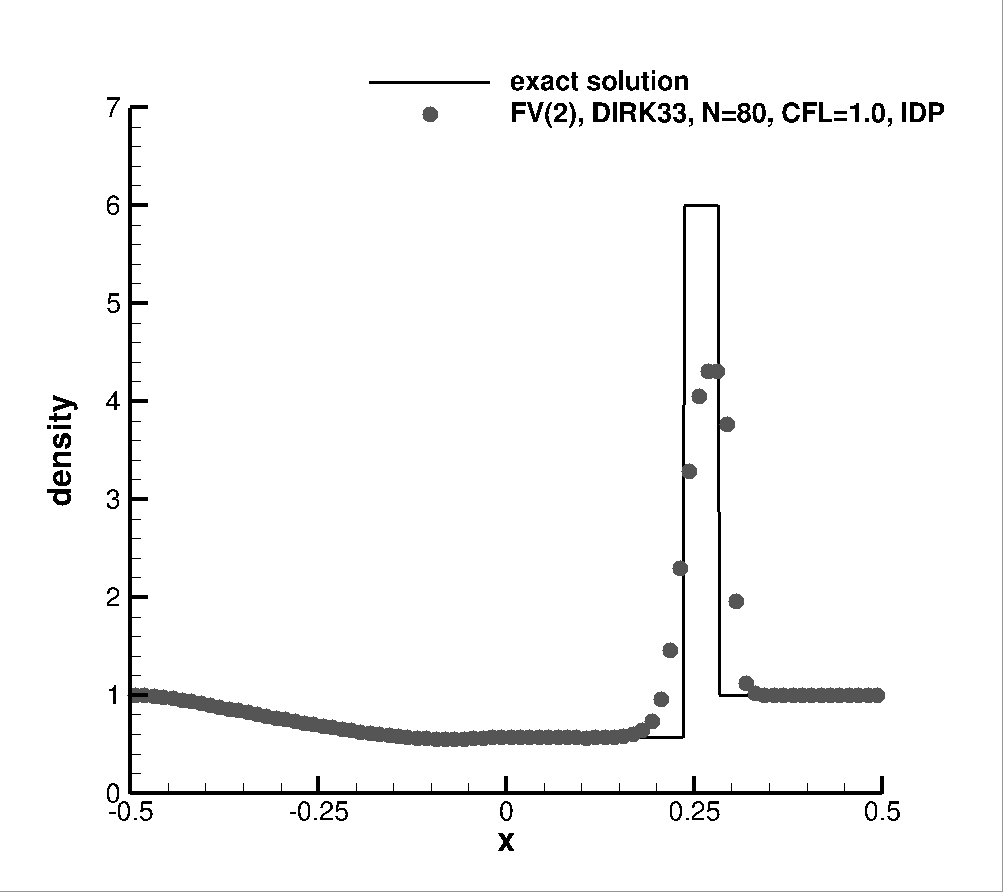}}
\subfloat[velocity]{\includegraphics[width=4.5cm]{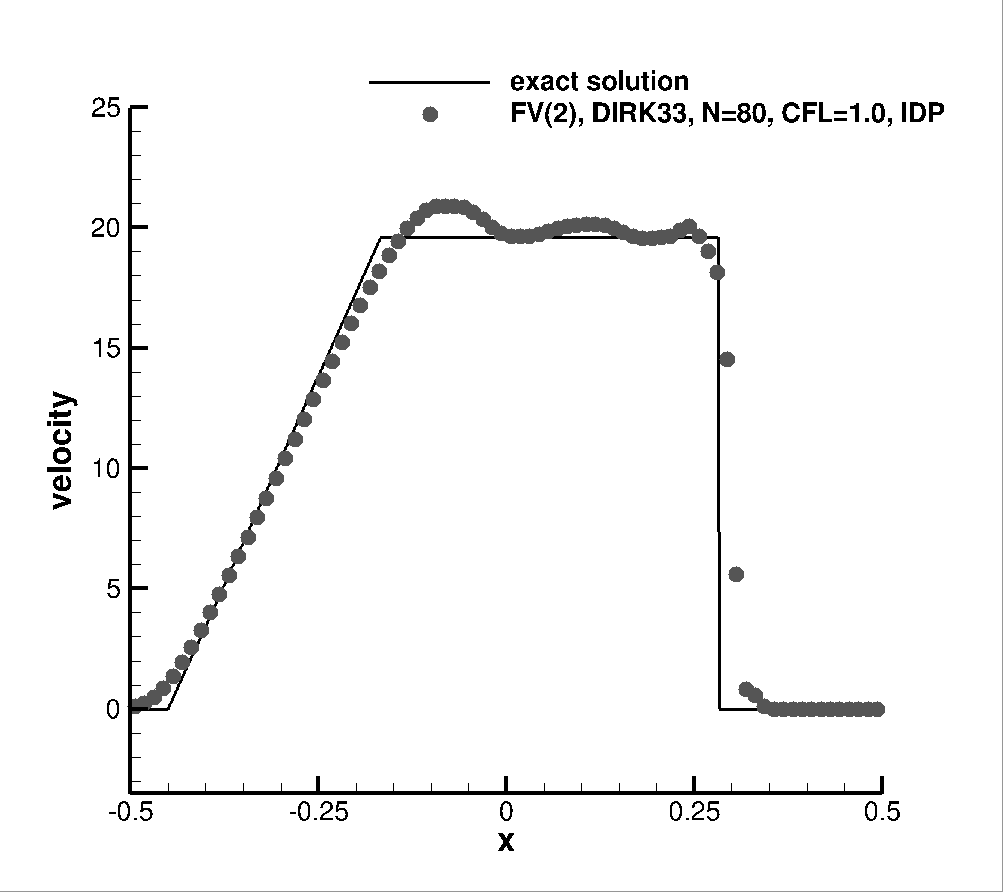}}
\subfloat[pressure]{\includegraphics[width=4.5cm]{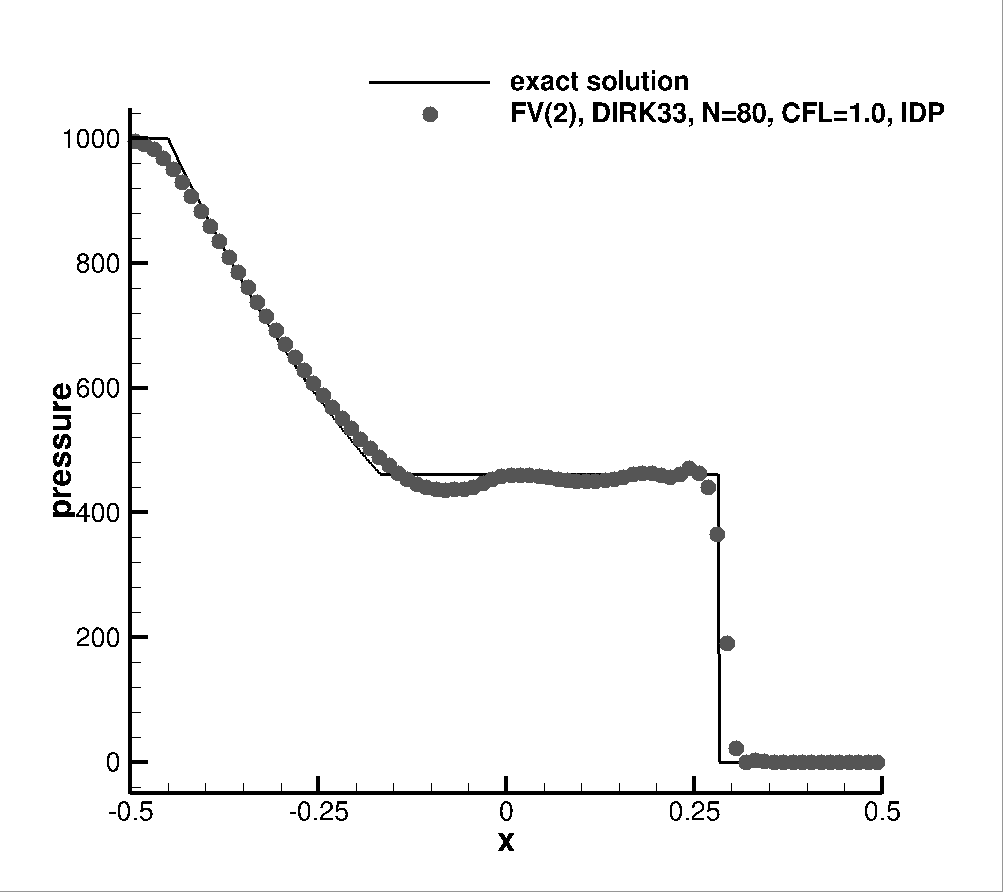}}
\caption{Riemann problem with initial data \cref{eq:RP_Toro_pb3}: implicit LO FV(0)-BE (top) and limited FV(2)-DIRK33 (bottom) solutions at $t=0.012$ on a uniform mesh with $N=80$ cells and $\CFL=1$. The DOFs (bullets) are compared to the exact solution (lines).}
\label{fig:toro_pb3_FV_DIRK_cfl1.0}
\end{figure}

\begin{figure}
\centering
\captionsetup[subfigure]{labelformat=empty}
\subfloat{\begin{picture}(0,0) \put(-15,30){\rotatebox{90}{LO scheme}} \end{picture}}
\subfloat{\includegraphics[width=4.5cm]{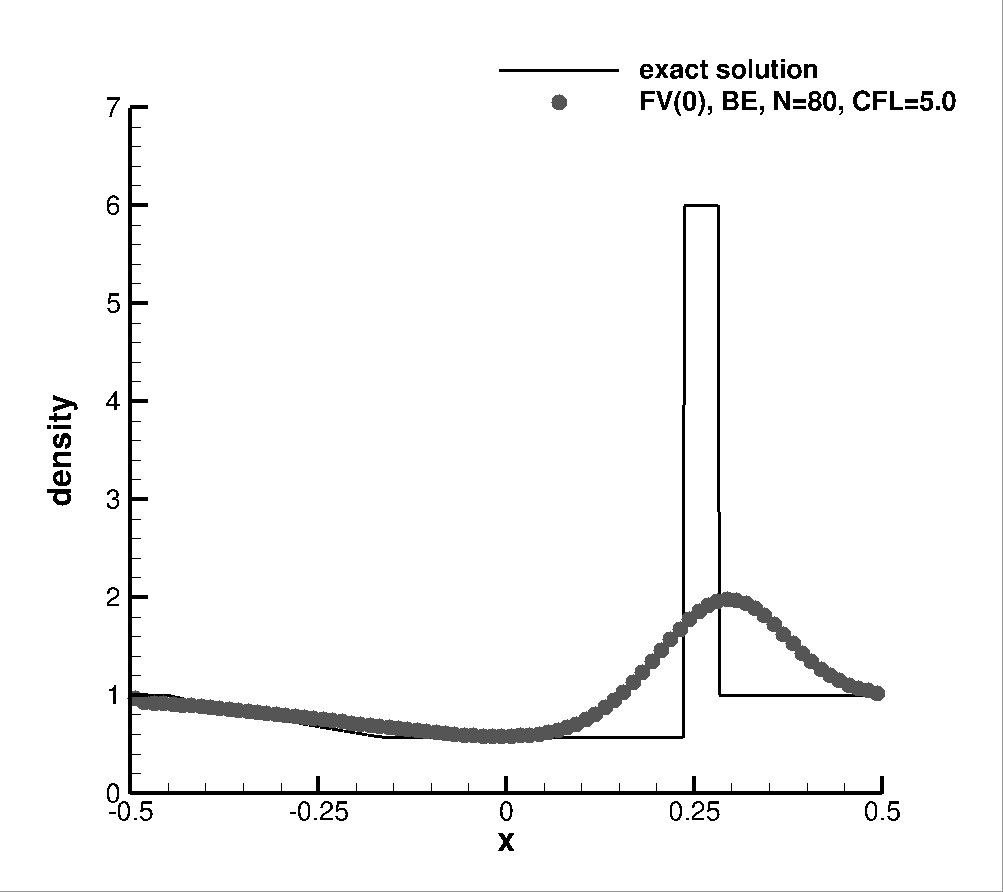}}
\subfloat{\includegraphics[width=4.5cm]{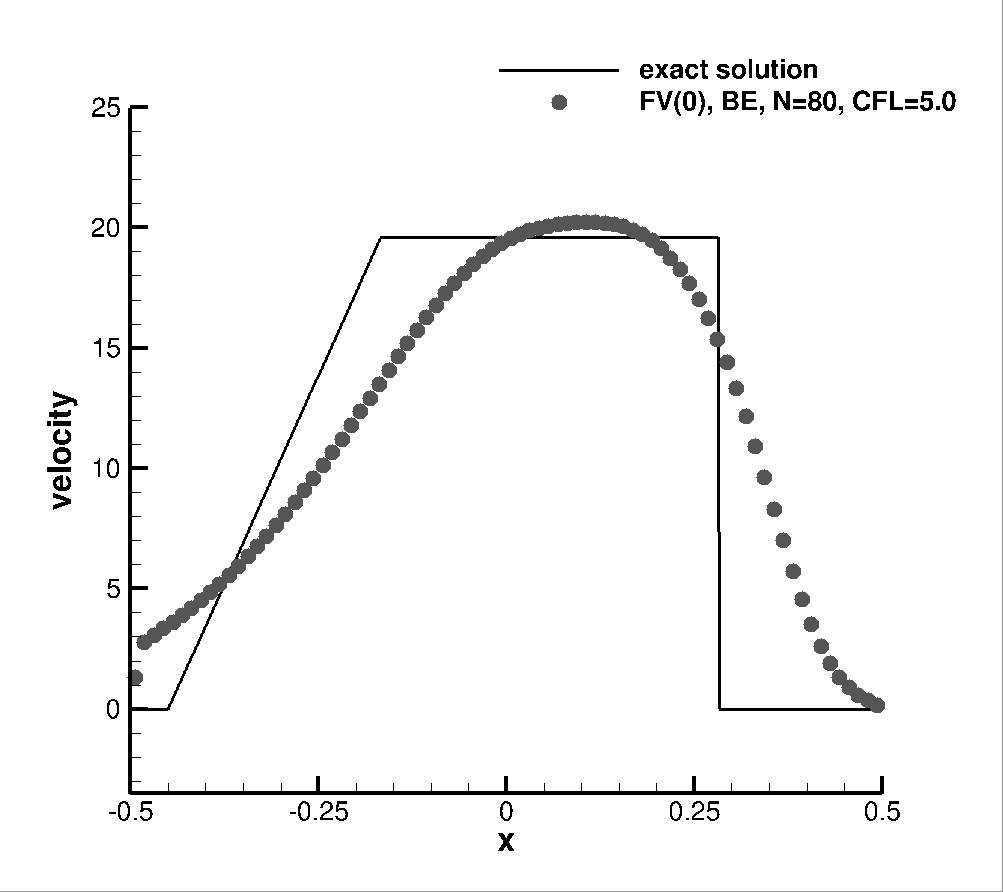}}
\subfloat{\includegraphics[width=4.5cm]{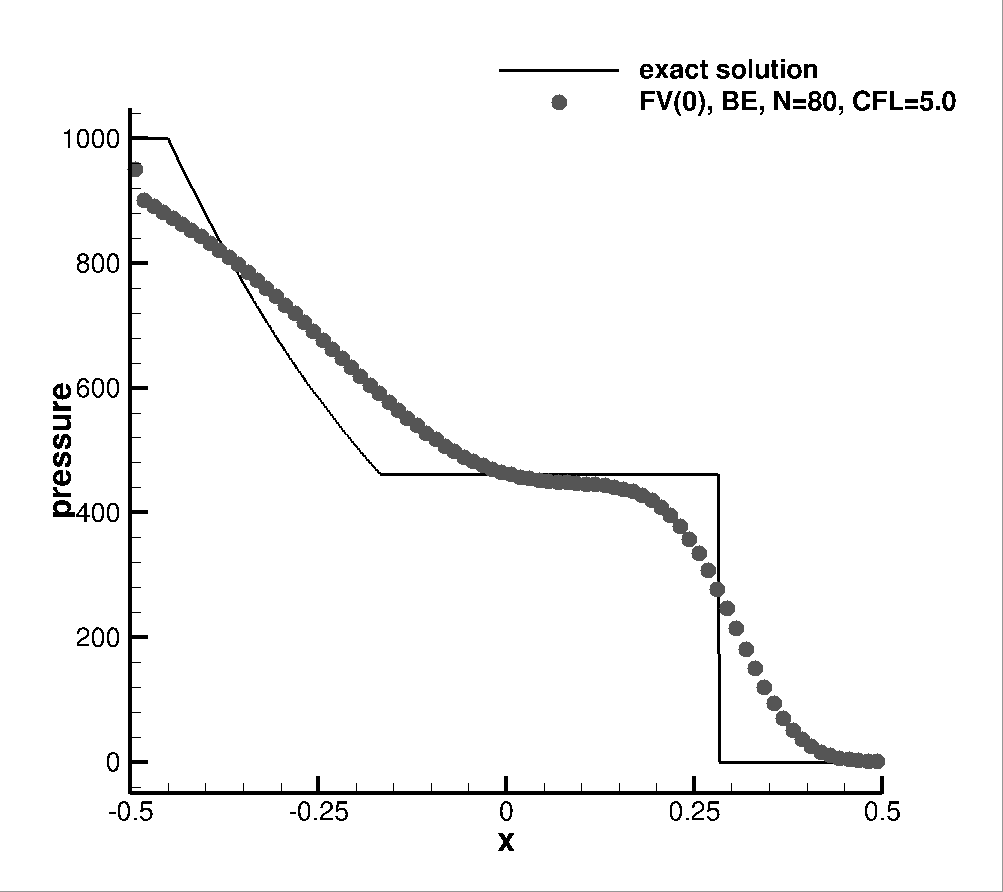}} \\
\subfloat{\begin{picture}(0,0) \put(-15,20){\rotatebox{90}{limited scheme}} \end{picture}}
\subfloat[density]{\includegraphics[width=4.5cm]{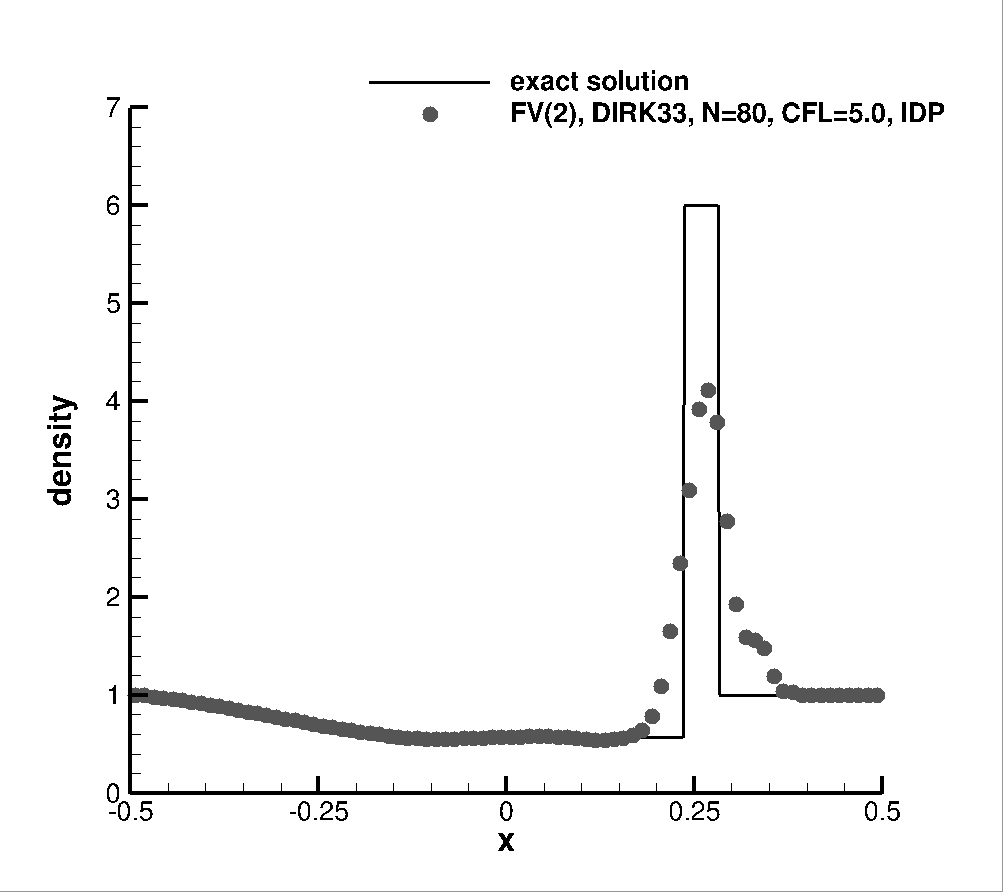}}
\subfloat[velocity]{\includegraphics[width=4.5cm]{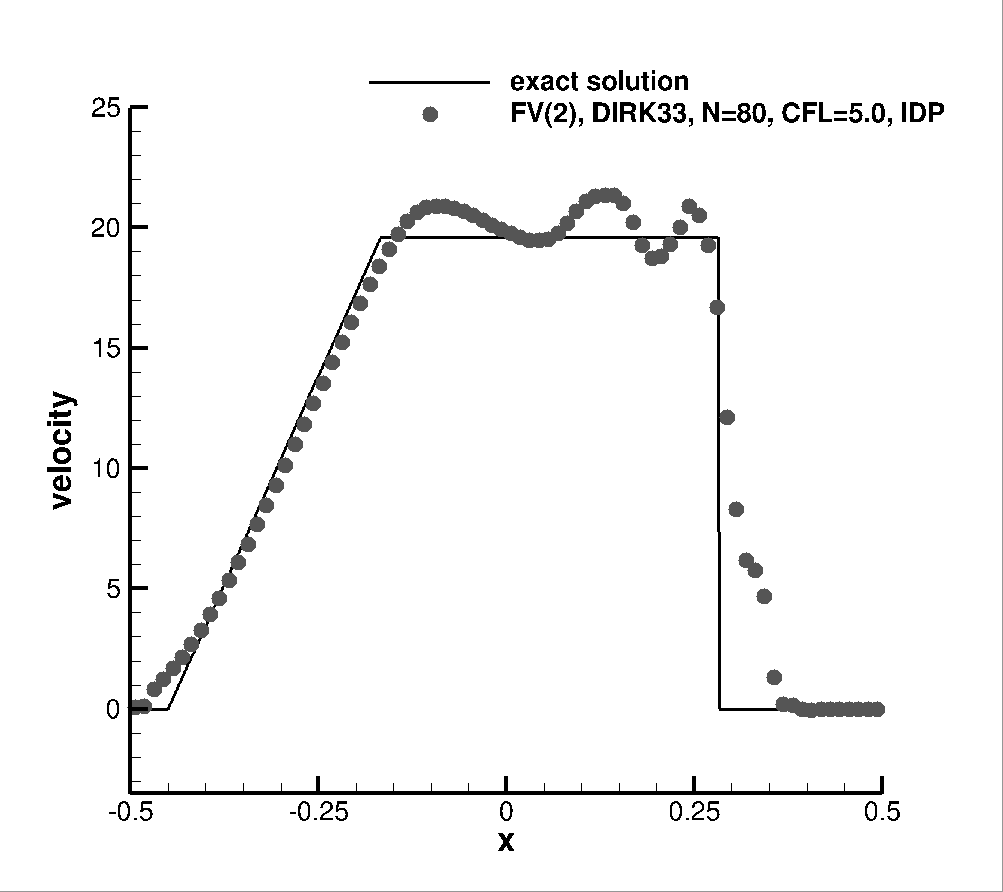}}
\subfloat[pressure]{\includegraphics[width=4.5cm]{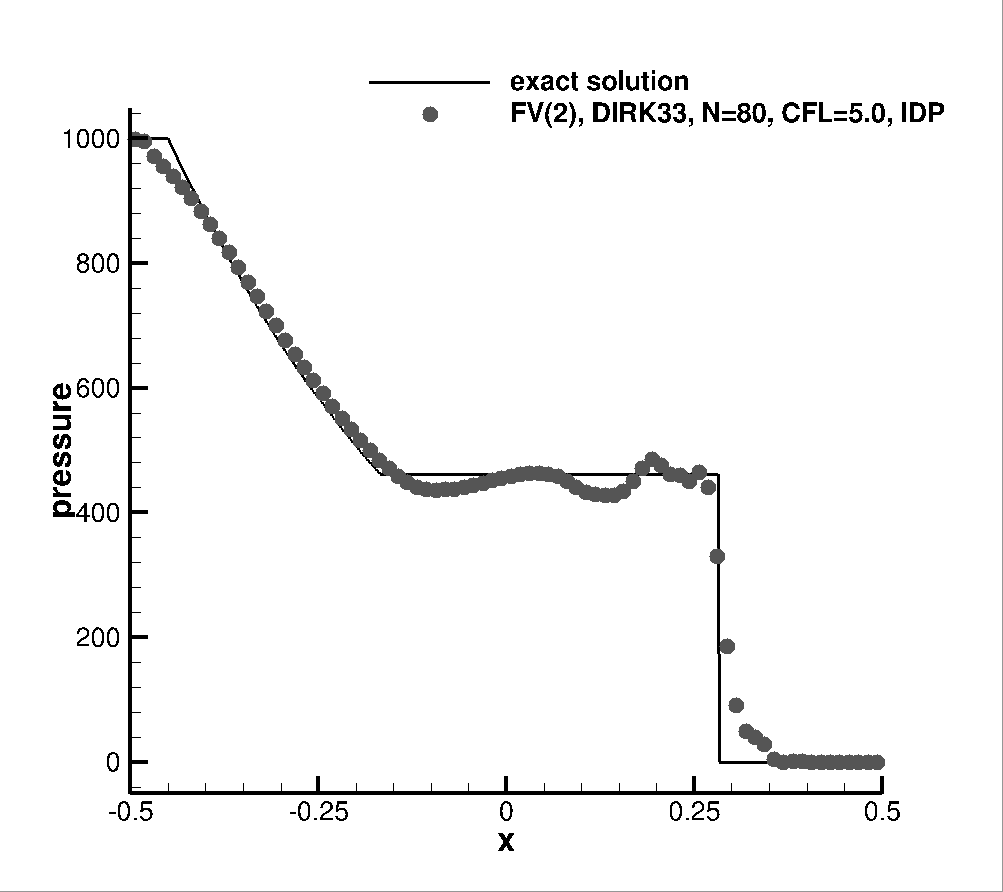}}
\caption{Riemann problem with initial data \cref{eq:RP_Toro_pb3}: implicit LO FV(0)-BE (top) and limited FV(2)-DIRK33 (bottom) solutions at $t=0.012$ on a uniform mesh with $N=80$ cells and $\CFL=5$. The DOFs (bullets) are compared to the exact solution (lines).}
\label{fig:toro_pb3_FV_DIRK_cfl5.0}
\end{figure} 

The effects of the iterative limiter described in \cref{algo:iterative_IDP_limiter} (see \cref{sec:acc_iterative_limiter}) and the acceleration factor $\beta$ (see \cref{sec:iterative_limiter}) are shown in \cref{fig:toro_pb3_FV_ERK_beta12}. With $\beta=1$, the limited solution converges slowly toward the HO solution. Increasing the maximum number of limiter iterations brings the limited solution closer to the unlimited HO solution, while preserving invariant domains. Setting $\beta=2$ strongly accelerates the convergence toward the HO solution; after the first limiter iteration, the limited solution is identical to the tenth iteration obtained with $\beta=1$.

\begin{figure}
\centering
\captionsetup[subfigure]{labelformat=empty}
\subfloat{\begin{picture}(0,0) \put(-15,45){\rotatebox{90}{$\beta=1$}} \end{picture}}
\subfloat{\includegraphics[width=4.5cm]{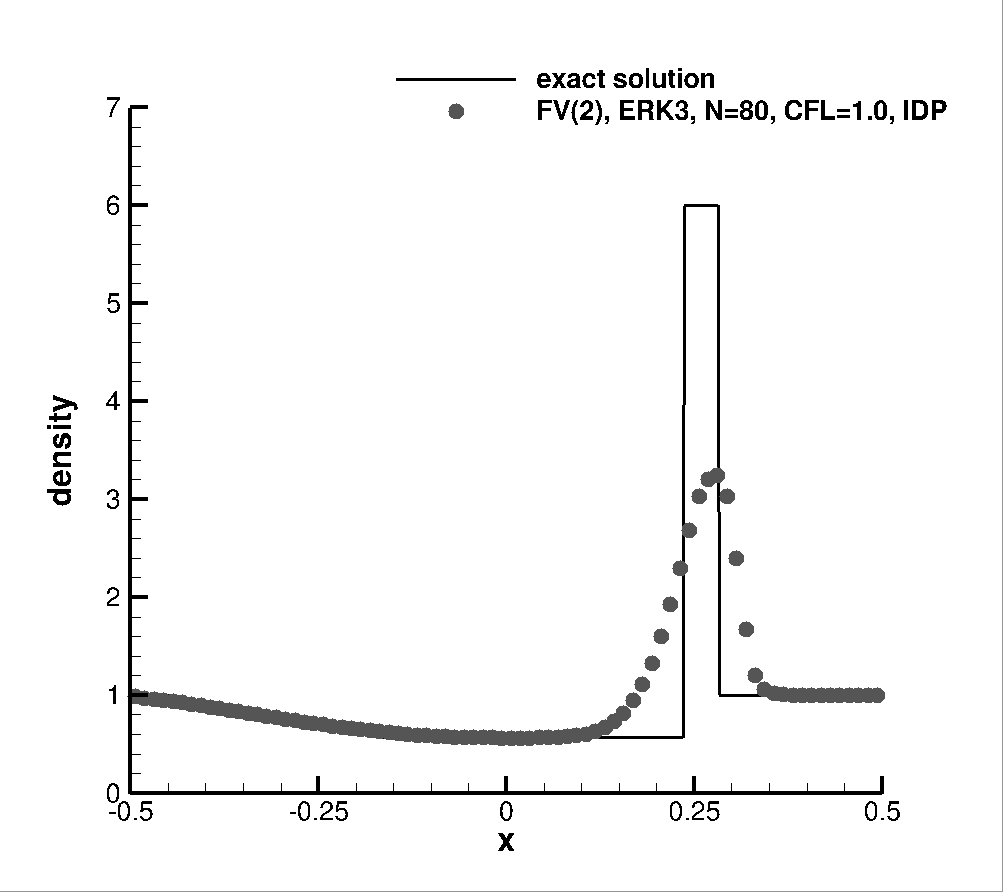}}
\subfloat{\includegraphics[width=4.5cm]{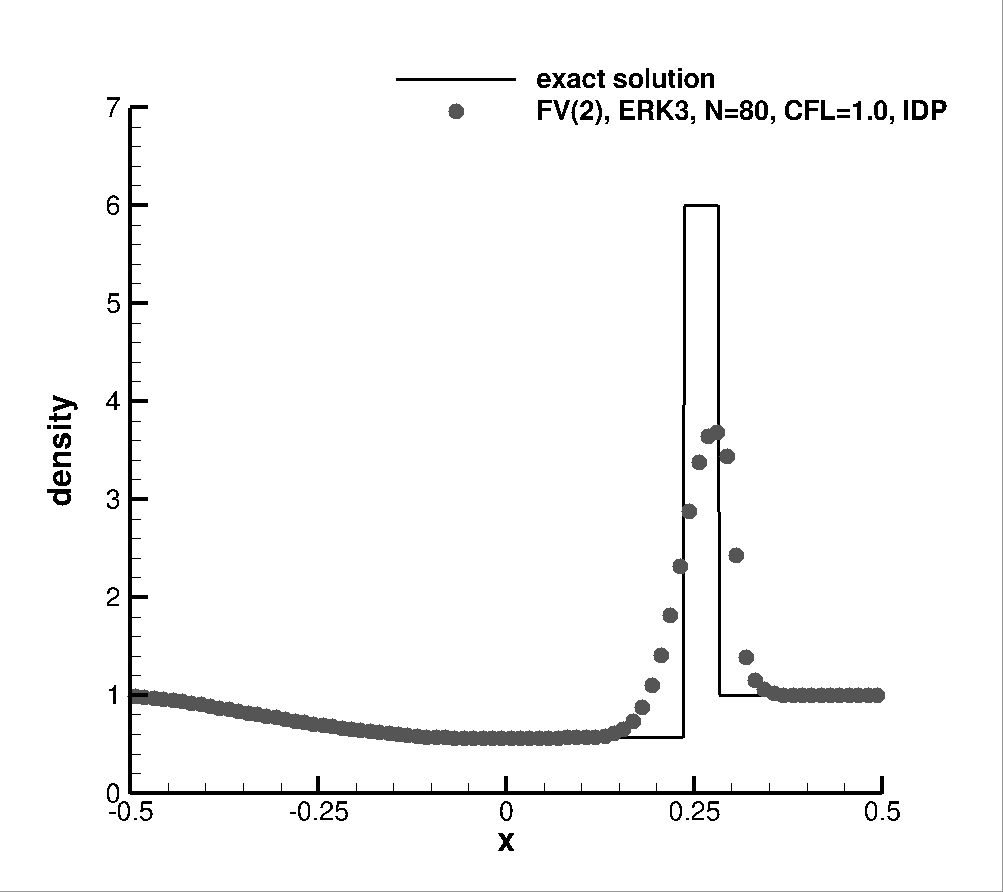}}
\subfloat{\includegraphics[width=4.5cm]{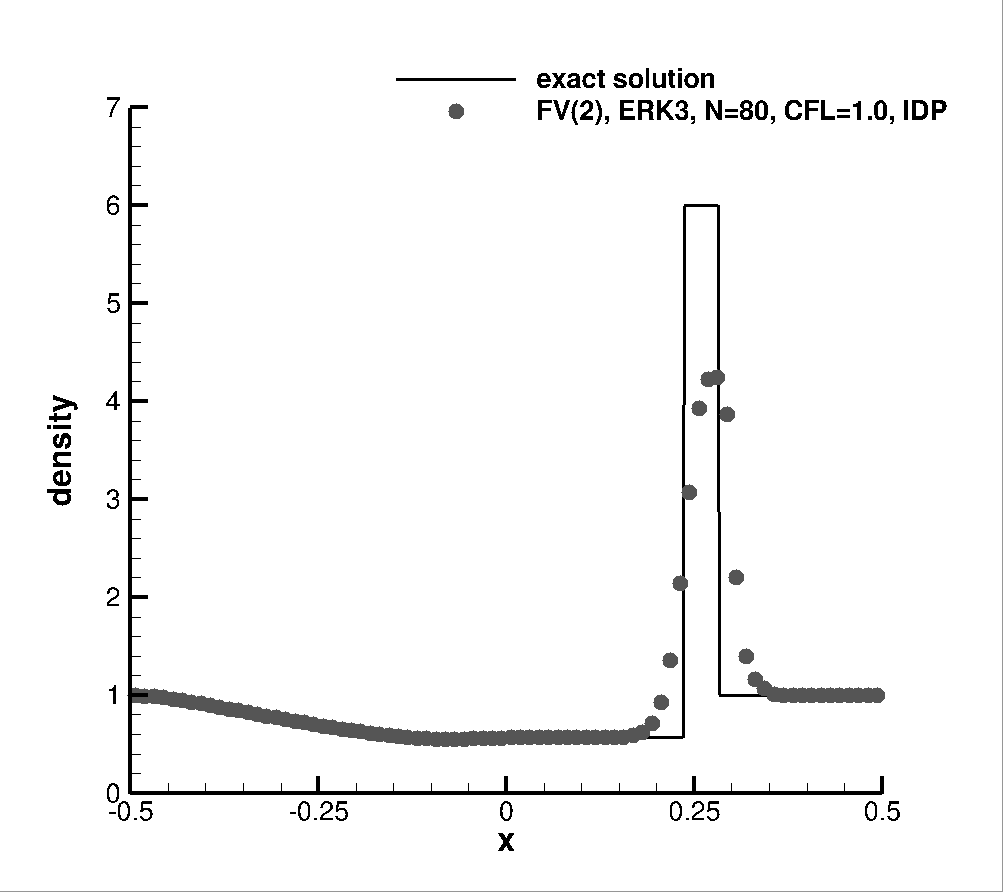}} \\
\subfloat{\begin{picture}(0,0) \put(-15,45){\rotatebox{90}{$\beta=2$}} \end{picture}}
\subfloat[$k_{max}=1$]{\includegraphics[width=4.5cm]{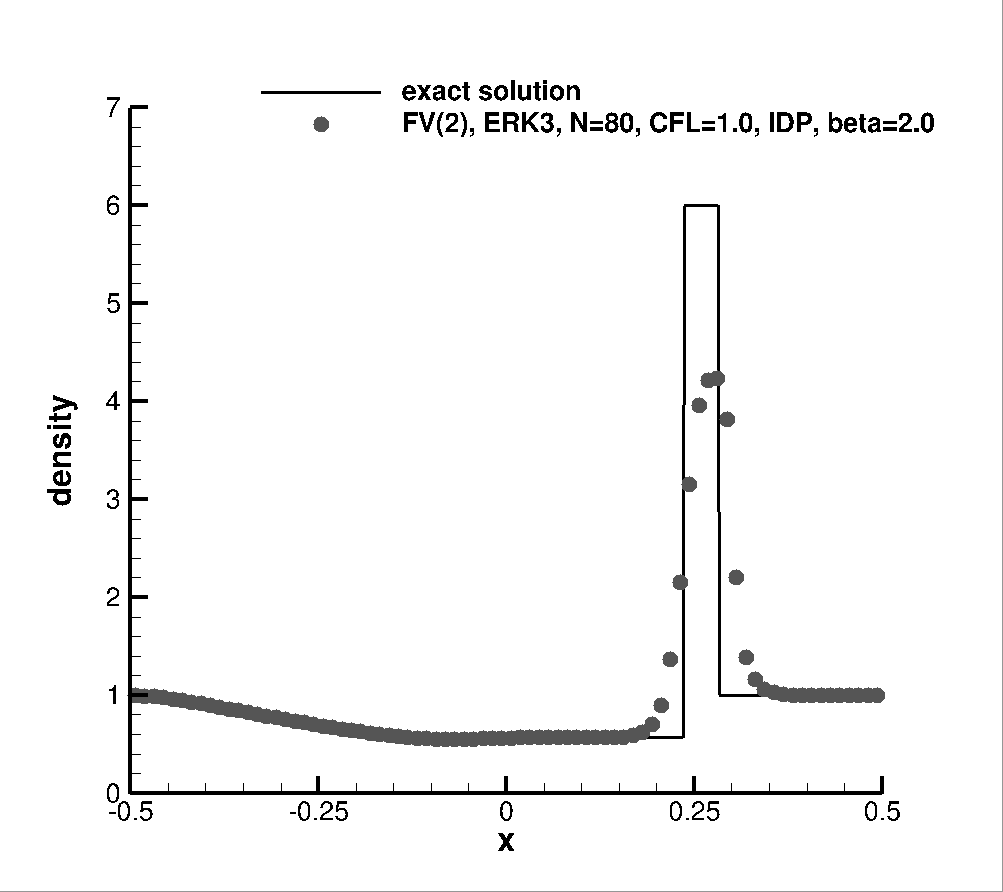}}
\subfloat[$k_{max}=2$]{\includegraphics[width=4.5cm]{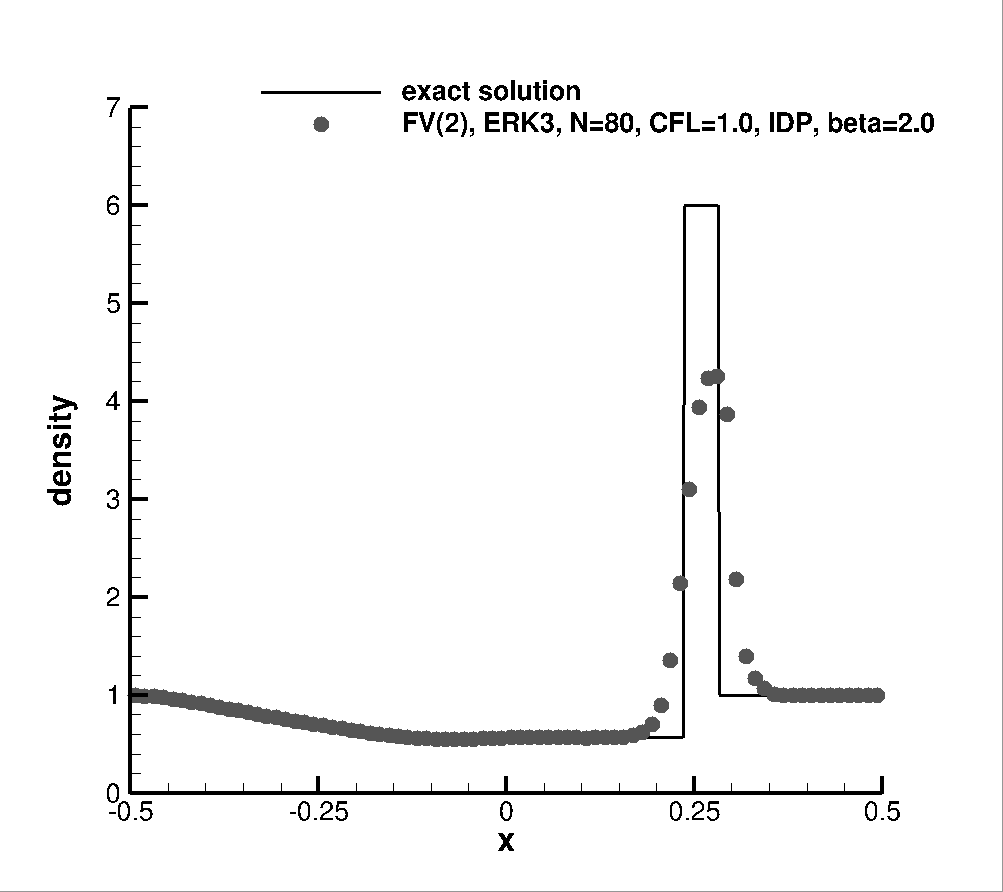}}
\subfloat[$k_{max}=10$]{\includegraphics[width=4.5cm]{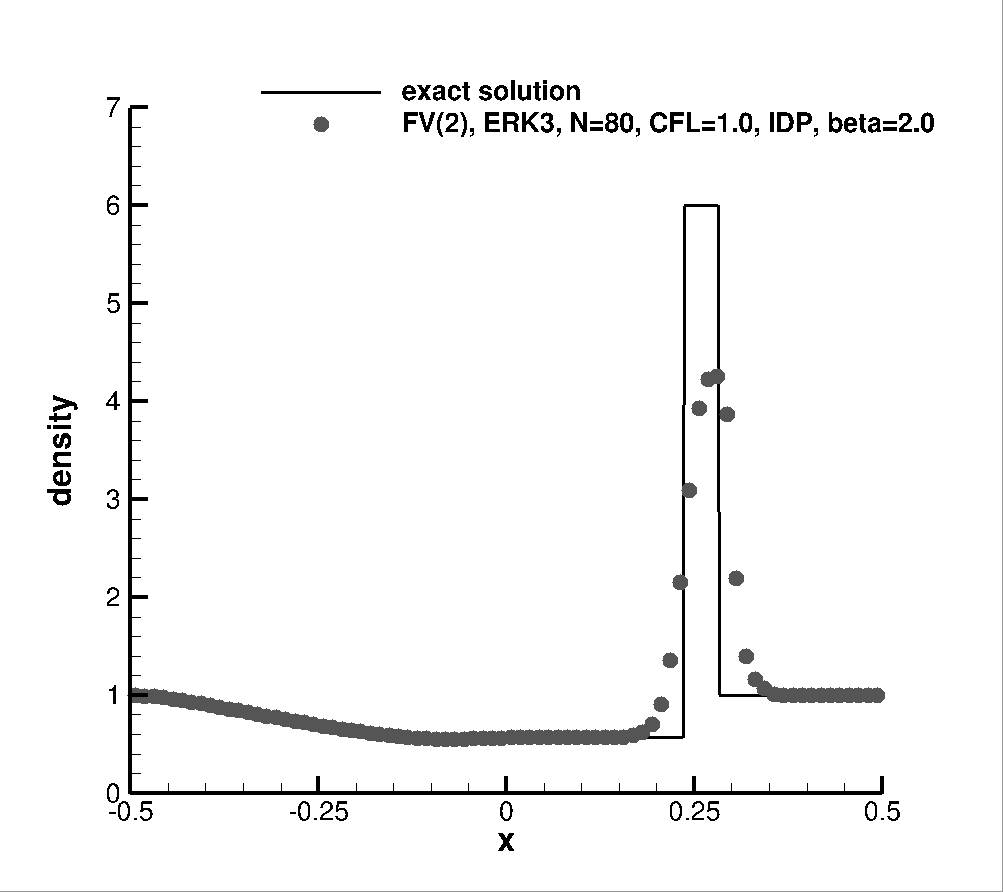}}
\caption{Riemann problem with initial data \cref{eq:RP_Toro_pb3}: The top and bottom rows correspond to the limited FV(2)-ERK3 scheme with acceleration factors $\beta=1$ and $\beta=2$, respectively (see \cref{sec:iterative_limiter}). Computations are performed at $t=0.012$ on a uniform mesh with $N=80$ cells, $\CFL=1.0$, and different iterations of the iterative limiter $k_{max}$ in \cref{algo:iterative_IDP_limiter}. The DOFs values (bullets) are compared to the exact solution (lines).}
\label{fig:toro_pb3_FV_ERK_beta12}
\end{figure} 


\color{black}

%
\subsection{Discontinuous Galerkin schemes with Runge-Kutta integration}\label{sec:num-xp_DG_RK}

Here we consider the fourth-order DGSEM \cref{eqn:fully-discr_DG,eq:semi-discr_DGSEM-res} with $p=3$ as HO scheme for the discretization of the 1D and 2D compressible Euler equations with a polytropic ideal gas law as in the previous section. We use the DGSEM scheme with graph viscosity \cref{eqn:fully-discr_LO_DG} as LO scheme, while the limited scheme is \cref{eq:IDP_limiter_DGSEM,eq:pos_limiter} and is applied to ensure positivity of density and internal energy. We use the Chandrashekar's EC fluxes \cite{chandrashekar13} within cells and a Rusanov flux with local upwinding at interfaces in \cref{eq:semi-discr_DGSEM-res}. The scheme is integrated in time by using the strongly $S$-stable three-stage DIRK33 method from Alexander \cite{Alexander_DIRK_77}. The graph viscosity coefficient in \cref{eqn:fully-discr_LO_DG} is defined from the analyses in \cite[Lemma~4.1]{MRR_BEDGSEM_23} and \cite[Th.~2.2]{renac_mpp_dgsem_nlsca_24} in the scalar case. We use  $d_\kappa=2\max_{k,l}\frac{D_{kl}}{\omega_p^l}\sup_{{\bf x}\in\Omega_h}\lambda({\bf u}_0({\bf x}))$ with $2\max_{k,l}\frac{D_{kl}}{\omega_p^l}\simeq9.7$ for $p=3$ and $\lambda({\bf u})=|{\bf v}|+\sqrt{\gamma\p/\rho}$, which was seen to successfully stabilize and keep robustness of the computations. The time step is computed so as to satisfy $\Delta t^{(n)}\tfrac{\lambda(\langle{\bf u}_{h}^n\rangle_\kappa)}{\text{diam}\,\kappa}\leq\CFL$ in all cells $\kappa\in\Omega_h$.

We first consider the problem of a 2D Kelvin-Helmholtz instability \cite{chan2022entropy} in a square domain $[-1,1]^2$ with periodic conditions and initial data as $\rho_0({\bf x})=\tfrac{1}{2}+\tfrac{3}{4}B({\bf x})$, $u_0({\bf x})=\tfrac{1}{2}(B({\bf x})-1)$, $v_0({\bf x})=\tfrac{1}{10}\sin(2\pi x)$ and $\p_0({\bf x})=1$ where $B({\bf x})=\tanh(15y+ 7.5)-\tanh(15y-7.5)$. Although there are strong variations in
the initial density and velocity fields that usually challenge the robustness and stability of the methods \cite{chan2022entropy,carlier_renac_IDP_22}, the present time implicit HO scheme already preserves positivity even at large time steps. We here rather evaluate the capability of the limiter to preserve the resolution of the HO scheme in presence of a wide range of scales in the solution. \Cref{fig:KHI} displays snapshots of density fields obtained on a Cartesian mesh. The large diffusion of the LO scheme damps all the small scales, but the limiter allows to recover the high resolution of the HO scheme, with the small differences attributable to the sensitivity of this test case to small perturbations.

\begin{figure}
\centering
\captionsetup[subfigure]{labelformat=empty}
\subfloat[HO scheme]{\includegraphics[width=4.5cm]{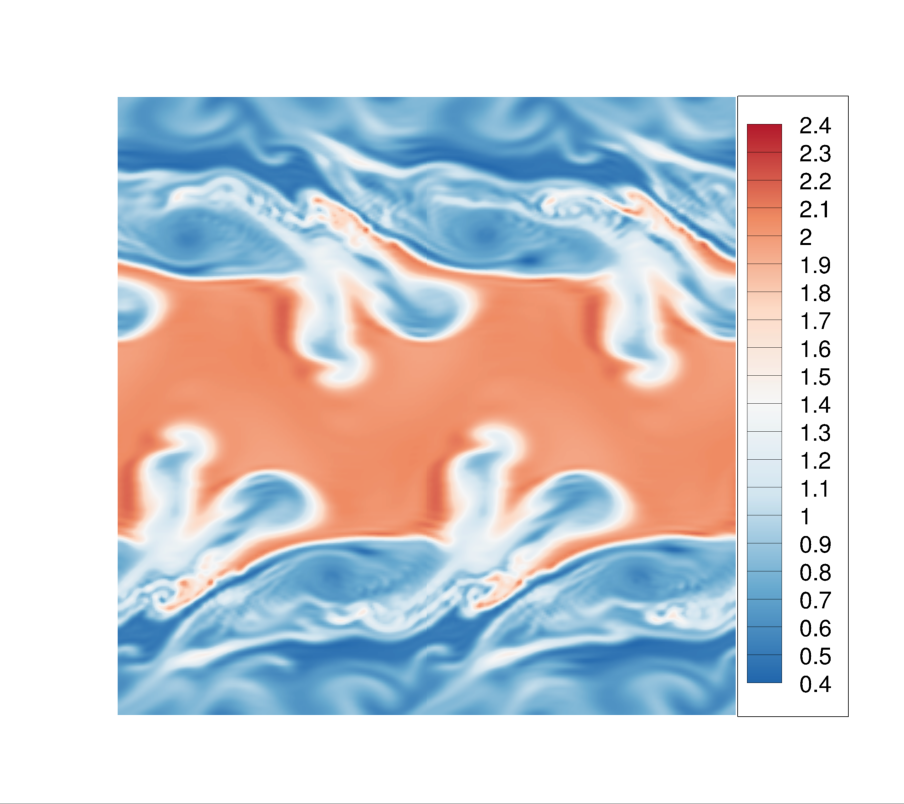}}
\subfloat[LO scheme]{\includegraphics[width=4.5cm]{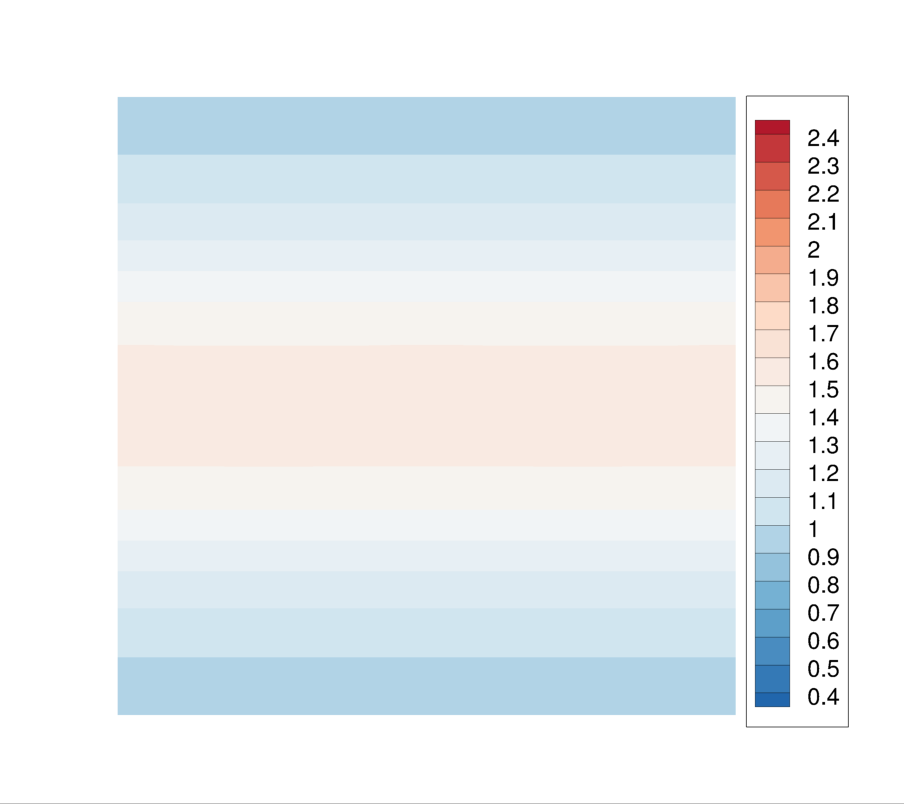}}
\subfloat[limited scheme]{\includegraphics[width=4.5cm]{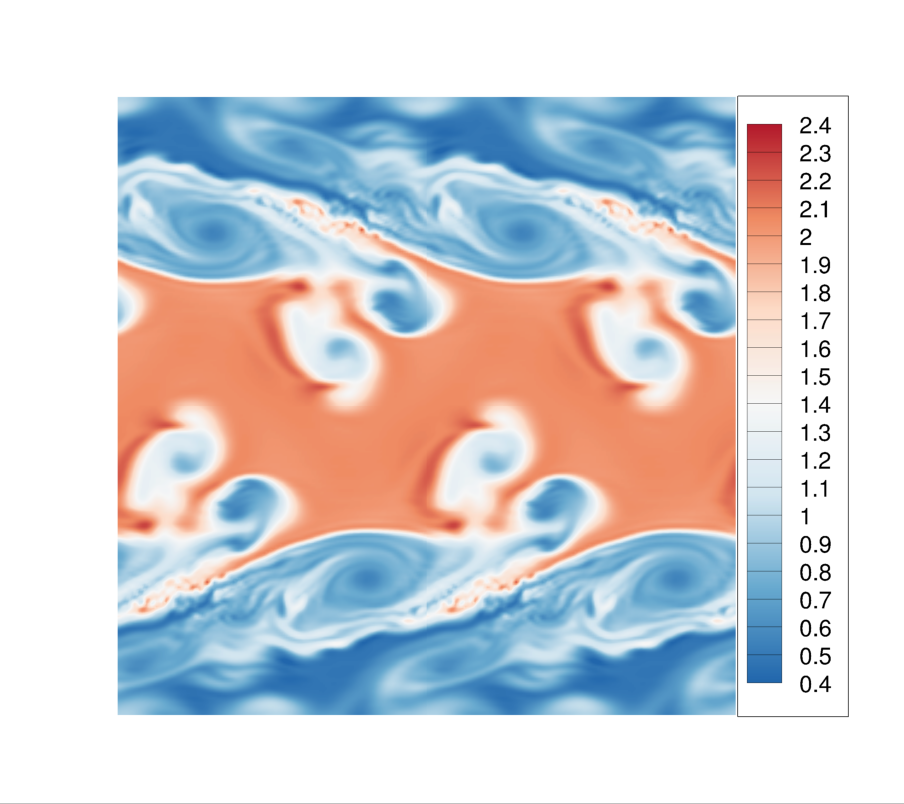}}
\caption{Two-dimensional Kelvin--Helmholtz instability: density field at $t = 10$ obtained with the DGSEM($3$) and DIRK33 schemes on a mesh with $N=64\times64$ elements and a large time step with $\CFL=10$.}
\label{fig:KHI}
\end{figure}

We now consider once again the Riemann problem with strong waves \cref{eq:RP_Toro_pb3} from \cite{toro_book}, which we solve with the DGSEM(3) scheme in space on a uniform mesh with $N=80$ cells and the DIRK33 method in time and two different time steps corresponding to CFL values $\CFL=1$ and $\CFL=5$. The HO scheme without limiter is unable to run the computation up to the final time due to the presence of the strong shock. As observed in \cref{fig:toro_pb3_DGSEM_DIRK}, the limited scheme resolves sharply all the waves when the time step is of the order of the wave speeds ($\CFL=1$), while it keeps robustness of the computation at larger time step ($\CFL=5$) but results in lower resolution and larger spurious oscillations.

\begin{figure}
\centering
\captionsetup[subfigure]{labelformat=empty}
\subfloat{\begin{picture}(0,0) \put(-15,45){\rotatebox{90}{$\CFL=1$}} \end{picture}}
\subfloat{\includegraphics[width=4.5cm]{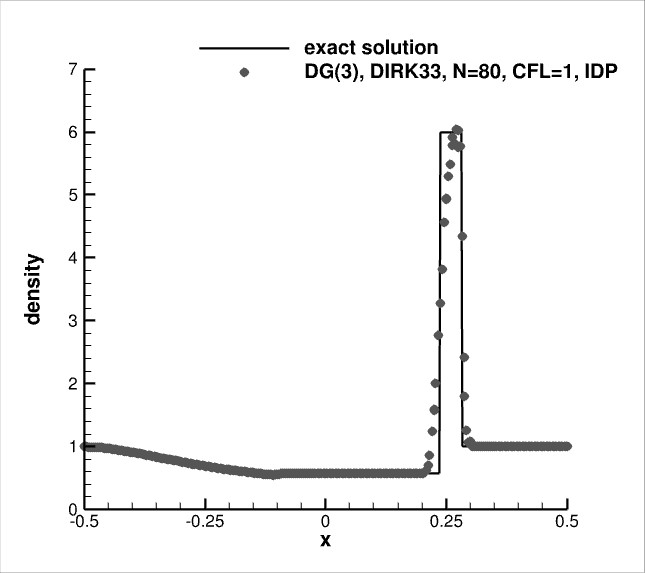}}
\subfloat{\includegraphics[width=4.5cm]{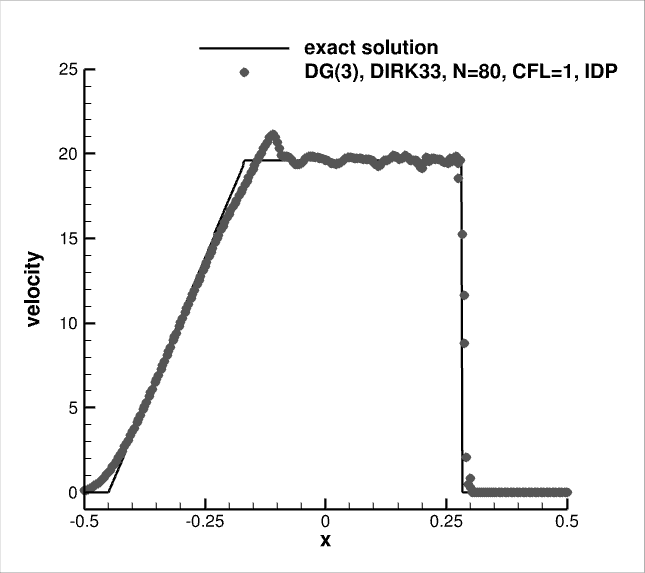}}
\subfloat{\includegraphics[width=4.5cm]{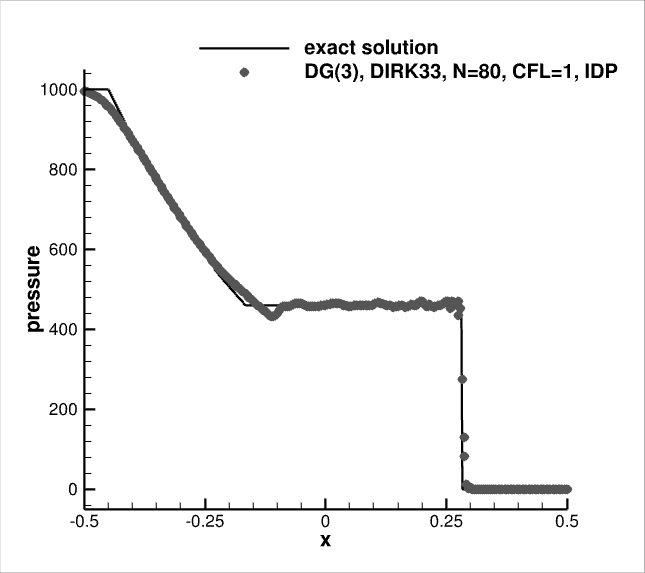}} \\
\subfloat{\begin{picture}(0,0) \put(-15,45){\rotatebox{90}{$\CFL=5$}} \end{picture}}
\subfloat[density]{\includegraphics[width=4.5cm]{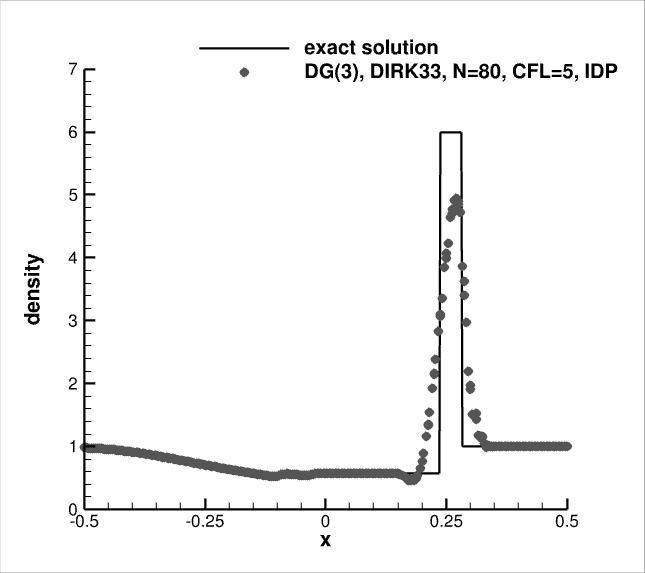}}
\subfloat[velocity]{\includegraphics[width=4.5cm]{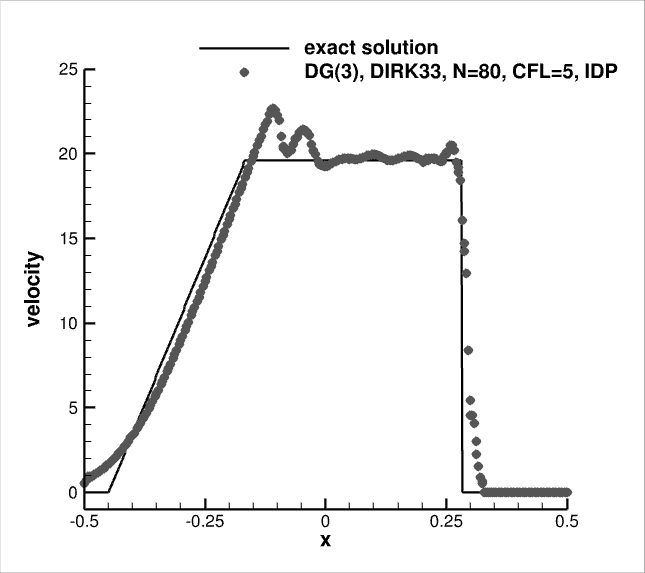}}
\subfloat[pressure]{\includegraphics[width=4.5cm]{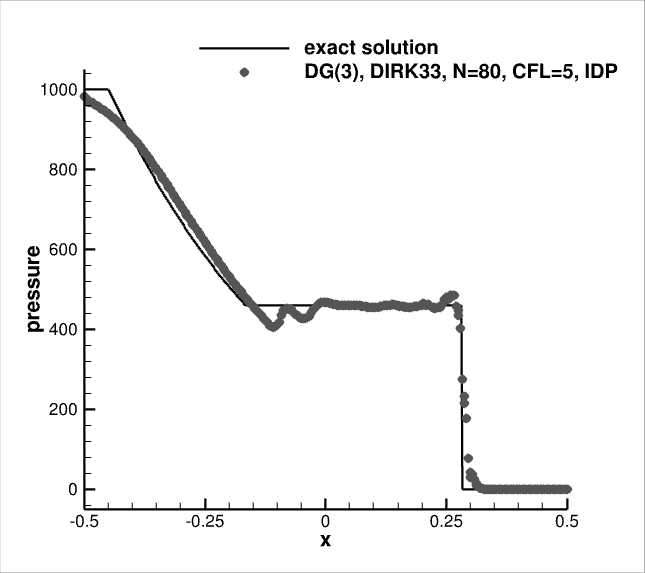}}
\caption{Riemann problem with initial data \cref{eq:RP_Toro_pb3}: solutions obtained with the limited DGSEM(3) and DIRK33 scheme at time $t=0.012$ with $\CFL=1$ (top row) and $\CFL=5$ (bottom row) on a uniform mesh with $N=80$ cells. The $p+1=4$ DOFs per mesh element are displayed (bullets) and compared to the exact solution (lines).}
\label{fig:toro_pb3_DGSEM_DIRK}
\end{figure} 

We also consider the 2D problem defined by the diffraction of a shock wave at a corner. The computational domain is the union of $[0,1] \times [6,11]$ and $[1,13] \times [0,11]$. The initial condition is a pure right-moving shock of Mach $5.09$, initially located at $x = 0.5$ and $6 \leq y \leq 11$, moving into undisturbed air ahead of the shock with a density of $1.4$ and pressure of $1$. The boundary conditions are inflow at $x = 0$, $6 \leq y \leq 11$, outflow at $x = 13$, $0 \leq y \leq 11$, and $1 \leq x \leq 13$, $y = 0$, and reflective at the walls $0 \leq x \leq 1$, $y = 6$ and $x = 1$, $0 \leq y \leq 6$, and top boundary $0 \leq x \leq 13$, $y = 11$. We use a coarse uniform Cartesian mesh with $N=8768$ cells corresponding to a cell size $\text{diam}\,\kappa=\frac{1}{16}$ and low and large time steps corresponding to $\CFL=1$ and $\CFL=10$, respectively. In \cref{fig:shock_diffraction_pb} we plot the density contours at final time, where we compare the limited scheme obtained for both CFL values with the LO scheme at low CFL. Once again, we cannot run the simulation with the HO scheme due to nonphysical solutions, so we also provide results obtained with the HO scheme in space and an explicit time stepping with the three-stage third-order SSP Runge-Kutta method from \cite{shu-osher88}. These results are stabilized with a low enough time step so as to allow to keep positivity of the cell-averaged solution and the linear scaling limiter to obtain a positive solution \cite{zhang2010positivity}. Such solution may be considered as a reference in space and time resolution levels and we observe that the LO scheme indeed presents large diffusion, but the limited scheme at $\CFL=1$ offers similar resolution as the explicit integration, thus indicating that the limiter effectively allows to recover most of the resolution capabilities of the HO scheme, while maintaining positivity. The limited scheme at $\CFL=10$ also offers better resolution than the LO scheme at $\CFL=1$, but also exhibits some spurious oscillations.

%
\begin{figure}
\centering
\captionsetup[subfigure]{labelformat=empty}
\subfloat[ERK3 (ZS limiter \cite{zhang2010positivity}, $\CFL=\frac{1}{4}$)]{\includegraphics[width=6cm]{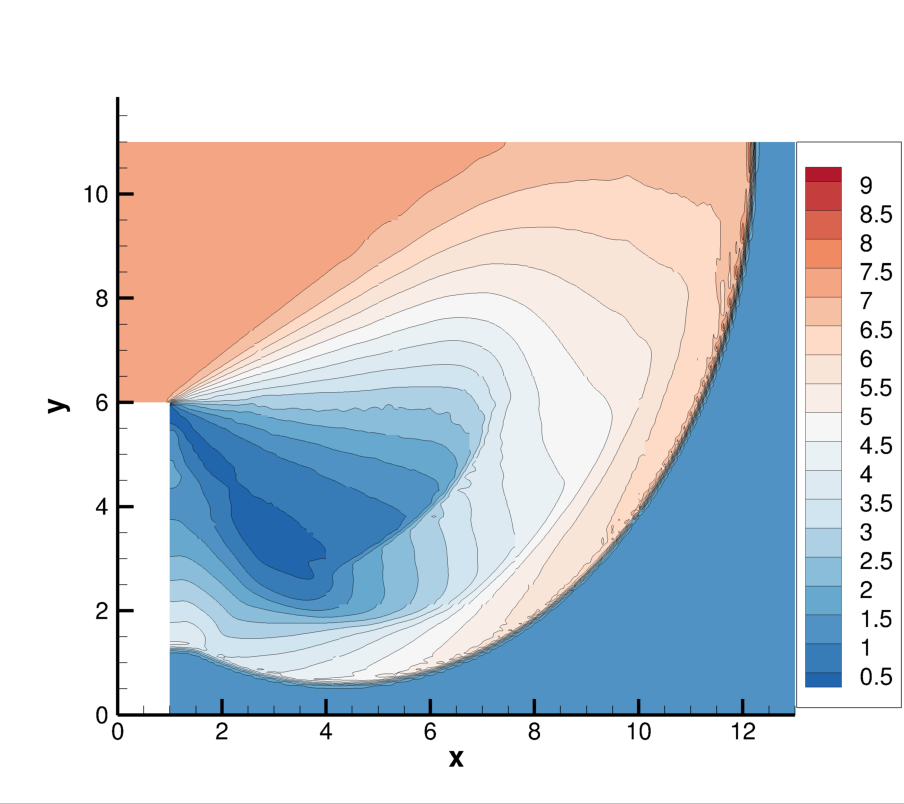}}
\subfloat[LO scheme ($\CFL=1$)]{\includegraphics[width=6cm]{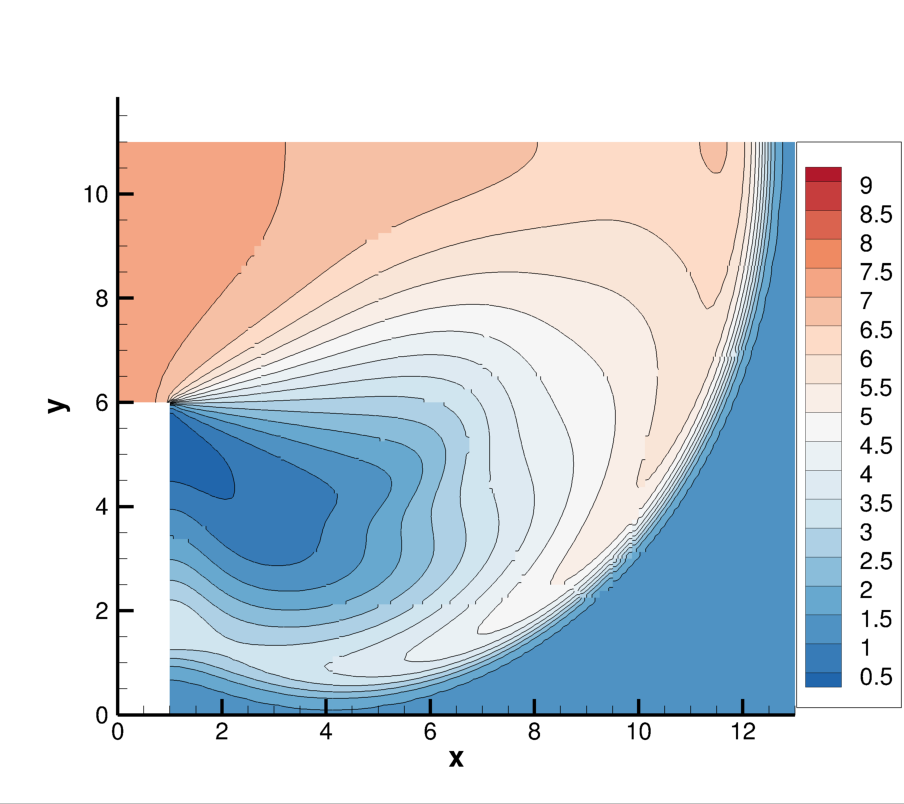}} \\
\subfloat[limited scheme ($\CFL=1$)]{\includegraphics[width=6cm]{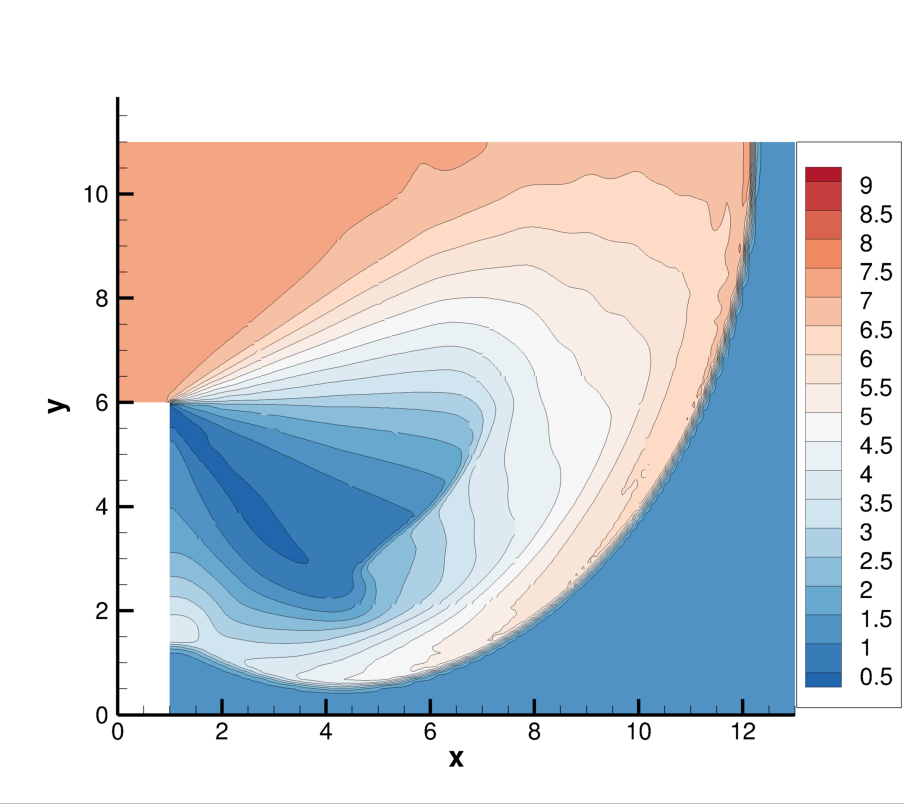}}
\subfloat[limited scheme ($\CFL=10$)]{\includegraphics[width=6cm]{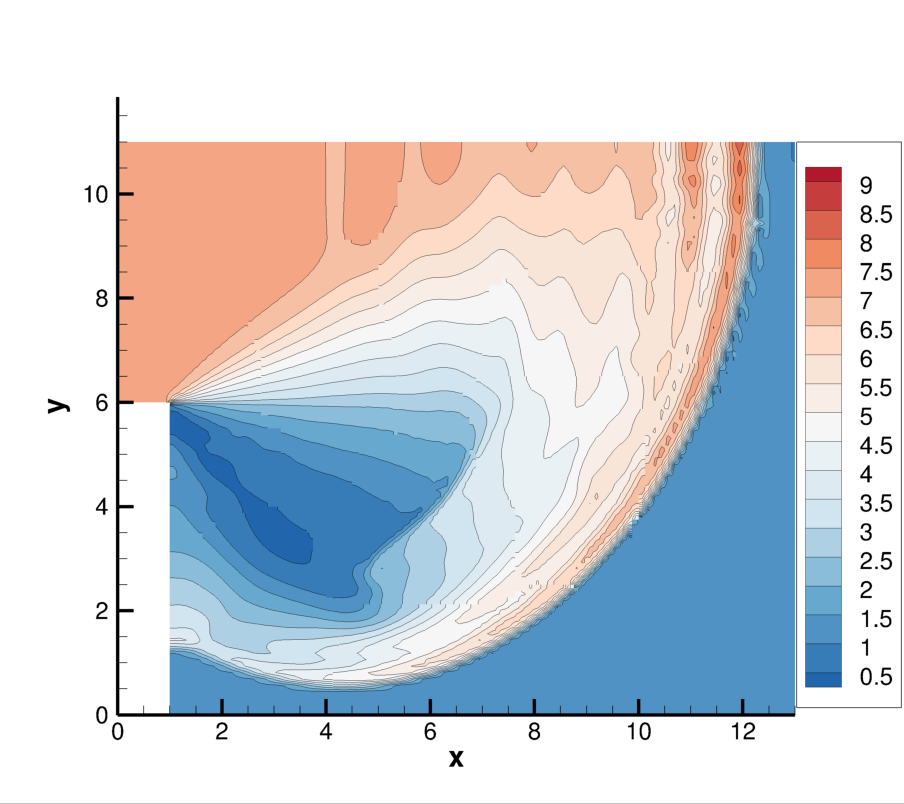}}
\caption{Shock diffraction problem: 18 evenly spaced density contours between 0.5 and 9 at time $t=2.3$. The DGSEM($3$) solution is post-processed by splitting each element into four subelements to evaluate the resolution within elements. The first computation uses an explicit time stepping (ERK3), while the other use the DIRK33 time stepping.}
\label{fig:shock_diffraction_pb}
\end{figure}

We finally consider the supersonic flow in a tunnel with a forward facing step \cite{woodward_collela_84}. The initial flow is uniform with density $\rho_0\equiv1.4$, ${\bf v}_0\equiv(3,0)^\top$ and pressure $\mathrm{p}_0\equiv1$ corresponding to a supersonic Mach $3$ flow. Inflow and outflow conditions are applied at the left and right boundaries, while we impose reflective walls at the top and bottom boundaries. The wind tunnel has a length of $3$, unit height and the step of height $0.2$ starts at $x = 0.6$. We use a coarse uniform Cartesian mesh with $4863$ cells corresponding to a cell size $\text{diam}\,\kappa=\tfrac{1}{40}$. \Cref{fig:FFS_pb} shows the density contours at final time, where we again compare the solutions of the limited scheme with two different time steps to the LO and explicit solutions. The conclusions are the same as those from the shock diffraction test case in \cref{fig:shock_diffraction_pb} and highlight robustness and good resolution capabilities of the limited scheme.

\begin{figure}
\centering
\captionsetup[subfigure]{labelformat=empty}
\subfloat[ERK3 (ZS limiter \cite{zhang2010positivity}, $\CFL=\frac{1}{4}$)]{\includegraphics[width=8.5cm]{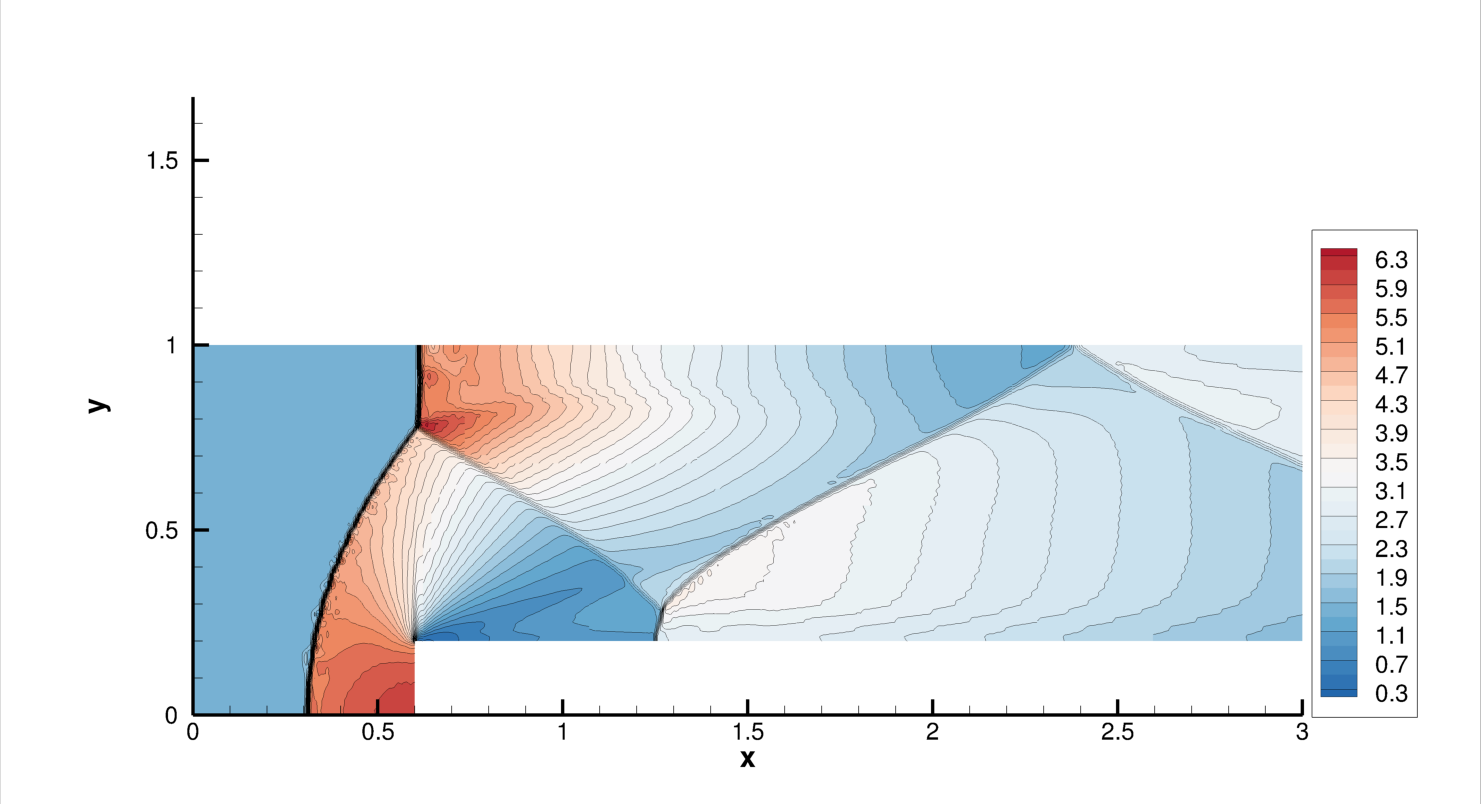}}
\subfloat[LO scheme ($\CFL=1$)]{\includegraphics[width=8.5cm]{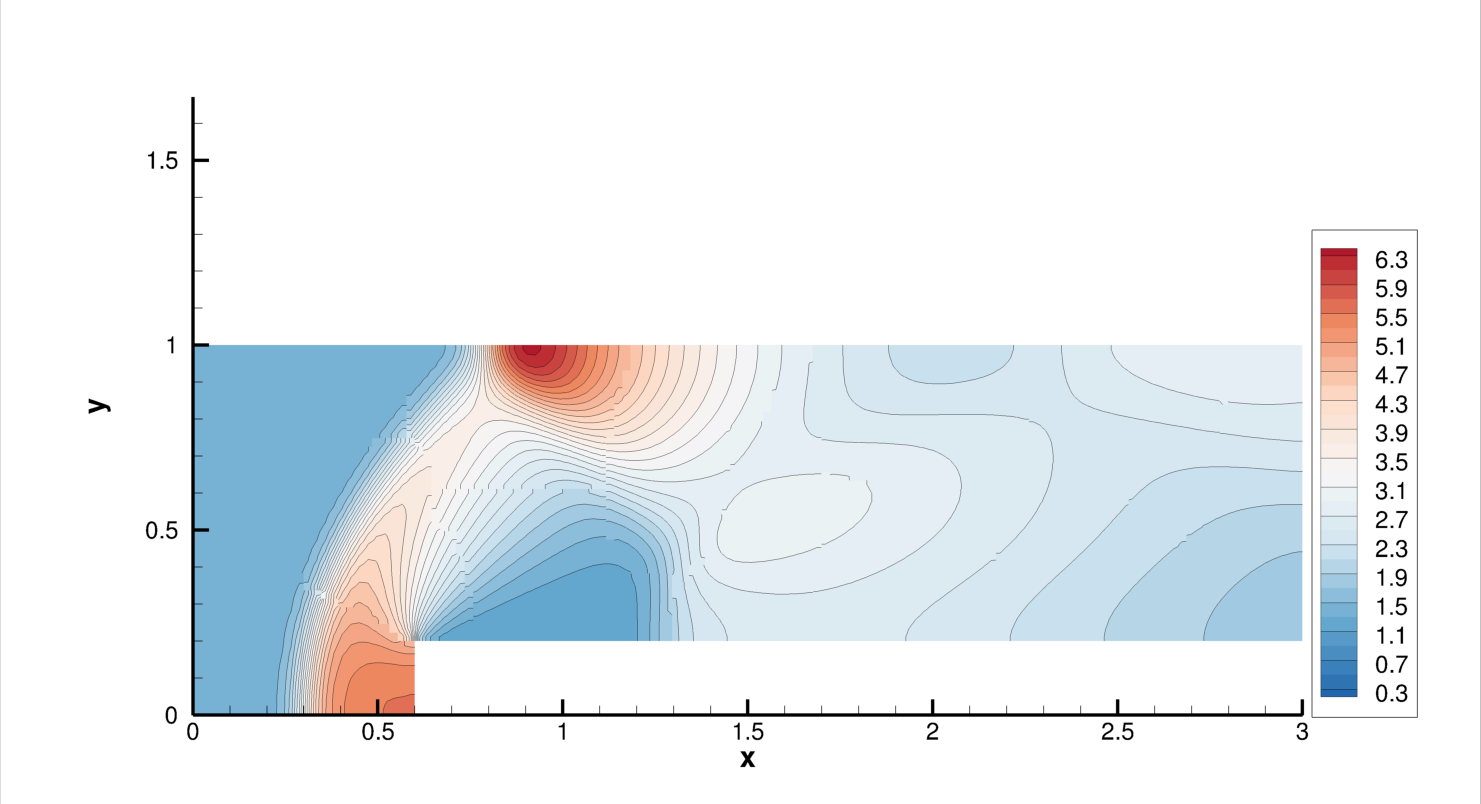}} \\
\subfloat[limited scheme ($\CFL=1$)]{\includegraphics[width=8.5cm]{FFS_5k_p3_dirk33_CFL1_IDP50_fromHO_lim2_noS_GV5-0.01_tol02.png}}
\subfloat[limited scheme ($\CFL=10$)]{\includegraphics[width=8.5cm]{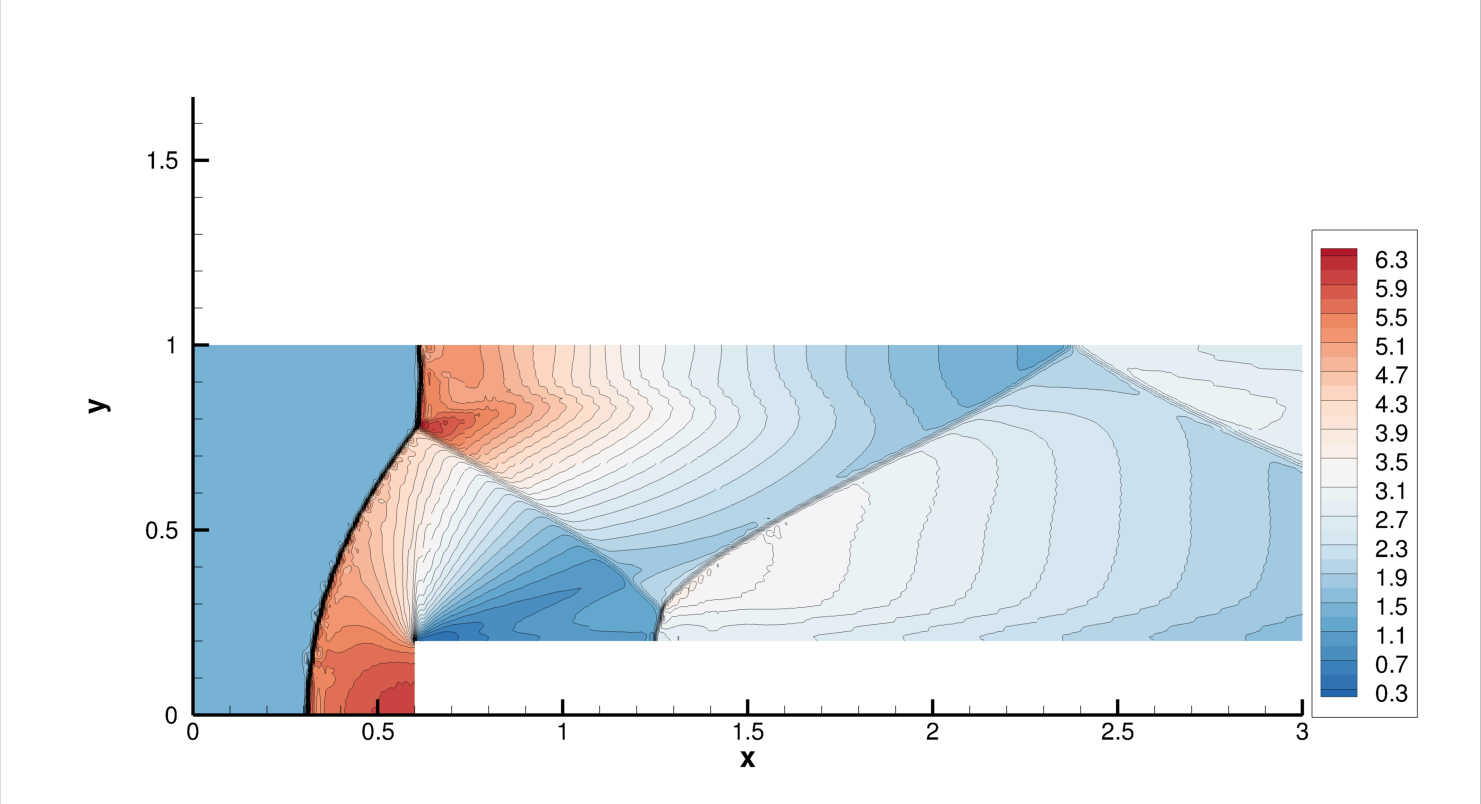}}
\caption{Forward facing step problem: 31 evenly spaced density contours between 0.3 and 6.3 at time $t=4$. The fourth-order accurate in space DGSEM($3$) solution is post-processed by splitting each element into four subelements to evaluate the resolution within elements. The first computation uses an explicit time stepping (ERK3), while the other use the DIRK33 time stepping.}
\label{fig:FFS_pb}
\end{figure}

\subsection{Space-time discontinuous Galerkin schemes}\label{sec:num-xp_stDG}

We here consider again the DGSEM in space as in \cref{sec:num-xp_DG_RK}, but coupled with a DGSEM time discretization as described in \cref{sec:time_DG_discretization} for the approximation of scalar hyperbolic problems. The LO scheme is the one proposed in \cite[Sec.~4]{renac_mpp_dgsem_nlsca_24} which consists in adding a space-time graph viscosity to the HO scheme. The only difference is the present limiter in place of the FCT limiter used in \cite{renac_mpp_dgsem_nlsca_24}. Once again, all results are obtained with the fourth-order accurate DGSEM($3,3$) in space and time as HO scheme. We use the exact Riemann solution to evaluate the numerical flux at interfaces, while we use symmetric two-point fluxes that are entropy conservative for the square entropy $\eta(u)=\tfrac{1}{2}u ^2$ within cells. The graph viscosity coefficient in \cref{eqn:fully-discr_LO_DG} is set from \cite[Th.~3.2]{renac_mpp_dgsem_nlsca_24} as  $d_\kappa=38.8L_f$ for $p=q=3$ with $L_f$ the Lipschitz constant of the flux (see \cref{tab:scalar_pbs}). The time step is computed from $\Delta t^{(n)} = \tfrac{\CFL}{L_f}\min_{\kappa\in\Omega_h}\text{diam}\,\kappa$, where the $\CFL$ values are given in \cref{tab:scalar_pbs}. The limiter is applied to impose the maximum principle at all DOFs. For nonlinear equations, we further impose a discrete entropy inequality for the Kru\v{z}kov's entropy $\eta(u)=|u|$ (see \cref{ex:scalar_eq}) and we refer to \cite[Sec.~5]{renac_mpp_dgsem_nlsca_24} for details. 

\begin{table}
 \begin{bigcenter}
     \caption{Definitions of the scalar problems and numerical parameters.}
     \begin{tabular}{llllcccccc}
        \noalign{\smallskip}\hline\noalign{\smallskip}
        problem & ${\bf f}(u)$ & $\Omega$ & $u_0(x)$ & $t$ & $p$ & $q$ & $\text{diam}\,\kappa$ & $\CFL$ & $L_f$ \\
        \noalign{\smallskip}\hline\noalign{\smallskip}
        linear & $u$ & $[0,1]$ & \cite[Ex.~1]{jiang_shu_WENO_96} & 1 & 3 & 3 & $\tfrac{1}{40}$ & $1$ & $1$ \\
        Burgers & $\tfrac{1}{2}\begin{pmatrix}u^2\\ u^2\end{pmatrix}$ & $[0,1]^2$ & $\tfrac{7}{4}1_{\|{\bf x}-(\tfrac{5}{8},\tfrac{5}{8})\|_\infty \leq \tfrac{1}{4}}-\tfrac{3}{4}$ & $\tfrac{3}{8}$ & $3$ & $3$ & $\tfrac{1}{128}$ & $10$ & $\sqrt{2}$ \\
        %
        %
        KPP & $\begin{pmatrix}\sin u\\\cos u\end{pmatrix}$ & $[-2,2]\!\times\![-\tfrac{5}{2},\tfrac{3}{2}]$ & $\left\{ \begin{array}{rl}  \tfrac{7\pi}{2} & \text{if } |{\bf x}| \leq 1, \\  \tfrac{\pi}{4} & \text{else.} \end{array}\right.$ & $1$ & $3$ & $3$ & $\tfrac{1}{25}$ & $10$ & $1$ \\ 
        \noalign{\smallskip}\hline\noalign{\smallskip} 
    \end{tabular}
    \label{tab:scalar_pbs}
 \end{bigcenter}
\end{table}

We first consider the 1D linear transport equation with nonsmooth initial data from \cite[Ex.~1]{jiang_shu_WENO_96} and periodic boundary conditions in \cref{fig:lin1D}. The LO scheme results in a very diffusive solution, while the HO scheme produces spurious oscillations that violate the maximum principle. The limiter successfully removes the oscillations and satisfies the maximum principle, while resolving the features of the solution sharply.

\begin{figure}
\centering
\captionsetup[subfigure]{labelformat=empty}
\subfloat[HO scheme]{\includegraphics[width=4.5cm]{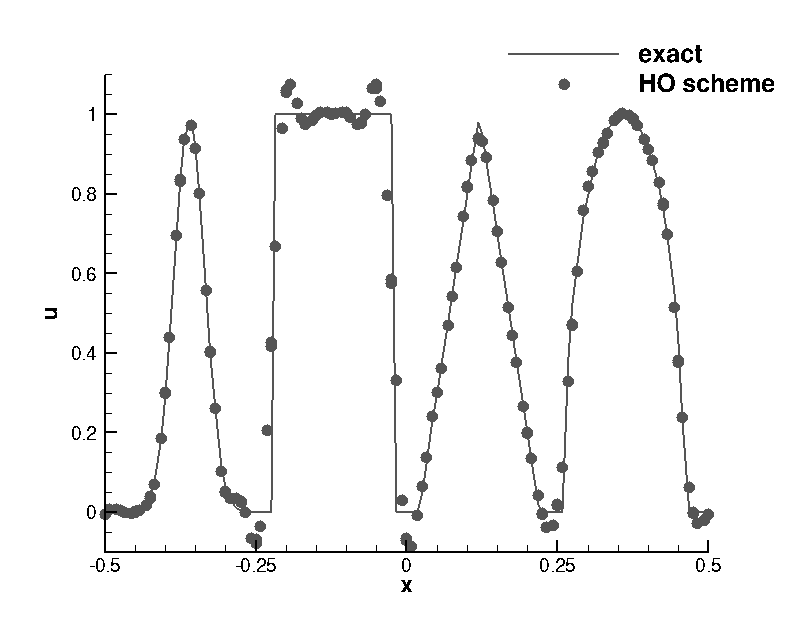}}
\subfloat[LO scheme]{\includegraphics[width=4.5cm]{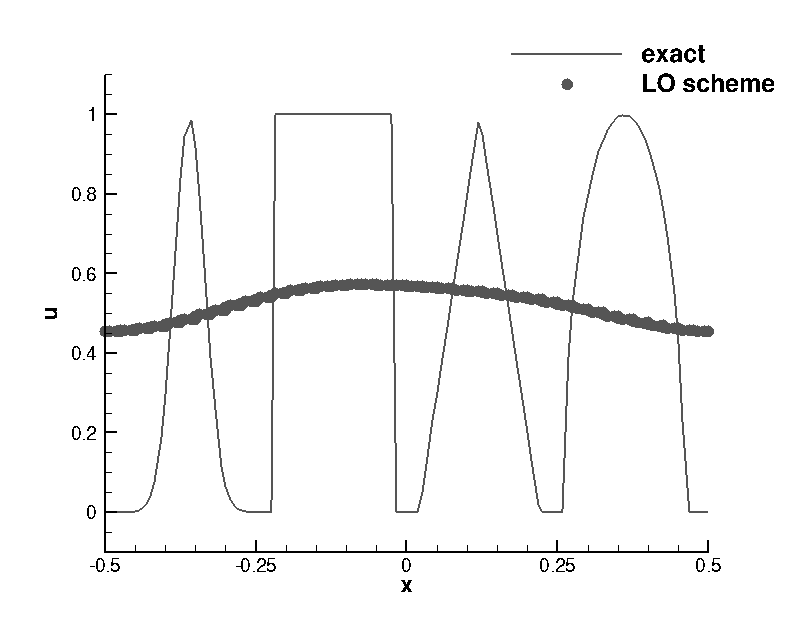}}
\subfloat[limited scheme]{\includegraphics[width=4.5cm]{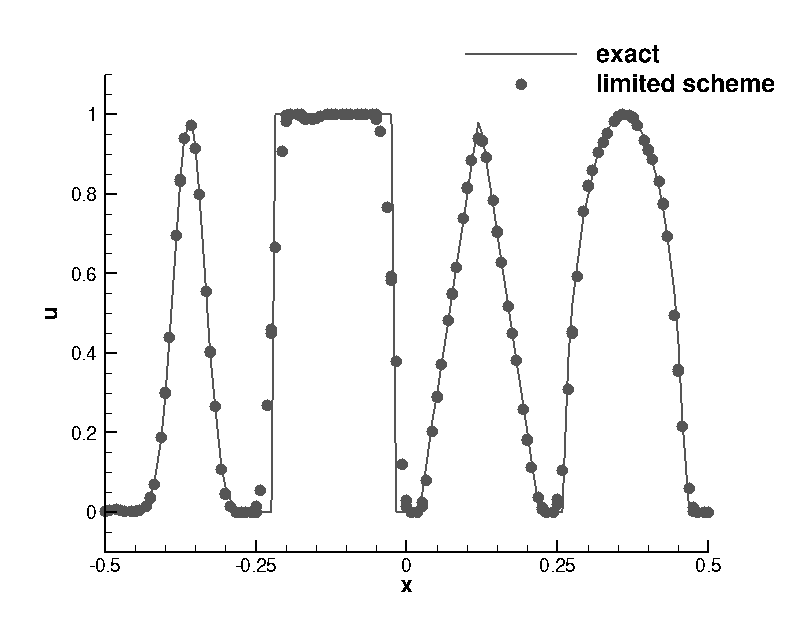}}
\caption{Linear transport with nonsmooth solution (see \cref{tab:scalar_pbs}): DGSEM($3,3$) solutions on a mesh with $N=40$ elements. The $p+1=4$ DOFs per mesh element are displayed at the final time $t=1$ (bullets) and compared to the exact solution (lines).}
\label{fig:lin1D}
\end{figure}

We now consider nonlinear problems in two space dimensions, described in \cref{tab:scalar_pbs}, that we simulate with a large CFL value on unstructured meshes with curved elements spanned by quadratic polynomials. We first consider the inviscid Burgers' equation with a discontinuous initial data \cite{LeVeque_FV_hyp,guermond_popov_IDP_CFE_scalar_17}, and then consider the KPP problem \cite{KPP_2007} defined by a nonconvex flux. The results are displayed in \cref{fig:burgers2D,fig:KPP}. where we again observe that the limiter effectively damps the spurious oscillations and maintains the maximum principle. Note that the HO scheme does not capture the physical solution for the KPP problem with a nonconvex flux, but the limited scheme successfully solves the problem.

\begin{figure}
\centering
\captionsetup[subfigure]{labelformat=empty}
\subfloat[HO scheme]{\includegraphics[width=4.5cm]{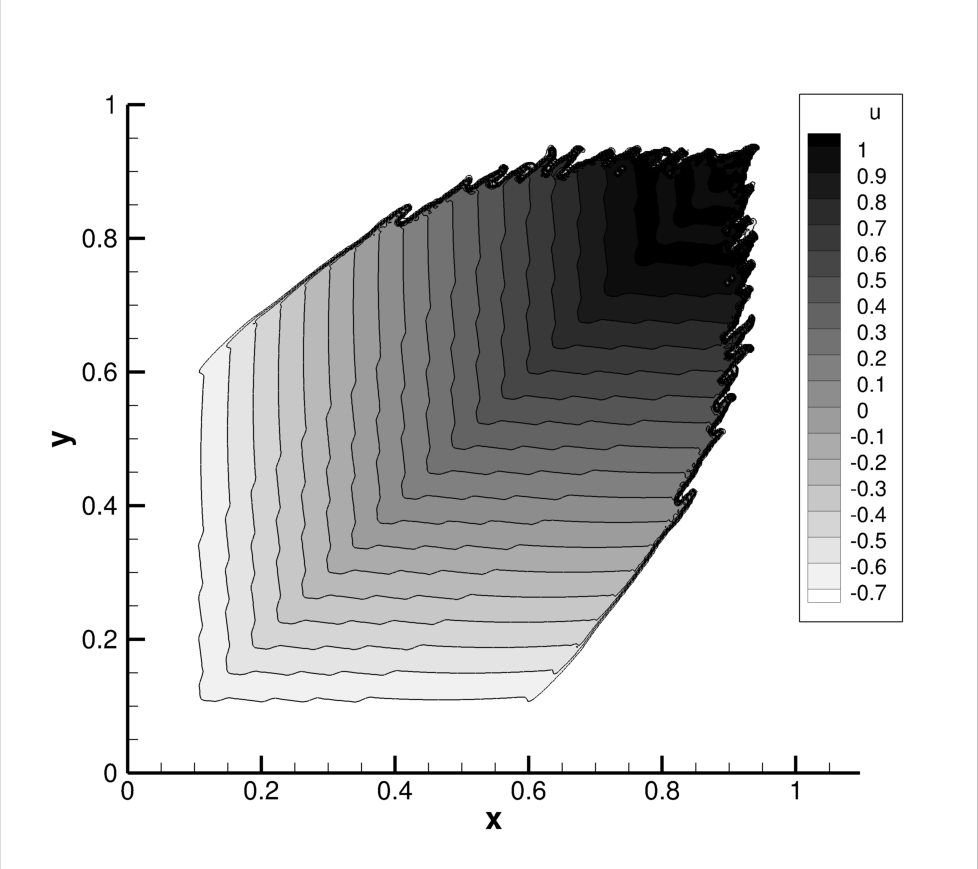}}
\subfloat[LO scheme]{\includegraphics[width=4.5cm]{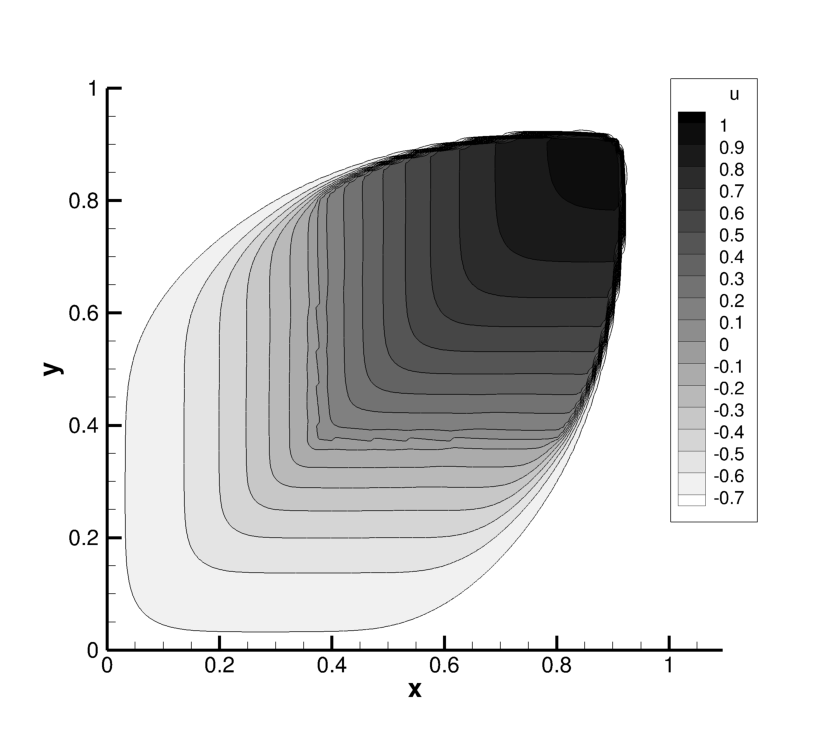}}
\subfloat[limited scheme]{\includegraphics[width=4.5cm]{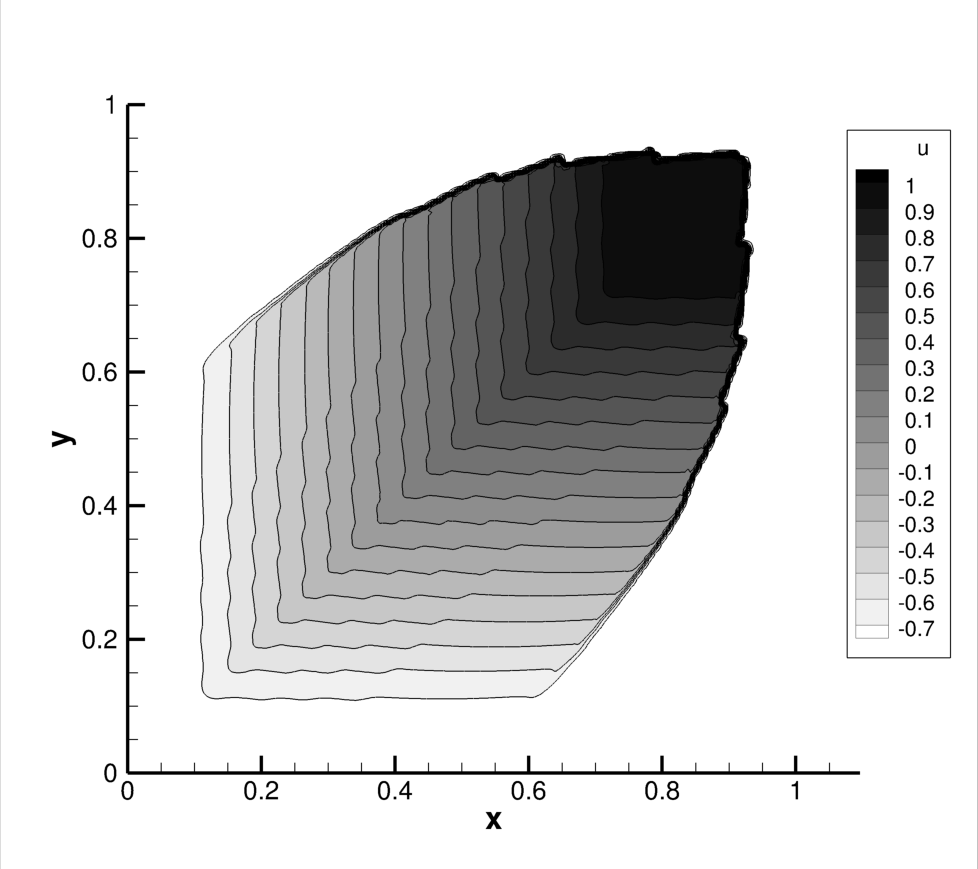}}
\caption{2D Burgers' equation (see \cref{tab:scalar_pbs}): solutions at time $t=0.375$. The DGSEM(3,3) solution is post-processed on a fine grid defined by subdividing every cell into four subcells.}
\label{fig:burgers2D}
\end{figure} 

\begin{figure}
\centering
\captionsetup[subfigure]{labelformat=empty}
\subfloat[HO scheme]{\includegraphics[width=4.5cm]{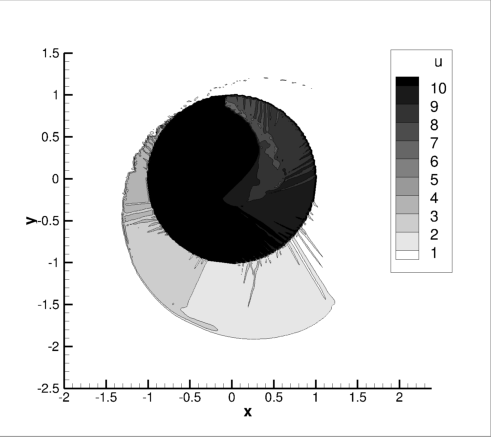}}
\subfloat[LO scheme]{\includegraphics[width=4.5cm]{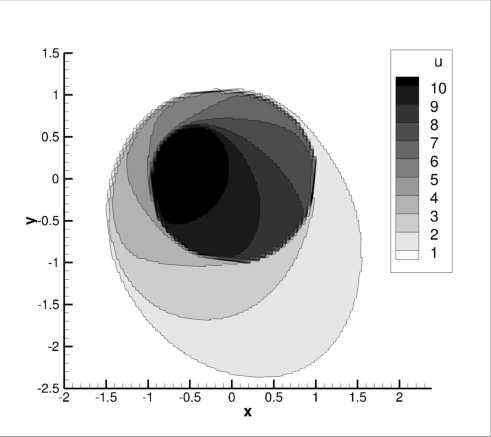}}
\subfloat[limited scheme]{\includegraphics[width=4.5cm]{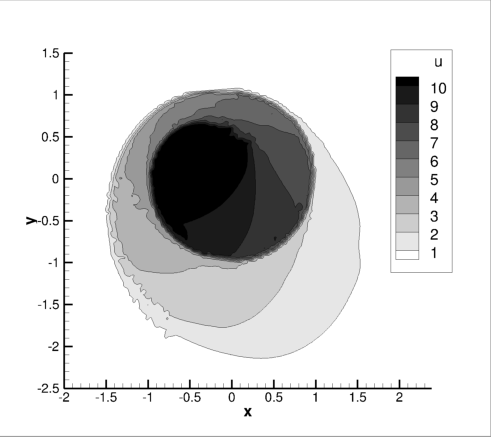}}
\caption{KPP problem (see \cref{tab:scalar_pbs}): solutions at time $t=1$. The DGSEM(3,3) solution is post-processed on a fine grid defined by subdividing every cell into four subcells.}
\label{fig:KPP}
\end{figure} 

%
%
\section{Concluding remarks}\label{sec:conclusion}

We propose and analyze an IDP limiter for HO discretizations of hyperbolic systems of conservation laws. The limiter generalizes the FCT limiter to systems of conservation laws by limiting the HO solution around a given IDP LO solution. It limits antidiffusive fluxes as in the FCT framework, but defines the limiting coefficients so as to express the limited solution as a convex combination of IDP quantities similarly to convex limiting. The limiter preserves conservation and can be applied to every conservative discretization and to a wide range of explicit and implicit time integration schemes. It can be applied iteratively to improve the accuracy of the limited solution, while preserving the invariant domains and keeping conservation. A heuristic is finally proposed to accelerate the convergence of the iterative limiter.

As an illustration, the limiter is applied to four different schemes: FV schemes with either ERK, or DIRK time stepping; DGSEM with DIRK time stepping; and to a space-time DGSEM. Implementation details are provided and we propose formulations of the antidiffusive fluxes for DG schemes that do not satisfy partition of unity. Numerical experiments on 1D and 2D scalar hyperbolic equations and the compressible Euler equations illustrate the stability and robustness of the limited solution and its ability to preserve the accuracy of the HO solution in smooth regions. Future work will consider the extension of the present limiter to linearized backward-Euler time stepping for the approximation of steady-state solutions of hyperbolic systems of conservation laws.
\bibliographystyle{elsarticle-num} 
\bibliography{biblio_generale}

\end{document}